\numberwithin{equation}{section}
\newtheorem{theorem}{Theorem}[section]
\newtheorem{def-prop}[theorem]{Definition-Proposition}
\newtheorem{proposition}[theorem]{Proposition}
\newtheorem{conjecture}[theorem]{Conjecture}
\newtheorem{problem}[theorem]{Problem}
\newcommand{\figone}{1}
\newcommand{\figtwo}{2}
\newcommand{\figthree}{3}
\newcounter{xrcs}[section]
\theoremstyle{definition}
\newtheorem{definition}[theorem]{Definition}
\newtheorem{exercise}[xrcs]{Exercise}
\newtheorem{remark}[theorem]{Remark}
\newtheorem{example}[theorem]{Example}
\newtheorem{question}[theorem]{Question}
\newcommand{\field}{K}
\newcommand{\Aff}[1]{\mathbb{A}^{#1}}
\newcommand{\Spec}{\operatorname{Spec}}
\newcommand{\pr}[1]{\mathbb{P}^{#1}}
\newcommand{\Proj}{\operatorname{Proj}}
\newcommand{\OO}{{\mathcal O}}
\newcommand{\mult}{\operatorname{mult}}
\begin{document}

\title[Regina Lectures]{Regina Lectures on Fat Points}

\author{Susan Cooper}
\address{Department of Mathematics\\
Central Michigan University\\
Mount Pleasant, Mich. 48859 USA}
\email{s.cooper@cmich.edu}

\author{Brian Harbourne}
\address{Department of Mathematics \\
University of Nebraska--Lincoln\\
Lincoln, NE 68588-0130, USA }
\email{bharbour@math.unl.edu}

\keywords{Fat points, polynomial rings, symbolic powers, primary decomposition, B\'ezout's Theorem, 
Hilbert functions, O-sequence, intersection multiplicity, Riemann-Roch,
divisors, blowings-up, Cremona group, quadratic transformation}
\subjclass[2000]{13-01, 13A02 , 13A15 , 13A17, 13D40 , 13F20 , 14-01, 14C20, 14N05, 14R05}

\date{October 8, 2013}

\begin{abstract}
These notes are a record of lectures given in the
Workshop on Connections Between Algebra and Geometry
at the University of Regina,
May 29--June 1, 2012. The lectures
were meant as an introduction to current research
problems related to fat points for an audience that was not expected
to have much background in commutative algebra or algebraic geometry
(although sections 8 and 9 of these notes demand somewhat more background
than earlier sections).
\end{abstract}

\thanks{
Acknowledgments: We thank the University of Regina for its thoughtful hospitality,
the (other) organizers for their work to make the Workshop a reality,
the funders for their financial support, and the participants for
their enthusiastic engagement at the Workshop.
We also are very grateful for the referee's detailed comments
and careful reading of these notes.}

\maketitle

\tableofcontents

\begin{quotation}
\begin{center}
\emph{We dedicate these notes to Tony Geramita, a wonderful mentor, colleague and friend:
the contagious joy he takes in life and in mathematics has been an inspiration for both of us.}
\end{center}
\end{quotation}

\section{Motivation}

Fat points are relevant to many areas of research.
For example, one reason
fat points are of interest in algebraic geometry is because of their connection to
linear systems: one can identify the homogeneous components 
of ideals of fat points in $\pr n$ with the spaces of 
global sections of line bundles on blowings-up of
$\pr n$ at given finite sets of points. Fat points also arise indirectly in other topics of study
in algebraic geometry, such as the study of secant varieties \cite{refCGG}.
In commutative algebra ideals of fat points give a useful class of test cases
and suggest interesting questions that can be true more generally
(see, for example, \cite{refMN}, where the authors give a conjecture
for all nonreduced zero-dimensional schemes, and as evidence 
prove it for fat points).
Fat points also arise in more applied situations, such as combinatorics and 
in interpolation problems \cite{refHS, refI}.
Regarding the latter, consider the following question. 

\begin{question}
What can we say about a function $f\in{\bf C}[x_1,\ldots,x_n]$ if 
all we know are values of $f$ and certain of
its partial derivatives at some finite set of
points $p_1,\ldots,p_s\in{\bf C}^n$? In particular:

\begin{enumerate}
\item What is the least degree among all $f$ satisfying the given data?
\item How many such $f$ are there up to some given degree $t$?
\item What is the smallest degree $t$ guaranteed to have
such an $f$, regardless of the choice of the points $p_i$? (For example,
there is a linear $f$ vanishing at three colinear points of the plane,
but not at three noncolinear points, so the least degree $t$ guaranteeing vanishing
at three points in the plane without knowing the disposition of the points is $t=2$.)
\end{enumerate}
\end{question}

These are open problems when $n\geq 2$ even in the simplest case, where we specify
points $p_1,\ldots,p_s$, and an order of vanishing $m_i$ at each point $p_i$, and ask
to find all $f\in{\bf C}[x_1,\ldots,x_n]$ such that $\operatorname{ord}_{p_i}(f)\geq m_i$
for all $i$, where, given a point $p$, 
$\operatorname{ord}_p(f)>0$ just means $f(p)=0$, 
$\operatorname{ord}_p(f)>1$ means $f(p)=0$ and $\frac{\partial f}{\partial x_i}(p)=0$ for all $i$, 
$\operatorname{ord}_p(f)>2$ means $f(p)=0$, $\frac{\partial f}{\partial x_i}(p)=0$ for all $i$, and 
$\frac{\partial^2 f}{\partial x_i\partial x_j}(p)(p)=0$ for all $i$ and $j$, and 
$\operatorname{ord}_p(f)>m$ just means $f(p)=0$ and 
$\frac{\partial^k f}{\partial x_{i_1}\cdots \partial x_{i_k}}(p)=0$
for all $i_j$ with $k\leq m$.

Alternatively, one can think of $\operatorname{ord}_p(f)$ as the least degree of 
a term of $f$ when expressed in coordinates centered at $p$. So for example,
if $p=(a_1,\ldots,a_n)$, then let $X_i=x_i-a_i$, and substitute $x_i=X_i+a_i$ into
$f$ to get $g=f(X_1+a_i,\ldots,X_n+a_n)$. Then $\operatorname{ord}_p(f)$ is the least degree of
a nonzero term of $g$, regarded as a polynomial in the $X_i$. This removes
having to deal with partial derivatives, which can be problematic 
when working over arbitrary algebraically closed fields, i.e., 
when considering $f\in \field[x_1,\ldots,x_n]$.

To further algebracize the interpolation problem, we note that $\operatorname{ord}_p(f)\geq m$
if and only if $f\in I(p)^m$, where $I(p)$ is the ideal of all polynomials
that vanish at $p$. Thus $\operatorname{ord}_{p_i}(f)\geq m_i$ for 
points $p_1,\ldots,p_s$ and orders of vanishing $m_i$ if and only if
$f$ is in the ideal $\cap I(p_i)^{m_i}$. It is convenient to use 0-cycle notation
to specify the given data, so we write $Z=m_1p_1+\cdots+m_sp_s$, which we refer to
as a fat point \emph{scheme}, and we denote $\cap I(p_i)^{m_i}$ by $I(Z)$.
(Readers familiar with schemes in the algebraic geometric sense can just
regard $Z$ as the subscheme of $\Aff{n}_{\field}$ defined by 
$I(Z)\subseteq \field[x_1,\ldots,x_n]$.)

Given $p_1\ldots,p_s\in\field^n$ and nonnegative integers
$m_1,\ldots,m_s$, we have the ideal $I=I(m_1p_1+\cdots+m_sp_s)\subseteq \field[x_1,\ldots,x_n]=A$.
Let $A_{\leq t}$ be the $\field$-vector space span of all $f\in A$ with $\deg(f)\leq t$,
and let $I_{\leq t}=I\cap A_{\leq t}$. Then we refer to the function 
$H^{\leq}_I(t)=\dim_{\field}(I_{\leq t})$
as the Hilbert function of $I$. 
Also, given any ideal $0\neq I\subseteq A$, define $\alpha(I)$ to be 
the degree of the nonzero element of $I$ of least degree.
(If $0\neq J\subseteq R=\field[x_0,\ldots,x_n]$ is a homogeneous ideal, then $\alpha(J)$
is in fact the degree of a nonzero homogeneous element of $J$ of least degree.)
We can now raise the following open problems:

\begin{problem} Consider the following problems.
\begin{enumerate}
\item Find $\alpha(I)$.
\item Find the Hilbert function $H^{\leq}_I$ of $I$.
\item Find the maximum value of $\alpha(I(m_1q_1+\cdots+m_sq_s))$
as the $q_i$ range over all choices of $s$ distinct points of $\Aff{n}$.
(The maximum occurs on a Zariski open subset of $(\Aff{n})^s$,
so we can ask: What is the maximum value of 
$\alpha(I(m_1q_1+\cdots+m_sq_s))$ for general points
$q_1,\ldots,q_s$?)
\end{enumerate}
\end{problem}

\begin{example}
All of the problems above are easy if $n=1$, using the fact that
$\field[x_1]$ is a principal ideal domain. This is the case of Lagrange interpolation.
In this case, a point $p_i$ is just an element of $\field$, so,
for example, $I=I(m_1p_1+\cdots+m_sp_s)=(x_1-p_1)^{m_1}\cdots(x_1-p_s)^{m_s}$.
The positions of the points $p_i$ do not matter: we always
have $\alpha(I)=m_1+\cdots+m_s$ and
$H^{\leq}_I(t)=\min(0,t+1-\sum_im_i)$.
\end{example}

We end this introduction with an advisory to the reader. It is common to refer to the
data $m_1p_1+\cdots+m_sp_s$ as being points $p_i$ with \emph{multiplicities} $m_i$.
This grows out of the universal terminology that a root of a polynomial in a single variable
can be a multiple root; for example, $x=1$ is a root of multiplicity 2 for $f(x)=x^2-2x+1$.
This terminology is quite old (see \cite{refSyl,refN2}, for example).
More recently, commutative algebraists have used multiplicity to refer to
what can also be called the degree of
a fat point subscheme. In this sense, the multiplicity of
$mp$ for a point $p\in\Aff{n}$ is $\binom{m+n-1}{n}$
(see \cite[p.\ 66]{refEH}, for example).
Regardless of priority, the term multiplicity has a multiplicity of well-established usage,
so one should check what usage any given author is employing.

\section{Affine space and projective space}

Let $\field$ be an algebraically closed field. For $n\geq0$, let $\Aff{n}$ denote $\field^n$, and let
$A=\field[\Aff{n}]$ denote $\field[X_1,\ldots, X_n]$. We refer to
$\Aff{n}$ as \emph{affine $n$-space}.
For any subset $S\subseteq \Aff{n}$, let $I(S)\subseteq A$ denote the ideal 
of all polynomials that vanish on $S$. (For those familiar with $\Spec$,
the \emph{affine scheme} associated to $S$ 
is $\Spec(A/I(S))$. Note that any ideal $I\subseteq A$ defines an affine subscheme
of $\Spec(A)$, and ideals $I$ and $J$ define the same affine subscheme
if and only if $I=J$.)

For $n\geq0$, let $\pr{n}$ denote equivalence classes of nonzero
$(n+1)$-tuples, where $(a_0,\ldots,a_n)$ and $(b_0,\ldots,b_n)$
are equivalent if there is a nonzero $t\in\field$ such that 
$(a_0,\ldots,a_n)=t(b_0,\ldots,b_n)$. Let $R=\field[\pr{n}]$ denote $\field[x_0,\ldots,x_n]$. 
We refer to $\pr{n}$ as \emph{projective $n$-space}.
For any subset $S\subseteq \pr{n}$, we obtain an associated homogeneous ideal 
(i.e., an ideal generated by homogeneous polynomials, also called forms)
$I(S)\subseteq R$, the ideal 
generated by all homogeneous polynomials that vanish on $S$, where we regard $R$ as being
a graded ring with each variable having degree 1 and constants having degree 0.
For those familiar with $\Proj$,
the \emph{projective scheme} associated to $S$ 
is $\Proj(R/I(S))$. If $M=(x_0,\ldots,x_n)$, any homogeneous ideal $I\subseteq M\subset R$ 
defines a subscheme $\Proj(R/I)\subseteq \Proj(R)=\pr n$, and homogeneous 
ideals $I\subseteq M$ and $J\subseteq M$ define the same subscheme
if and only if $I_t=J_t$ for $t\gg0$ (or equivalently, if and only if
$I\cap M^t=J\cap M^t$ for $t\gg0$), where $I_t$ and $J_t$ are the 
homogeneous components of the ideals of degree $t$. (Thus $I_t$ is the vector 
space span of the elements of $I$ of degree $t$. This applies in particular to $R$,
so $R_t$ is the $\field$-vector space span of the homogeneous polynomials in 
$R$ of degree $t$, and we have $I_t=R_t\cap I$.) Given a homogeneous ideal 
$I$, among all homogeneous ideals $J$ such that $I_t=J_t$ for $t\gg0$
there is a largest such ideal contained in $M$ which contains all of the others, called the 
\emph{saturation} of $I$, denoted $\operatorname{sat}(I)$. Thus given homogeneous 
ideals $I\subseteq M$ and $J\subseteq M$, we have $\Proj(R/I)=\Proj(R/J)$ if and only if
$\operatorname{sat}(I)=\operatorname{sat}(J)$. We say
an ideal is \emph{saturated} if it is equal to its saturation. Thus geometrically we are most interested in homogeneous ideals which are saturated,
since projective schemes are in bijective 
correspondence with the saturated homogeneous ideals. (Indeed,
readers uncomfortable with $\Proj$ can get by just thinking 
about saturated homogeneous ideals.)

We can regard $\Aff{n}\subset\pr{n}$ via the inclusion 
$(a_1,\ldots,a_n)\mapsto (1,a_1,\ldots,a_n)$.
We have an isomorphism of function fields 
$$\field(X_1,\ldots,X_n)=\field(\Aff{n})\cong\field(\pr{n})=\field(x_1/x_0,\ldots,x_n/x_0)$$
defined by $X_i\mapsto \frac{x_i}{x_0}$.

\begin{remark}
Some authors use $\Aff{n}$ to denote $\Spec(\field[x_1,\ldots,x_n])$.
Since we are assuming $\field$ is algebraically closed,
our usage is (by the Nullstellensatz) equivalent to taking $\Aff{n}$ to be the set of 
closed points (i.e., of points corresponding to maximal ideals)
of $\Spec(\field[x_1,\ldots,x_n])$. Likewise,
some authors use $\pr{n}$ to denote $\Proj(\field[x_0,\ldots,x_n])$.
In our definition, $\pr{n}$ denotes the set of closed points 
of $\Proj(\field[x_0,\ldots,x_n])$.
\end{remark}

As discussed in the previous section, 
we will denote the span of all polynomials of degree at most $t$ by $A_{\leq t}$.
Given an ideal $I\subseteq A$, let $I_{\leq t}$ denote $A_{\leq t}\cap I$, so
$I_{\leq t}$ is the subspace of $I$ spanned by all $f\in I$ of degree at most $t$.
Given an ideal $I\subseteq A$, the \emph{Hilbert function} of $I$ is the function $H^\leq_I$ where
$H^\leq_I(t)=\dim_\field (I_{\leq t})$; i.e., $H^\leq_I(t)$
is the $\field$-vector space dimension of the vector space spanned by all $f\in I$ with 
$\deg(f)\leq t$. The \emph{Hilbert function} of $A/I$ (or of the scheme $\Spec(A/I)$) is 
$H^\leq_{A/I}(t)=\dim_\field(A_{\leq t}/I_{\leq t})=\binom{n+t}{n}-H^\leq_I(t)$.
Given a homogeneous ideal $I\subseteq R$, the \emph{Hilbert function} $H_I$
of $I$ is the function $H_I(t)=\dim_\field (I_t)$; i.e., $H_I(t)$
is the $\field$-vector space dimension of the vector space 
spanned by all homogeneous $f\in I$ with 
$\deg(f)=t$. The \emph{Hilbert function} of $R/I$ (or of the scheme $\Proj(R/I)$) is 
$H_{R/I}(t)=\dim_\field(R_t/I_t)=\binom{t+n}{n}-H_I(t)$.

It is known that $H^\leq_{I}$ and $H^\leq_{A/I}$ become polynomials for $t\gg0$
(see Exercise \ref{dimbound} for an example). This polynomial 
is called the \emph{Hilbert polynomial} of $I$ or $A/I$ respectively. 
(We will see in the next section that the Hilbert polynomial for the ideal $I$ of the fat point 
subscheme $m_1p_1+\cdots+m_rp_r$
is $\binom{t+n}{n}-\sum_i\binom{m_i+n-1}{n}$.
Similarly, $\sum_i\binom{m_i+n-1}{n}$
is the Hilbert polynomial for $A/I$.)
Likewise, if $I\subseteq R$ is a homogeneous ideal, $H_I$ and $H_{R/I}$
become polynomials for $t\gg0$, called the \emph{Hilbert polynomial} of $I$ or $R/I$
as the case may be. Note that
$H^{\leq}_I(t)=H^{\leq}_A(t)-H^{\leq}_{A/I}(t)=\binom{t+n}{n}-H^{\leq}_{A/I}(t)$
for all $t\geq 0$. Using Exercise \ref{AffVProjBij} we also see that
$H_I(t)=H_R(t)-H_{R/I}(t)=\binom{t+n}{n}-H_{R/I}(t)$
for all $t\geq 0$. 

It is a significant and often difficult problem to determine the least value $i$ such that
the Hilbert polynomial and Hilbert function become equal for all $t\geq i$.
(For an ideal of fat points, this value is sometimes called the \emph{regularity index} of $I$,
and $i+1$ in the case of an ideal of fat points is known as the \emph{Castelnuovo-Mumford regularity} 
$\operatorname{reg}(I)$ of $I$.)

{\vskip\baselineskip\noindent\Large\bf Exercises}

\setcounter{theorem}{0}

\begin{exercise}\label{AffVProjBij} 
Show that there is a bijection between the set ${\mathcal M}_{\leq t}(A)$ of monomials of degree at 
most $t$ in $A=\field[x_1,\ldots,x_n]$ and the set ${\mathcal M}_t(R)$ of 
monomials of degree exactly $t$ in $R=\field[x_0,\ldots,x_n]$ for every $t\geq0$.
(This shows that $H^{\leq}_A(t)=H_R(t)$ for all $t\geq 0$.)
\end{exercise}

\begin{exercise}\label{exaffalphapower} 
If $0\neq I\subseteq A$ is an ideal, show that $\alpha(I^m)\leq m\alpha(I)$,
but if $0\neq J\subseteq R$ is homogeneous, then $\alpha(J^m)= m\alpha(J)$.
(See Exercise \ref{exaffalpha} for an example where equality in $\alpha(I^m)\leq m\alpha(I)$
fails.)
\end{exercise}

\begin{exercise}\label{sat} 
Let $I\subseteq M\subset R$ be a homogeneous ideal. Let $P$ be the ideal
generated by all homogeneous $f\in R$ such that $fM^i\subseteq I$ for some $i>0$. Show that 
$I\subseteq P$, that $P$ contains every homogeneous ideal $J\subseteq M$ such that 
$I_t=J_t$ for $t\gg0$, and that $I_t=P_t$ for $t\gg0$. Conclude that 
$P$ is the saturation of $I$ and that $P=\operatorname{sat}(P)$.
(In terms of colon ideals, $\operatorname{sat}(I)=\cup_{i\geq 1} I:M^i$.)
\end{exercise}

\section{Fat points in affine space}

A \emph{fat point} subscheme of affine $n$-space is the scheme corresponding to an ideal of the form
$I=\cap_{i=1}^rI(p_i)^{m_i}\subset A$ for a finite set of points $p_1,\ldots,p_r\in\Aff{n}$
and positive integers $m_i$. We denote $\Spec(A/I)$ in this case by
$m_1p_1+\cdots+m_rp_r$, and we denote the ideal $\cap_{i=1}^rI(p_i)^{m_i}$ by $I(m_1p_1+\cdots+m_rp_r)$.

Given distinct points $p_1,\ldots,p_r\in\Aff{n}$, let $I=\cap_{i=1}^rI(p_i)$;
following Waldschmidt \cite{refW} we define a constant we denote by $\gamma(I)$ as the following limit 
$$\gamma(I)=\lim_{m\to\infty}\frac{\alpha(\cap_{i=1}^r(I(p_i)^m))}{m}.$$
By Exercise \ref{exaffpower}, $\cap_{i=1}^r(I(p_i)^m)=I^m$, so 
$$\gamma(I)=\lim_{m\to\infty}\frac{\alpha(I^m)}{m},$$
but for a unified treatment, whether the points $p_i$ are in affine space or projective
space, it is better to take $$\gamma(I)=\lim_{m\to\infty}\frac{\alpha(\cap_{i=1}^r(I(p_i)^m))}{m}$$
as the definition of $\gamma(I)$.

We say the points $p_1,\ldots,p_r\in\Aff{n}$ are \emph{generic} points if the coordinates
of the points are algebraically independent over the prime field $\Pi_\field$ of $\field$.
(This is possible only if the transcendence degree of $\field$ over $\Pi_\field$
is at least $rn$.) The following problem is open for $n>1$ and $r\gg0$.

\begin{problem}\label{NagataConj}
Let $I$ be the ideal of $r$ generic points of $\Aff{n}$. Determine $\gamma(I)$. 
\end{problem}

There is a conjectural solution to the problem above, when $r\gg0$, due to 
Nagata \cite{refN} for $n=2$ and Iarrobino \cite{refI} for $n>2$:

\begin{conjecture}[Nagata/Iarrobino Conjecture]\label{NagIarroConj}
Let $I$ be the ideal of $r\gg0$ generic points of $\Aff{n}$. 
Then $\gamma(I)=\sqrt[n]{r}$ for $r\gg0$.
\end{conjecture}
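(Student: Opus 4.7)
The plan is to prove the equality by establishing the two inequalities $\gamma(I)\leq \sqrt[n]{r}$ and $\gamma(I)\geq \sqrt[n]{r}$ separately. The upper bound comes from a parameter count. For a single point $p\in\Aff{n}$, the quotient $A_{\leq t}/(I(p)^m\cap A_{\leq t})$ has dimension $\binom{m+n-1}{n}$ once $t\geq m-1$, and for $r$ generic points these vanishing conditions are linearly independent on $A_{\leq t}$, so
$$\dim_{\field}(I^m\cap A_{\leq t})\;\geq\;\binom{t+n}{n}-r\binom{m+n-1}{n}.$$
Whenever the right side is positive there is a nonzero $f\in I^m$ of degree at most $t$, and hence $\alpha(I^m)\leq t$. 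Expanding the binomial coefficients asymptotically, the right side is positive for some $t=m\sqrt[n]{r}+O(1)$ (with the implicit constant independent of $m$), so dividing by $m$ and taking $m\to\infty$ yields $\gamma(I)\leq \sqrt[n]{r}$.

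The reverse inequality $\gamma(I)\geq \sqrt[n]{r}$ is the conjectural content, and I would approach it by Nagata's specialization method. The key semicontinuity statement is that $\dim_{\field}(I(mZ)\cap A_{\leq t})$ can only increase when the configuration $Z=p_1+\cdots+p_r$ is specialized, so $\alpha(I(mZ))$ can only decrease. Consequently, any lower bound $\alpha(I(mZ_0))\geq m\sqrt[n]{r}$ at a single special configuration $Z_0$ forces the same bound at the generic $Z$. For $n=2$ and $r=s^2$ a perfect square, Nagata's classical choice of $Z_0$ is $r$ points on a smooth plane curve $C$ of degree $s$. Any curve $D$ of degree $t$ with $\mult_{p_i}(D)\geq m$ at each of the $s^2$ points contributes at least $s^2 m$ to the local intersection $D\cdot C$, while B\'ezout gives $D\cdot C=ts$ whenever $C$ and $D$ share no component, yielding $t\geq sm=m\sqrt{r}$; the remaining case $C\mid D$ is handled by peeling off a factor of $C$ and inducting on $m$. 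For $n\geq 3$ one would look for an analogous configuration of $r=s^n$ points on a suitable degree-$s$ hypersurface and bound $\alpha$ from below via intersection theory on a blow-up of $\pr n$.

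The main obstacle, and the reason Problem \ref{NagataConj} remains open, is twofold. First, for $r$ not a perfect $n$-th power the specialization to points on a hypersurface of the right degree is no longer numerically tight; naive interpolation between the sharp values at perfect powers loses a constant factor that the conjectured equality does not permit, so a genuinely different special configuration or a finer degeneration argument is needed to close the gap. Second, for $n\geq 3$ there is no clean substitute for the planar B\'ezout bound: blow-ups of $\pr n$ carry a much richer intersection theory but also admit many more effective divisor classes, and controlling which of these classes can support an $(m,\ldots,m)$-fat divisor appears to require either a Seshadri-constant-type input or a qualitatively new degeneration. A successful attack would almost certainly proceed through one of these two routes.
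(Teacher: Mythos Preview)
This statement is a \emph{conjecture}; the paper does not prove it, and it remains open. Your write-up correctly recognizes this: you establish the easy inequality $\gamma(I)\leq\sqrt[n]{r}$, sketch the known perfect-power case of the reverse inequality, and then explain why the remaining cases resist attack. That is the right shape for a response to an open problem rather than a proof, and it is consistent with what the paper itself does (Proposition~\ref{valsofgammaProp}, Remark~\ref{valuesofgamma}, and the surrounding exercises).

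Two remarks on the details. First, your claim that for generic points the vanishing conditions are ``linearly independent on $A_{\leq t}$'' is not true in general---whether it holds is essentially the content of the SHGH Conjecture---and is in any case unnecessary: the inequality $\dim_{\field}(I^m\cap A_{\leq t})\geq\binom{t+n}{n}-r\binom{m+n-1}{n}$ holds for arbitrary points simply because imposing that many linear conditions cannot cut the dimension by more. This is exactly how the paper proceeds in Exercise~\ref{NagataBound}. Second, your B\'ezout peeling argument for $n=2$, $r=s^2$ via specialization to $s^2$ smooth points on a degree-$s$ curve is correct and does establish $\gamma(I)\geq s$; the paper uses precisely this style of argument in Exercise~\ref{9generalpts} for $r=9$. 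Note, however, that Nagata's original work proves the sharper statement $\alpha(I^{(m)})>m\sqrt{r}$ for all $m$ (not merely the limit $\gamma(I)=\sqrt{r}$), which requires a more delicate degeneration than a single smooth curve; and for $n\geq3$ the known perfect-power case (Evain, cited in Remark~\ref{valuesofgamma}) does not proceed via a hypersurface-plus-B\'ezout argument as you suggest, so that part of your sketch is speculative.
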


\begin{remark}\label{valuesofgamma}
The value of $\gamma(I)$ is known for $r$ generic points of $\Aff{2}$ for $1\leq r\leq 9$ (see for example
\cite[Appendix 1]{refCh} and \cite[Theorem 7]{refN2}) or when $r$ is a square \cite{refN}.
In particular, $\gamma(I)=1$ if $r=1,2$, while $\gamma(I)=3/2$ if $r=3$,
$\gamma(I)=2$ if $r=4,5$, $\gamma(I)=12/5$ if $r=6$, 
$\gamma(I)=21/8$ if $r=7$,
$\gamma(I)=48/17$ if $r=8$, and
$\gamma(I)=\sqrt{r}$ if $r\geq 9$ is a square.
Moreover, when $n>2$ and $\sqrt[n]{r}$ is an integer,
then again $\gamma(I)=\sqrt[n]{r}$ (see \cite[Theorem 6]{refEvain}).
\end{remark}

We will for now just verify that the values given
in Remark \ref{valuesofgamma} are upper bounds.
By Exercise \ref{affstarsandbars}, the Hilbert polynomial of the ideal of
a fat point subscheme $m_1p_1+\cdots+m_rp_r\subset \Aff{n}$ 
is $\binom{t+n}{n}-\sum_i\binom{m_i+n-1}{n}$,
and so $\sum_i\binom{m_i+n-1}{n}$
is the \emph{Hilbert polynomial} for $A/I$ or equivalently for 
the scheme $m_1p_1+\cdots+m_rp_r$.

\begin{proposition}\label{valsofgammaProp}
Consider the ideal $I$ of $r$ distinct points of $\Aff{n}$.
Then $\gamma(I)\leq\sqrt[n]{r}$. Moreover, when $n=2$, we have:
$\gamma(I)=1$ if $r=1,2$; $\gamma(I)\leq 3/2$ if $r=3$;
$\gamma(I)\leq 2$ if $r=4,5$; $\gamma(I)\leq12/5$ if $r=6$; 
$\gamma(I)\leq21/8$ if $r=7$; and
$\gamma(I)\leq48/17$ if $r=8$.
\end{proposition}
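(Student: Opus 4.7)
The plan is to combine two ingredients: a subadditivity of $\alpha(\cap_i I(p_i)^m)$ in $m$, which converts the limit defining $\gamma(I)$ into an infimum so that a single value of $m$ can certify a bound, and a vector-space dimension count that produces an explicit polynomial of small degree in $\cap_i I(p_i)^m$.

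For the subadditivity, I would observe that if $f$ has order of vanishing at least $m$ at each $p_i$ and $g$ has order at least $k$ at each $p_i$, then $fg$ has order at least $m+k$ at each $p_i$; thus
$$\alpha\bigl(\cap_i I(p_i)^{m+k}\bigr)\leq \alpha\bigl(\cap_i I(p_i)^m\bigr)+\alpha\bigl(\cap_i I(p_i)^k\bigr),$$
and Fekete's lemma then gives $\gamma(I)=\inf_m \alpha(\cap_i I(p_i)^m)/m$, so $\gamma(I)\leq \alpha(\cap_i I(p_i)^m)/m$ for every $m\geq 1$. For the dimension count, recall from the discussion preceding the proposition that $\dim_{\field} A/I(p_i)^m=\binom{m+n-1}{n}$, so the codimension of $(I(p_i)^m)_{\leq d}$ in $A_{\leq d}$ is at most $\binom{m+n-1}{n}$. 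Summing over the $r$ points,
$$\dim_{\field}\bigl(\cap_i I(p_i)^m\bigr)_{\leq d}\geq \binom{d+n}{n}-r\binom{m+n-1}{n},$$
so whenever $\binom{d+n}{n}>r\binom{m+n-1}{n}$ we obtain a nonzero $f\in \cap_i I(p_i)^m$ with $\deg f\leq d$, and hence $\alpha(\cap_i I(p_i)^m)\leq d$.

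The general bound $\gamma(I)\leq \sqrt[n]{r}$ then drops out: for each $m$ let $d_m$ be the least integer with $\binom{d_m+n}{n}>r\binom{m+n-1}{n}$; since both binomials are polynomials of degree $n$ in their arguments with leading coefficient $1/n!$, we have $d_m/m\to \sqrt[n]{r}$, and $\gamma(I)\leq \lim d_m/m=\sqrt[n]{r}$. For the $n=2$ table I would verify one sharp pair $(d,m)$ for each $r$ via the same parameter count: $(1,1)$ for $r=1,2$; $(3,2)$ for $r=3$ (since $10>9$); $(2,1)$ for $r=4,5$ (since $6>r$); $(12,5)$ for $r=6$ (since $91>90$); $(21,8)$ for $r=7$ (since $253>252$); and $(48,17)$ for $r=8$ (since $1225>1224$). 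Each such pair, together with $\gamma(I)\leq \alpha(\cap_i I(p_i)^m)/m$, yields the listed upper bound.

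There is no serious obstacle; the whole argument reduces to an elementary dimension count plus Fekete. The main subtlety worth flagging is that the pairs $(d,m)$ above are \emph{sharp} for the count: the gap $\binom{d+2}{2}-r\binom{m+1}{2}$ equals $1$ in each of the nontrivial cases, so one must select exactly these denominators $m$ (rather than any large $m$) to recover the precise upper bounds of Remark \ref{valuesofgamma}.
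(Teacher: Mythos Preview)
Your argument is correct and in fact cleaner than the paper's for the cases $r=3,6,7,8$. Both proofs rest on the same two ingredients---the inequality $\gamma(I)\leq \alpha(\cap_i I(p_i)^m)/m$ for every $m$ (the paper derives this in Exercise~\ref{exWaldschmidt}, you via Fekete), and the codimension bound $\dim(\cap_i I(p_i)^m)_{\leq d}\geq \binom{d+n}{n}-r\binom{m+n-1}{n}$---but they diverge in how the small $r$ cases are handled. You run the dimension count directly at the critical pair $(d,m)$ for each $r$, which is uniform and painless; the paper instead builds an explicit element of $\cap_i I(p_i)^m$ by hand (a product of lines for $r=3$, of conics for $r=6$, of nodal cubics for $r=7$, of singular sextics for $r=8$). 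Your route is shorter for the bare inequality; the paper's constructions have the compensating virtue that the explicit curves reappear later in Section~\ref{BezoutSection} and Section~\ref{Weyl}, where B\'ezout-type arguments against those very curves establish the matching \emph{lower} bounds on $\gamma(I)$ for generic points.

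One small gap: the statement asserts $\gamma(I)=1$ (not merely $\leq 1$) when $r=1,2$, and your proposal only supplies the upper bound. The missing half is easy---since $\cap_i I(p_i)^m\subseteq I(p_1)^m$ and $\alpha(I(p_1)^m)=m$, one has $\gamma(I)\geq 1$ for any nonempty set of points---but it should be said.
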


\begin{proof}
For $\gamma(I)\leq\sqrt[n]{r}$, see Exercise \ref{NagataBound}. Now let $n=2$.
Say $r=1$. Then by Exercise \ref{dimbound1pt}, $H^\leq_{I^m}(t)=0$ for $t<m$ (so $\alpha(I^m)\geq m$)
and clearly $I^m$ has elements of degree $m$ (so $\alpha(I^m)\leq m$), hence
$\alpha(I^m)=m$. Thus $\gamma(I)=1$ by definition.

Now let $r=2$; let $p_1$ and $p_2$ be the $r=2$ points. Then $I^m\subseteq I(p_1)^m$,
so $\alpha(I(p_1)^m)\leq \alpha(I^m)$, hence
$1=\gamma(I(p_1))\leq \gamma(I^m)$, but again $I^m$ clearly has elements of degree $m$
(take the $m$th power of the linear polynomial defining the line through $p_1$ and $p_2$),
so $\alpha(I^m)\leq m$, hence $\gamma(I)\leq 1$ so we have $\gamma(I)=1$.

Now let $r=3$. If the points $p_1,p_2,p_3$ 
are colinear, then as for two points we have $\gamma(I)=1$. Otherwise, consider the cubic polynomial $L_{12}L_{13}L_{23}$ defining the union 
of the three lines $L_{ij}$ through pairs $\{p_i,p_j\}$ of the $r=3$ points.
But $L_{ij}\in I(p_i)\cap I(p_j)$ and $I(p_i)\cap I(p_j)=I(p_i)I(p_j)$ by Exercise
\ref{exaffpower}, so $L_{12}L_{13}L_{23}\in 
(I(p_1)I(p_2))(I(p_1)I(p_3))(I(p_2)I(p_3))=(I(p_1)I(p_2)I(p_3))^2$,
which (again by Exercise \ref{exaffpower}) is $I^2$.
Thus $L_{12}L_{13}L_{23}$ is in $I^2$ and has degree 3, so Exercise \ref{exWaldschmidt}(c) 
shows that $\gamma(I)\leq \alpha(I^2)/2 \leq 3/2$.

For $r=4$, it's easy to see that $\alpha(I)\leq 2$, so $\gamma(I)\leq \alpha(I)/1 \leq 2$.

For $r=5$, $H^\leq_I(2)\geq \binom{2+2}{2}-5\binom{1+2-1}{2}=1$, so
$\alpha(I)\leq 2$ and $\gamma(I)\leq \alpha(I)/1 \leq 2$.

For $r=6$, through every subset of 5 of the 6 points there is (as we just saw) a conic,
hence $I^5$ contains a nonzero polynomial of degree 12 (coming from the
conics through the 6 subsets of 5 of the 6 points), so 
$\alpha(I^5)\leq 12$ and $\gamma(I)\leq \alpha(I^5)/5 \leq 12/5$.

For $r=7$, there is a cubic which has a point of multiplicity at least 2 at any one of the points
and multiplicity at least 1 at the other 6 points, since
$H^\leq_I(3)\geq \binom{3+2}{2}-\binom{2+2-1}{2}-6\binom{1+2-1}{2}=1$.
Multiplying together the seven cubics (one having a point of multiplicity at least 2 at the first point, the next
having a point of multiplicity 2 at the second point, etc.) gives a polynomial
of degree 21 having multiplicity at least 8 at each of the points,
so $\gamma(I)\leq \alpha(I^8)/8 \leq 21/8$.

For $r=8$, there is a sextic which has a point of multiplicity at least 3 at any one of the points
and multiplicity at least 2 at the other 7 points, since
$H^\leq_I(6)\geq \binom{6+2}{2}-\binom{3+2-1}{2}-7\binom{2+2-1}{2}=1$.
Multiplying together the eight sextics gives a polynomial
of degree 48 having multiplicity at least 17 at each of the points,
so $\gamma(I)\leq \alpha(I^{17})/17 \leq 48/17$.
\end{proof}

We will see in Section \ref{BezoutSection} and its exercises 
and Section \ref{Weyl}
why equality holds above for $r<9$ when $n=2$ if the points are 
sufficiently general.

{\vskip\baselineskip\noindent\Large\bf Exercises}

\setcounter{theorem}{0}

\begin{exercise}\label{exaffpower} Let $p_1,\ldots,p_r$ be distinct points of $\Aff{n}$.
Show that $\cap_{i=1}^rI(p_i)^{m_i}=I(p_1)^{m_1}\cdots I(p_r)^{m_r}$.
\end{exercise}

\begin{exercise}\label{exWaldschmidt}[Waldschmidt's constant, \cite{refW,refW2}] 
Let $p_1,\ldots,p_r$ be distinct points of $\Aff{n}$
and let $I=\cap_{i=1}^rI(p_i)$. Let $b$ and $c$ be positive integers.
\begin{description}
\item{(a)} Show that 
$$\frac{\alpha(I^{bc})}{bc}\leq \frac{\alpha(I^{b})}{b}.$$
\item{(b)} Show that 
$$\lim_{m\to\infty}\frac{\alpha(I^{m!})}{m!}$$
exists.
\item{(c)} Show that 
$$\lim_{m\to\infty}\frac{\alpha(I^{m})}{m}$$
exists, is equal to the limit given in (b) and satisfies
$$\lim_{m\to\infty}\frac{\alpha(I^{m})}{m}\leq \frac{\alpha(I^t)}{t}$$
for all $t\geq 1$.
\end{description}
\end{exercise}

\begin{exercise}\label{affstarsandbars}
Show that the $\field$-vector space dimension of $A_{\leq t}$ is
$\dim_\field(A_{\leq t})=\binom{t+n}{n}$.
\end{exercise}

\begin{exercise}\label{starsandbars}
Show that there are $\binom{t+n}{n}$ monomials of degree $t$ in $n+1$ variables.
\end{exercise}

\begin{exercise}\label{dimbound1pt}
Let $I$ be the ideal of the point $p=(a_1,\ldots,a_n)\in\Aff{n}$. Show that 
$H^\leq_{I^m}(t)\geq\binom{t+n}{n}-\binom{m+n-1}{n}$,
with equality for $t\geq m-1$.
\end{exercise}

\begin{exercise}\label{exaffalpha} Let $p_1,p_2,p_3$ be distinct noncolinear points of $\Aff{2}$.
If $I=I(p_1)\cap I(p_2)$, show that $\alpha(I^m)=m\alpha(I)$.
If $J=I(p_1)\cap I(p_2)\cap I(p_3)$ and $m>1$, show that 
$\alpha(J^m)<m\alpha(J)$.
\end{exercise}

\begin{exercise}\label{nondecrAff}
Let $I\subseteq A$ be an ideal.
Show that $H^{\leq}_{A/I}$ is nondecreasing.
\end{exercise}

The following exercise is a version of the Chinese Remainder Theorem.

\begin{exercise}\label{dimbound}
Let $I$ be the ideal of $m_1p_1+\cdots+m_rp_r$ for $r$ distinct points $p_i\in\Aff{n}$. 
Show that $H^\leq_{I}(t)\geq\binom{t+n}{n}-\sum_i\binom{m_i+n-1}{n}$,
with equality if $t\gg0$.
\end{exercise}

\begin{exercise}\label{NagataBound}
Let $I$ be the ideal of $r$ distinct points of $\Aff{n}$. Show that $\gamma(I)\leq \sqrt[n]{r}$. 
If $1\leq r\leq n$, show that $\gamma(I)=1$.
\end{exercise}

\begin{exercise}\label{NagConj}
If $s\geq9$ and $n=2$, 
show that $\inf\{\frac{t}{m}: \binom{t+n}{n}-s\binom{m+n-1}{n}>0; m,t\geq1\}=\sqrt[n]{s}$.
(The same fact is true for $n>2$ with $s\gg0$ replacing $s\ge 9$.
This is part of the motivation for the Conjecture \ref{NagIarroConj}.)
\end{exercise}

\begin{exercise}\label{units}
Let $p\in\Aff{n}$ and let $m>0$. Show that every element $\overline{f}\in A/(I(p))^m$ is the image of
a polynomial $f\in A$ of degree at most $m-1$, and that $\overline{f}$ is a unit if and only if $f(p)\neq0$.
\end{exercise}

\begin{exercise}\label{AffIsIrr}
For any nonzero element $f\in\field[\Aff{n}]$, show there exists a point $p\in\Aff{n}$ such that
$f(p)\neq0$.
\end{exercise}

\begin{exercise}\label{pointsavoidance}
Let $n\geq 1$ and let $p_1,\ldots,p_r$ be distinct points of $\Aff{n}$. Show that there is 
a linear form $f\in\field[\Aff{n}]$ such that $f(p_i)\neq f(p_j)$ whenever $p_i\neq p_j$.
\end{exercise}

Here is a more explicit version of Exercise \ref{dimbound}, one solution
of which applies Exercises \ref{units}, \ref{AffIsIrr} and \ref{pointsavoidance}. 

\begin{exercise}\label{dimbound2}
Let $I$ be the ideal of $m_1p_1+\cdots+m_rp_r$ for $r$ distinct points $p_i\in\Aff{n}$. 
Show that $H^\leq_{I}(t)=\binom{t+n}{n}-\sum_i\binom{m_i+n-1}{n}$ if $t\geq m_1+\cdots+m_r-1$.
If the points are colinear, show that 
$H^\leq_{I}(t)>\binom{t+n}{n}-\sum_i\binom{m_i+n-1}{n}$ if $t<m_1+\cdots+m_r-1$.
\end{exercise}

\section{Fat points in projective space}

A \emph{fat point} subscheme of projective $n$-space is the scheme corresponding to an ideal of the form
$I=\cap_{i=1}^rI(p_i)^{m_i}\subset R$ for a finite set of distinct points $p_1,\ldots,p_r\in\pr{n}$
and positive integers $m_i$. We again denote the subscheme defined by $I$ by
$m_1p_1+\cdots+m_rp_r$ (in this case the subscheme is $\Proj(R/I)$), and 
we denote the ideal $\cap_{i=1}^rI(p_i)^{m_i}$ by $I(m_1p_1+\cdots+m_rp_r)$.

\begin{remark}
If $p_1,\ldots,p_r\subset \Aff{n}\subset\pr{n}$, then there is no ambiguity in the notation
$m_1p_1+\cdots+m_rp_r$, since there is a canonical isomorphism
from $m_1p_1+\cdots+m_rp_r$ regarded as a subscheme of $\Aff{n}$ and
$m_1p_1+\cdots+m_rp_r$ regarded as a subscheme of $\pr{n}$.
However, there is ambiguity in the notation $I(m_1p_1+\cdots+m_rp_r)$,
so we will sometimes use $I_A(m_1p_1+\cdots+m_rp_r)$ to denote the ideal in $A$
and $I_R(m_1p_1+\cdots+m_rp_r)$ to denote the homogeneous ideal in $R$
of $m_1p_1+\cdots+m_rp_r$. 
\end{remark}

\begin{remark}\label{sympowerRem}
If $I_R=\cap_{i=1}^rI_R(p_i)$, it can sometimes happen that 
$I_R^m=\cap_{i=1}^r(I_R(p_i)^m)$, but 
$I_R(p_1)^{m_1}\cdots I_R(p_r)^{m_r}= \cap_{i=1}^rI_R(p_i)^{m_i}$
essentially never happens (see Exercise \ref{homogprod}), and
in general the most one can say about $I^m_R$ is that
$I_R^m\subseteq \cap_{i=1}^r(I_R(p_i)^m)$.
Thus, we define the $m$th \emph{symbolic} power $I_R^{(m)}$ of $I_R=\cap_{i=1}^rI_R(p_i)$
to be $I_R^{(m)}=\cap_{i=1}^r(I_R(p_i)^m)$.
One can see the difference between the ideals
$I_R(p_1)^{m_1}\cdots I_R(p_r)^{m_r}$ and $\cap_{i=1}^rI_R(p_i)^{m_i}$
and between $I_R^m$ and $I_R^{(m)}$ by looking at primary decompositions.
The intersection $\cap_{i=1}^r(I_R(p_i)^m)$ is the primary decomposition
of $I_R^{(m)}$, but $I_R^m$ has a primary decomposition of the form
$I_R^{(m)}\cap J$ where $J$ is $M$-primary (possibly $J=M$, in which case
we have $I_R^m=I_R^{(m)}\cap M=I_R^{(m)}$), $M$ being the irrelevant ideal
(the ideal generated by the coordinate variables in $\field[\pr n]$).
Similarly, the primary decomposition of
$I_R(p_1)^{m_1}\cdots I_R(p_r)^{m_r}$ also has the form 
$I_R^{(m)}\cap J$ where $J$ is $M$-primary.
In any case, we see that $I_R^m\subseteq I_R^{(m)}$ for all $m\geq 1$.
We also see that $(I_R^m)_t=(I_R^{(m)})_t$ for $t\gg0$,
since for large $t$, any $M$-primary ideal $J$ contains $M^t$ and thus has $J_t=M_t$.
\end{remark}

By Exercise \ref{easyContainment}, we have $I^r \subseteq I^{(m)}$ if and only if
$r\geq m$. However, it is a hard problem to determine for which $m$ 
and $r$ we have $I^{(m)}\subseteq I^r$.
See for example \cite{refELS,refHH,refCHT, refHaHu} and the references therein.

\begin{problem}\label{NotOpenProb}
Let $p_1,\ldots,p_s\in \pr n$ be distinct points. Let $I=I_R(p_1+\cdots+p_s)$.
Is it true that $I^{(ns-n+1)}\subseteq I^s$ for all $s\geq 1$?
In particular, is it true that $I^{(3)}\subseteq I^2$ always holds when $n=2$?
\end{problem}

\begin{remark}\label{notopenrem}
Problem \ref{NotOpenProb} was open when the course these notes are based on was given in
2012. The situation changed shortly thereafter. An example with $I^{(3)}\not\subseteq I^2$
was posted
to the arXiv early in 2013 \cite{refDST}. This inspired another example
\cite[Remark 3.11]{refBCH};
see also \cite{refHaSe} for further discussion. Thus the problem now seems to be to 
classify the configurations of points in the plane for which we have
$I^{(3)}\not\subseteq I^2$. So far, they seem to be quite rare.
\end{remark}

Let $\delta_t:R_t\to A_{\leq t}$ be the map defined for any $F\in R_t$
by $\delta_t(F)=F(1,X_1,\ldots,X_n)$
and let $\eta_t:A_{\leq t}\to R_t$ be the map $\eta_t(f)=x_0^tf(x_1/x_0,\ldots,x_n/x_0)$.
Note that these are $\field$-linear maps, each being the inverse of the other.
In particular, $\dim(R_t)=\dim(A_{\leq t})=\binom{t+n}{n}$.

If $p\in\Aff{n}\subset\pr{n}$, so $p=(a_1,\ldots,a_n)\in \Aff n$ and can be 
represented in projective coordinates by $p=(1,a_1,\ldots,a_n)\in\pr n$,
let $I=(X_1-a_1,\ldots,X_n-a_n)$ be the ideal of $p$ in $A$
and let $J=(x_1-a_1x_0,\ldots, x_n-a_nx_0)$ be the ideal of $p\in \pr n$
in $R$. Then $\eta_t((I^m)_{\leq t})\subseteq (J^m)_t$ and
$\delta_t((J^m)_t)\subseteq (I^m)_{\leq t}$, so 
we have $\field$-linear vector space isomorphisms
$(I^m)_{\leq t}\to(J^m)_t$ given by $\eta_t$, hence $H^\leq_{I^m}(t)=H_{J^m}(t)$
and $H^\leq_{A/I^m}(t)=H_{R/J^m}(t)$ for all $t$.
Similarly, if $p_1,\ldots,p_r\in \Aff{n}\subset\pr n$, and if
$I=I_A(m_1p_1+\cdots+m_rp_r)\subset A$ and 
$J=I_R(m_1p_1+\cdots+m_rp_r)\subset R$, then again
we have $\field$-linear isomorphisms
$I_{\leq t}\to J_t$ given by $\eta_t$, hence  
\begin{equation}\label{affprofeq}
H^\leq_I(t)=H_J(t) \text{ and } H^\leq_{A/I}(t)=H_{R/J}(t)
\end{equation}
for all $t$.
Hence the Hilbert functions and Hilbert polynomials
for $m_1p_1+\cdots+m_rp_r$ are the same whether we regard them 
as affine or projective subschemes.
In particular, if $p_1,\ldots,p_r\subset \Aff{n}\subset\pr{n}$
and if $I_A=I_A(p_1+\cdots+p_r)$ and $I_R=I_R(p_1+\cdots+p_r)$,
then $\alpha(I_R^{(m)})=\alpha(I_A^m)$ for all $m\geq 1$ and
$\gamma(I_A)=\gamma(I_R)$. By Exercise \ref{dimbound}, we also have
$H_{I_R}(t)\geq \binom{t+n}{n}-\sum_i\binom{m_i+n-1}{n}$ and hence clearly
$$H_{I_R}(t)\geq \max\Big\{\binom{t+n}{n}-\sum_i\binom{m_i+n-1}{n},0\Big\}.$$
This is an equality for $t\gg0$. 
There is a conjecture, known as the SHGH Conjecture, that gives a conjectural value
for $H_{I_R}(t)$ when $n=2$ and the points $p_i$ are generic.
Here is a simple to state special case of the SHGH Conjecture, named
for various people who published
what turns out to be equivalent conjectures:
B.\ Segre \cite{refSe} in 1961, B.\ Harbourne \cite{refVanc} in 1986, 
A.\ Gimigliano \cite{refG} in 1987 (also see \cite{refG2}) and A.\ Hirschowitz \cite{refHi} in 1989.

\begin{conjecture}[SHGH Conjecture (special case)]\label{SHGHconjSpecCase}
Given $r\geq9$ generic points $p_i\in\pr2$ and any nonnegative integers $m$ and $t$,
let $I=I_R(m(p_1+\cdots+p_r))$. Then
$$H_{I}(t)=\max\Big\{\binom{t+2}{2}-r\binom{m+1}{2},0\Big\}.$$
\end{conjecture}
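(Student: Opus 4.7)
The lower bound $H_I(t) \geq \max\{\binom{t+2}{2} - r\binom{m+1}{2}, 0\}$ is immediate: nonnegativity is trivial, while the other inequality follows from Exercise \ref{dimbound} applied with $n=2$ and $m_i=m$ for all $i$, together with the identification of affine and projective Hilbert functions in \eqnref{affprofeq}. Thus the content lies entirely in the upper bound; equivalently, one must show that the $r$ fat point conditions are as independent as possible, and in particular that $I_t=0$ whenever $\binom{t+2}{2}<r\binom{m+1}{2}$.

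My plan is to exploit upper-semicontinuity of $H_I(t)$ in the positions of the points: the locus of $(p_1,\ldots,p_r)\in(\pr{2})^r$ where $H_I(t)$ is at most a prescribed value is Zariski open, so it suffices to exhibit a single configuration of $r$ points achieving the conjectured dimension. After blowing up the $r$ points, $\pi:X\to\pr{2}$ with exceptional divisors $E_1,\ldots,E_r$ and pullback class $H$ of a line, one identifies the homogeneous piece $I_t$ with $H^0(X,\calo_X(L))$ for $L=tH-m(E_1+\cdots+E_r)$. Riemann--Roch on $X$ gives $\chi(L)=\binom{t+2}{2}-r\binom{m+1}{2}$, so Conjecture \ref{SHGHconjSpecCase} is equivalent to $h^1(X,L)=h^2(X,L)=0$ when $\chi(L)\geq 0$, and to $h^0(X,L)=0$ otherwise.

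From here I would pursue two complementary tactics. First, the Cremona group acting on $X$ (as developed in Section \ref{Weyl}) can be used to replace $L$ by a Weyl-equivalent line bundle of smaller $t$ or smaller $\sum m_i$, enabling an induction; this is precisely the mechanism by which the cases $r\leq 8$ are eventually reduced to trivial configurations. Second, a Ciliberto--Miranda style degeneration of $\pr{2}$ into the union of two surfaces meeting along a rational curve, with the fat points distributed between the components, would reduce the computation of $H_I(t)$ via a Mayer--Vietoris sequence to the same question on each component with fewer points or smaller multiplicities, which one can hope to handle by induction on $r+m$.

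The main obstacle --- and the reason Conjecture \ref{SHGHconjSpecCase} remains open --- is controlling $h^1(X,L)$: one must rule out unexpected contributions from curves of negative self-intersection passing with high multiplicity through the $p_i$. For $r\geq 9$ generic points the anticanonical class $-K_X=3H-E_1-\cdots-E_r$ is no longer nef, the Weyl orbit of $L$ may be unbounded, and $X$ carries infinitely many $(-1)$-curves; any of these could in principle force a cohomology jump invisible to Euler characteristic. Showing that no such curve contributes a base component for generic points, uniformly in all $(t,m,r)$ with $r\geq 9$, is exactly the step where all existing degenerations and specializations break down, and where any successful attack will have to introduce a genuinely new idea.
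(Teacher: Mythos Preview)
The statement is labeled a \emph{Conjecture} in the paper, and the paper does not prove it; indeed it is a well-known open problem (the SHGH Conjecture in the uniform-multiplicity case for $r\geq 9$). Your write-up correctly recognizes this: you establish the easy lower bound, reformulate the hard direction via Riemann--Roch on the blow-up, outline the standard toolkit (semicontinuity, Weyl group action, Ciliberto--Miranda degenerations), and then explicitly concede that the key vanishing step ``is exactly the step where all existing degenerations and specializations break down.'' That is an accurate and honest summary of the state of the art, but it is not a proof, and you should not label it as one.

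If the assignment was to supply a proof, the gap is simply that none is known: controlling $h^1$ (equivalently, ruling out unexpected base curves) for all $(t,m)$ with $r\geq 9$ generic points is open, and nothing in your outline closes it. If instead the assignment was to discuss the conjecture, your account is sound; the paper's own treatment is comparable---it states the conjecture, notes the easy inequality (Exercise~\ref{dimbound}), derives Nagata's conjecture from it (Exercise~\ref{SHGHimpliesNag}), and in Section~9 explains the reformulation via $h^0(X,\OO_X(F))$ and exceptional curves, without claiming a proof.
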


There has been a lot of work done on this conjecture (see for example
\cite{refAH, refCM, refHR}, but there are many more papers than this). 
The SHGH Conjecture is, however, only a starting point: 
one might also want to know the graded Betti 
numbers for a minimal free resolution. There are conjectures and results here too,
mostly for $\pr2$. See for example \cite{refIGC} for some conjectures, and
\cite{refBI, refC, refFHH, refGHI, refGI, refFreeRes, refHHF, refIda} for various results.

Most questions about fat points can be studied either from the point of view of subschemes of affine space
or of subschemes of projective space. It can be more convenient to work with homogeneous ideals,
so we will focus on the latter point of view.

We now mention some bounds on $\gamma(I)$ for an ideal $I=I_R(p_1+\cdots+p_r)$ of 
distinct points $p_i\in\pr n$.
Waldschmidt and Skoda \cite{refW, refW2, refSk} 
showed that $\gamma(I)\geq \frac{\alpha(I^{(m)})}{m+n-1}$ holds
over the complex numbers for all positive integers $m$, and in particular
that $\gamma(I)\geq \frac{\alpha(I)}{n}$. The proof involved some hard complex analysis.
Easier and more general proofs which hold for any field $\field$ in any characteristic
can be given using recent results on containments of
symbolic powers in ordinary powers of $I$: we know by \cite{refELS, refHH} that
$I^{(nm)}\subseteq I^m$ holds for all $m\geq1$. 
Thus $m\alpha(I)=\alpha(I^m)\leq\alpha(I^{(nm)})$, so dividing by $mn$ and taking the limit as $m\to\infty$ gives
$$\frac{\alpha(I)}{n}\leq\gamma(I).$$
(See \cite{refSc} for a different specifically characteristic $p>0$ argument.)

Chudnovsky \cite{refCh} showed $\frac{\alpha(I)+1}{2}\leq\gamma(I)$ in case $n=2$ and
conjectured $\frac{\alpha(I)+n-1}{n}\leq\gamma(I)$ in general; this conjecture is still open.
By Exercise \ref{W-Sk general bound} we know 
$$\frac{\alpha(I^{(m)})}{n+m-1}\leq\gamma(I).$$
Esnault and Viehweg \cite{refEV} obtained $\frac{\alpha(I^{(m)})+1}{m+n-1}\leq\gamma(I)$ in characteristic 0.
It seems reasonable to extend Chudnovsky's conjecture \cite[Question 4.2.1]{refHaHu}:

\begin{conjecture}\label{genChud}
For an ideal $I=I_R(p_1+\cdots+p_r)$ of 
distinct points $p_i\in\pr n$ and for all $m\geq 1$, 
$$\frac{\alpha(I^{(m)})+n-1}{n+m-1}\leq\gamma(I).$$
\end{conjecture}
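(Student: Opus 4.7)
The plan is to deduce Conjecture \ref{genChud} from a Harbourne--Huneke-style containment
$$
I^{(r(m+n-1))} \;\subseteq\; M^{r(n-1)}\,(I^{(m)})^{r} \qquad \text{for every } r\geq 1,
$$
where $M=(x_0,\ldots,x_n)$ is the irrelevant ideal. At $m=1$ this reduces to the containment $I^{(rn)} \subseteq M^{r(n-1)} I^{r}$ conjectured by Harbourne and Huneke as a refinement of the Ein--Lazarsfeld--Smith and Hochster--Huneke theorem $I^{(rn)} \subseteq I^{r}$; even that special case would already recover Chudnovsky's conjecture $\gamma(I) \geq (\alpha(I)+n-1)/n$.

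Assuming the containment, the remainder of the proof is short. The function $\alpha$ is order-reversing with respect to ideal inclusion, and $\alpha(JK)\geq\alpha(J)+\alpha(K)$ for homogeneous ideals $J,K$, while $\alpha(M^{r(n-1)})=r(n-1)$. Applying $\alpha$ to both sides therefore yields
$$
\alpha(I^{(r(m+n-1))}) \;\geq\; r(n-1) + r\,\alpha(I^{(m)}) \;=\; r\bigl(\alpha(I^{(m)})+n-1\bigr).
$$
Dividing by $r(m+n-1)$ and letting $r\to\infty$, the left-hand side tends to $\gamma(I)$: the symbolic-power analogue of Exercise \ref{exWaldschmidt}(c), whose proof is identical in view of $I^{(a)}I^{(b)} \subseteq I^{(a+b)}$ (so $\alpha(I^{(k)})$ is subadditive in $k$), shows that $\lim_k\alpha(I^{(k)})/k = \inf_k\alpha(I^{(k)})/k = \gamma(I)$. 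This delivers the desired bound $\gamma(I) \geq (\alpha(I^{(m)})+n-1)/(m+n-1)$.

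The main obstacle, which is essentially the whole content of the conjecture, is establishing the containment. In characteristic zero I would attempt to sharpen the asymptotic multiplier ideal argument of Esnault--Viehweg, which produced the strictly weaker numerator $+1$ in $(\alpha(I^{(m)})+1)/(m+n-1)\leq\gamma(I)$: pass to a log resolution $\pi\colon Y\to\pr{n}$ dominating the blow-ups at the $p_i$, work with the asymptotic multiplier ideal of the graded system $\{(I^{(m)})^{k}\}_{k\geq 0}$, and exploit the fact that each exceptional divisor over a smooth point $p_i$ appears with multiplicity $n-1$ in the relative canonical divisor $K_{Y/\pr{n}}$, so that the adjunction shift cedes the desired extra $n-1$ orders of vanishing beyond what Esnault--Viehweg extracted. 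The delicate point is ensuring that the enlargement of the multiplier ideal does not consume this gain, which would likely require a Skoda-type global generation step. In positive characteristic, multiplier ideals are unavailable and Chudnovsky's original conjecture itself is open for $n>2$, so that is where I expect the true obstruction to lie and where a substitute via test ideals or tight closure would be needed.
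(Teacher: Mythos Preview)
The statement you are attempting to prove is labeled as a \emph{conjecture} in the paper, and the paper does not offer a proof of it. Immediately after stating Conjecture~\ref{genChud}, the authors only remark that it would be sharp (star configurations achieve equality) and then move on. So there is no ``paper's own proof'' to compare against; the inequality is presented as open.

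Your proposal is accordingly not a proof but a reduction: you show, correctly, that the Harbourne--Huneke-type containment $I^{(r(m+n-1))}\subseteq M^{r(n-1)}(I^{(m)})^r$ would imply Conjecture~\ref{genChud} via the standard $\alpha$-and-limit argument. That implication is fine and is indeed the reasoning behind why Harbourne and Huneke posed such containments. But the containment you assume is itself conjectural --- it is exactly the kind of statement in \cite{refHaHu} that motivates Conjecture~\ref{genChud} in the first place --- so you have traded one open problem for another of equal or greater strength. You are candid about this (``essentially the whole content of the conjecture''), and your sketch of a possible multiplier-ideal attack in characteristic zero is reasonable speculation, but it remains speculation: the Esnault--Viehweg argument genuinely gives only the $+1$ numerator, and nobody has yet pushed it to $+(n-1)$ for arbitrary point configurations. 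In short, there is no error in your logic, but there is also no proof here; the genuine gap is precisely the unproven containment, and that gap is the conjecture.
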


If this conjecture is correct, it is sharp, since there are configurations of points
(so-called star configurations) for which equality holds (apply
\cite[Lemma 8.4.7]{refB. et al} with $j=1$).

{\vskip\baselineskip\noindent\Large\bf Exercises}

\setcounter{theorem}{0}

\begin{exercise}\label{homogprod}
Given $r>1$ and distinct points $p_1,\ldots,p_r\in\pr{n}$ with $m_i>0$ for all $i$, show that
$I(p_1)^{m_1}\cdots I(p_r)^{m_r}\subsetneq \cap_{i=1}^rI(p_i)^{m_i}$.
\end{exercise}

\begin{exercise}\label{nondecrHF}
Let $p_1,\ldots,p_r\in\pr n$ be distinct points. Let 
$I=I_R=I(m_1p_1+\cdots+m_rp_r)\subset R$.
Show that multiplication by a linear form $F$ that does not vanish at any of the points $p_i$
induces injective vector space homomorphisms $R_t/I_t\to R_{t+1}/I_{t+1}$.
Conclude that $H_{R/I}$ is a nondecreasing function of $t$.
\end{exercise}

\begin{exercise}\label{incrHF}
Let $p_1,\ldots,p_r\in\pr n$ be distinct points. Let 
$I=I_R=I(m_1p_1+\cdots+m_rp_r)\subset R$.
Show that $H_{R/I}(t)$ is strictly increasing until it becomes constant (i.e.,
if $c$ is the least $t$ such that $H_{R/I}(c)=H_{R/I}(c+1)$,
show that $H_{R/I}(t)$ is a strictly increasing function for $0\leq t\leq c$, and that
$H_{R/I}(t)=H_{R/I}(c)$ for all $t\geq c$).
\end{exercise}

\begin{exercise}\label{decrHF}
Give an example of a monomial ideal $J\subset\field[x,y]$ such that
$H_{R/J}$ is eventually constant but is not nondecreasing.
\end{exercise}

\begin{exercise}\label{SHGHimpliesNag}
Show that Conjecture \ref{SHGHconjSpecCase} 
implies the $n=2$ case of Conjecture \ref{NagIarroConj}.
\end{exercise}

\begin{exercise}\label{W-Sk general bound}
If $I\subset R$ is the radical ideal of a finite set of points in $\pr n$, then 
$I^{((m-1+n)t)}\subseteq (I^{(m)})^t$ \cite{refELS, refHH}.
Use this
to show $$\frac{\alpha(I^{(m)})}{n+m-1}\leq\gamma(I).$$
\end{exercise}

\begin{exercise}\label{easyContainment}
Let $r,m\geq 1$. If $I=I(p_1+\cdots+p_s)\subset R$ is the radical ideal of a finite set of 
distinct points $p_i\in\pr n$, show $I^r\subseteq I^{(m)}$ if and only if $r\geq m$.
\end{exercise}

\section{Examples: bounds on the Hilbert function of fat point subschemes of $\pr2$}

Let $p_1,\ldots,p_r\in\pr2$ be distinct points. Let $m_1,\ldots,m_r$ be positive integers.
Let $L_0,\ldots,L_{s-1}$ be lines, repeats allowed,
such that every point $p_i$ is on at least $m_i$ of the lines $L_j$.
Let $Z_0=Z=m_1p_1+\cdots+m_rp_r$. 
Define $Z_{j+1}$, for $j=0,\ldots,s-1$, recursively as follows. 
We set $m_{i0}=m_i$ for all $i$ and $Z_j=m_{1j}p_1+\cdots+m_{rj}p_r$.
Then $Z_{j+1}=m_{1\,j+1}p_1+\cdots+m_{r\,j+1}p_r$
where $m_{i\, j+1}=m_{ij}$ if $p_i\not\in L_j$,
$m_{i\, j+1}=0$ if $m_{ij}=0$, and
$m_{i\, j+1}=m_{ij}-1$ if $p_i\in L_j$ and $m_{ij}>0$.
We get a sequence of fat point subschemes 
$Z=Z_0\supseteq Z_1 \supseteq \cdots \supseteq Z_s=\varnothing$.
Geometrically, $Z_{j+1}$ is the fat point subscheme residual  
to $Z_j$ with respect to the line $L_j$. Algebraically,
$I(Z_{j+1})=I(Z_j):(F_j)$, where $F_j$ is the form defining
the line $L_j$.

Define a reduction vector ${\bf d}=(d_0,\ldots,d_{s-1})$, where 
$d_j=\sum_{p_i\in L_j}m_{i\,j-1}$, so $d_j$
is the sum of the multiplicities $m_{i\,j-1}$ for points $p_i\in L_j$.
From the reduction vector we construct a new vector,
$\operatorname{diag}({\bf d})$. The entries of $\operatorname{diag}({\bf d})$
are obtained as follows. Make an arrangement of dots in $s$ rows,
the first row at the bottom, the next row above it (aligned at the left), and so on,
one row for each entry of ${\bf d}$, where the number of dots in each row is 
given by the corresponding entry of ${\bf d}$ and where the dots are placed at integer lattice points.
The entries of $\operatorname{diag}({\bf d})$ are obtained by counting the number of dots
on each diagonal (of slope $-1$).
Figure \figone\ is Example 2.5.5 of \cite{refCHT}, where ${\bf d}=(8,6,5,2)$ and 
$\operatorname{diag}({\bf d}) = (1, 2, 3, 4, 4, 3, 3, 1, 0, 0, \dots)$.

\begin{figure}[t]
\caption{Obtaining $\operatorname{diag}({\bf d})$ from a reduction vector ${\bf d}$.}
\setlength{\unitlength}{0.75cm}
\hspace{.15in}
\begin{picture}(6,4.5)
\multiput(0,0)(1,0){8}{\circle*{.3}}
\multiput(0,1)(1,0){6}{\circle*{.3}}
\multiput(0,2)(1,0){5}{\circle*{.3}}
\multiput(0,3)(1,0){2}{\circle*{.3}}
\put(0,0){\line(1,0){8}}
\put(0,0){\line(0,1){4}}
\put(.5,-.5){\line(-1,1){1}}
\put(1.5,-.5){\line(-1,1){2}}
\put(2.5,-.5){\line(-1,1){3}}
\put(3.5,-.5){\line(-1,1){4}}
\put(4.5,-.5){\line(-1,1){4}}
\put(5.5,-.5){\line(-1,1){3}}
\put(6.5,-.5){\line(-1,1){3}}
\put(7.5,-.5){\line(-1,1){1}}
\end{picture}
\end{figure}

\begin{theorem}[{\cite[Theorem 1.1]{refCHT}}]\label{CHTtheorem}
Let ${\bf d}$ be the reduction vector for a fat point scheme $Z\subset \pr2$
with respect to a given choice of lines $L_i$, and let $v_{t+1}$ be the sum of the first $t+1$ entries of
$\operatorname{diag}({\bf d})$. Then $H_{R/I(Z)}(t)\geq v_{t+1}$, and equality holds for all $t$
if the entries of ${\bf d}$ are strictly decreasing.
\end{theorem}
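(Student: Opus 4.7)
My approach is to induct on the number of lines $s$, using the residuation short exact sequence
$$0 \to (R/I(Z_1))(-1) \xrightarrow{\cdot F_0} R/I(Z) \to R/(I(Z)+(F_0)) \to 0,$$
which is valid because $I(Z_1) = I(Z):(F_0)$. Taking $\field$-dimensions in degree $t$ yields
$$H_{R/I(Z)}(t) = H_{R/I(Z_1)}(t-1) + H_{R/(I(Z)+(F_0))}(t).$$
The base case $s=0$ forces $Z=\varnothing$, so both sides vanish.

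The quotient $R/(I(Z)+(F_0))$ is the homogeneous coordinate ring of the scheme-theoretic intersection $Z\cap L_0$, viewed inside $L_0 \cong \pr{1}$. Since $L_0$ is smooth at each $p_i\in L_0$, the restriction of $I(p_i)^{m_i}$ to $L_0$ is the $m_i$-th power of the local maximal ideal there, so $Z\cap L_0$ has degree $d_0$. The image of $I(Z)$ in $\field[L_0]$ is contained in the saturated fat-point ideal of a degree-$d_0$ scheme on $\pr{1}$, hence
$$H_{R/(I(Z)+(F_0))}(t) \geq \min(t+1,d_0).$$
Applying the inductive hypothesis to $Z_1$ with lines $L_1,\ldots,L_{s-1}$, whose reduction vector is ${\bf d}' = (d_1,\ldots,d_{s-1})$, yields $H_{R/I(Z_1)}(t-1) \geq v'_t$, where $v'_t$ is the sum of the first $t$ entries of $\operatorname{diag}({\bf d}')$. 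A direct count on the dot diagram confirms the identity $v'_t + \min(t+1,d_0) = v_{t+1}$: the bottom row of $d_0$ dots in $\operatorname{diag}({\bf d})$ contributes exactly $\min(t+1,d_0)$ to the first $t+1$ diagonals, while the remaining rows, shifted down by one, form $\operatorname{diag}({\bf d}')$ counted up to diagonal $t-1$. This gives the desired inequality $H_{R/I(Z)}(t) \geq v_{t+1}$.

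For the equality statement under $d_0>d_1>\cdots>d_{s-1}$, the strict decrease passes to ${\bf d}'$, so by induction we may assume $H_{R/I(Z_1)}(t-1) = v'_t$. The remaining task is to prove
$$H_{R/(I(Z)+(F_0))}(t) = \min(t+1, d_0)$$
for every $t$, i.e., that the restriction map $I(Z)_t \to \field[L_0]_t$ hits the full saturated ideal of $Z \cap L_0$ in each degree. For $t \geq d_0$ this is immediate from comparing cokernels. For $t < d_0$, one must produce, for every $\bar g \in \field[L_0]_t$ vanishing to the prescribed orders at the points of $L_0$, an explicit lift to $I(Z)_t$. The \emph{main obstacle} is constructing these lifts cleanly: the strict inequalities $d_0 > d_1 > \cdots > d_{s-1}$ are precisely what make them available, since each subsequent line $L_j$ then has enough extra room beyond its own imposed vanishing conditions to absorb correction terms built from products $F_0^{a_0}\cdots F_{s-1}^{a_{s-1}}$. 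In effect, this is a second layer of induction inside the recursion on $s$, where the strict decrease provides the slack needed simultaneously at every level of the residuation.
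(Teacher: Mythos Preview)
Your inequality argument is correct and is essentially a repackaging of the paper's approach: the paper uses the sequence $0\to I(Z_{i+1})_{t-1}\to I(Z_i)_t\to I_{L_i}(Z_i\cap L_i)_t$ and iterates over all $i$, whereas you use the complementary sequence on quotients and induct on $s$. The combinatorial identity $v'_t+\min(t+1,d_0)=v_{t+1}$ is right, and the check via the dot diagram is exactly the kind of analysis the paper alludes to.

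The equality argument, however, has a genuine gap, and in fact you have the two cases reversed. For $t<d_0$ the target $I_{L_0}(Z\cap L_0)_t$ is \emph{zero}, since a degree-$d_0$ subscheme of $\pr1$ has no forms of degree $<d_0$ in its ideal; surjectivity is therefore vacuous and $H_{R/(I(Z)+(F_0))}(t)=t+1$ automatically. There is nothing to lift. The nontrivial range is $t\ge d_0$, where you must show the restriction $I(Z)_t\to I_{L_0}(Z\cap L_0)_t$ is onto. Your claim that this is ``immediate from comparing cokernels'' is not justified: it is precisely here that the strict decrease $d_0>d_1>\cdots$ does its work. The paper handles this via sheaf cohomology, observing that surjectivity is controlled by $h^1(\II_{Z_1}(t-1))$, and that this $h^1$ vanishes whenever the target is nonzero.

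Your inductive framework can in fact be completed without cohomology, but not by the ``explicit lifting via products of $F_j$'' you sketch. The clean argument is: for $t\ge d_0$, strict decrease gives $d_j+j-2\le d_0-2$ for every $j\ge1$, so every dot in the ${\bf d}'$ diagram lies on a diagonal $\le t-1$; hence $v'_t=\deg Z_1$, and by induction $H_{R/I(Z_1)}(t-1)=\deg Z_1$. Since always $H_{R/I(Z)}(t)\le\deg Z$, the exact sequence forces $H_{R/(I(Z)+(F_0))}(t)\le\deg Z-\deg Z_1=d_0$, matching the lower bound. This is the missing step; the lifting heuristic you describe is neither needed nor the right mechanism.
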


For example, choose distinct lines $L_0,L_1,L_2$ and $L_3$.
Now choose any 8 points on $L_0$ (possibly including points of intersection 
of $L_0$ with the other lines), then any
6 additional points on $L_1$ (again possibly including points of intersection 
of $L_1$ with the other lines but avoiding points already chosen, so now we have 14
distinct points), 5 on $L_2$ (possibly including points of intersection 
of $L_2$ with the other lines but avoiding points already chosen, 
so now we have 19 distinct points)
and 2 on $L_3$ (as before possibly including points of intersection 
of $L_3$ with the other lines but avoiding points already chosen, 
so we end up with 21 distinct points). Then $Z_0$
is the reduced scheme consisting of all 21 points; removing the first 8 gives
$Z_1$, removing from $Z_1$ the next 6 gives $Z_2$, removing from
$Z_2$ the next 5 gives $Z_3$ and removing the last 2 gives $Z_4=\varnothing$.
The corresponding reduction vector is ${\bf d}=(8,6,5,2)$, and 
(regarding a function of the nonnegative integers as a sequence)
$H_{R/I(Z)}$ is $(1,3,6,10,14,17,20,21,21,21,\ldots)$.

It is sometimes convenient to give not $H_{R/I(Z)}$ itself, but its first difference $\Delta H_{R/I(Z)}$,
defined as $\Delta H_{R/I(Z)}(0)=1$ and $\Delta H_{R/I(Z)}(t)=H_{R/I(Z)}(t)-H_{R/I(Z)}(t-1)$
for $t>0$. In the preceding example, $\Delta H_{R/I(Z)}$ is $(1,2,3,4,4,3,3,1,0,0,\ldots)$.
In particular, when the entries of ${\bf d}$ are strictly decreasing, then 
$\Delta H_{R/I(Z)}=\operatorname{diag}({\bf d})$.

\begin{proof}[Sketch of the proof of Theorem \ref{CHTtheorem}]
We content ourselves here with merely obtaining an upper bound
on $H_{R/I}(t)$. The fact that this bound agrees with the statement given in the theorem
involves some combinatorial analysis, for which we refer you to the original paper.

We pause for a notational comment.
Given a line $L\subset\pr2$ and a point $p\in L\subset \pr2$, it can be ambiguous whether
by $I(p)$ we mean the ideal of $p$ in $\field[L]$ or in $\field[\pr2]$.
Thus we use $I(p)$ for the ideal in $\field[\pr2]$ and we use 
$I_L(p)$ to indicate the ideal of $p$ in $\field[L]$.

Let $Z=Z_0$ be the original fat point scheme and let $Z_1$, $Z_2$, $\ldots$, $Z_s=\varnothing$
be the successive residuals with respect to the lines $L_0,L_1,\ldots, L_{s-1}$.
Let $I=I(Z)\subset\field[\pr2]$ be the ideal defining $Z$.
Let ${\bf d}=(d_0,\ldots,d_{s-1})$. Let $F_i$ be a linear form defining $L_i$.
Given any fat point subscheme $X=a_1q_1+\cdots+a_uq_u\subsetneq\pr2$,
we have the ideal $I(X)\subset\field[\pr2]$ as usual.
Given a line $L\subset \pr2$ defined by a linear form $F$, the scheme theoretic intersection
$X\cap L=\sum_{q_i\in L}a_iq_i$ is the fat point subscheme of $L\cong\pr1$
defined by the ideal 
$I_L(X\cap L)=\cap_{q_i\in L}I_L(q_i)^{a_i}\subset\field[L]=\field[\pr2]/(F)\cong\field[\pr1]$, 
where for a point $q\in L\subset\pr2$,
$I_L(q)\subset\field[L]$ is the principal ideal defining $q$ as a point
of $L\cong\pr1$. Specifically, $I_L(q)=I(q)/(F)\subset\field[L]=\field[\pr2]/(F)$.

We have canonical inclusions $I(Z_{i+1})\to I(Z_i)$ given by multiplying by $F_i$.
The quotient $I(Z_i)/F_iI(Z_{i+1})$ is an ideal of $\field[L_i]$ whose saturation
is $I_L(Z_i\cap L_i)$. Thus we have an inclusion $I(Z_i)/F_iI(Z_{i+1})\subseteq I_{L_i}(Z_i\cap L_i)$
which need not be an equality. Thus for all $t$ we have
$I(Z_i)_t/F_i(I(Z_{i+1}))_{t-1}=(I(Z_i)/F_iI(Z_{i+1}))_t\subseteq (I_{L_i}(Z_i\cap L_i))_t$,
but for $t\gg0$ this becomes
$$I(Z_i)_t/F_i(I(Z_{i+1}))_{t-1}=(I(Z_i)/F_iI(Z_{i+1}))_t=(I_{L_i}(Z_i\cap L_i))_t.$$

Thus for each $i$ and $t$ we have an exact sequence
$$0\to (I(Z_{i+1}))_{t-1}\to (I(Z_i))_t\to (I_{L_i}(Z_i\cap L_i))_t.$$
By definition of the reduction vector, $Z_i\cap L_i$ has degree $d_i$.
Since $I_{L_i}(Z_i\cap L_i)$ is a principal ideal, we have
$\dim_\field((I_{L_i}(Z_i\cap L_i))_t)=\binom{t-d_i+1}{1}=\max\{t-d_i+1,0\}$,
since there are $t-d_i+1$ monomials in two variables of degree $t-d_i$
whenever $t-d_i\geq 0$. Thus for each $i$ we get an inequality:
for $i=0$ we have
$$\dim_\field((I(Z_0))_t)\leq\dim_\field((I(Z_1))_{t-1})+\max\{t-d_0+1,0\};$$
for $i=1$ we have
$$\dim_\field((I(Z_1))_{t-1})\leq\dim_\field((I(Z_2))_{t-2})+\max\{t-1-d_1+1,0\};$$
and continuing in this way we eventually obtain
$$\dim_\field((I(Z_{s-1}))_{t-(s-1)})\leq\dim_\field((I(Z_s))_{t-s})+\max\{t-(s-1)-d_{s-1}+1,0\}.$$
Note that 
$(I(Z_s))_{t-s}=M_{t-s}$, $M$ being the irrelevant ideal (so generated by the variables),
hence $\dim_\field((I(Z_s))_{t-s})=\binom{t-s+2}{2}$. 

By back substitution, we get 
$$\dim_\field((I(Z_0))_t)\leq\binom{t-s+2}{2}+\sum_{0\leq i\leq s-1}\max\{t-i-d_i+1,0\}.$$
Thus 
\begin{align*}
H_{R/I}(t)=&\binom{t+2}{2}-\dim_\field((I(Z_0))_t) \\
\geq&\binom{t+2}{2}-\binom{t-s+2}{2}-\sum_{0\leq i\leq s-1}\max\{t-i-d_i+1,0\}.
\end{align*}
A combinatorial analysis shows this bound is what is claimed in the statement of the theorem.
Basically, if you arrange the dots as specified by the reduction vector ${\bf d}$ (for 
Figure \figtwo, ${\bf d}=(8,5,5,2)$), then
$\binom{t+2}{2}-\binom{t-s+2}{2}-\sum_{0\leq i\leq s-1}\max\{t-i-d_i+1,0\}$ will for 
each $t$ count the number of black dots in an isosceles right triangle with legs 
of length $t$; in the Figure \figtwo\ this triangle is the big triangle, which has $t=6$. 
The term $\binom{t+2}{2}$ counts the total number of dots
in the big triangle, black and open (giving 28 in Figure \figtwo). 
To get the number of black dots, you must first
subtract the open dots in the little triangle; there are $\binom{t-s+2}{2}$ of these 
(where, in Figure \figtwo, $t=6$ and $s=4$, giving 6 open dots). The remaining terms 
then subtract off the 
number of open dots in the big triangle where each term accounts for each 
horizontal line on which there is a black dot (these terms would be
$\max\{t-0-d_0+1,0\}=\max\{6-8+1,0\}=0$ for the bottom row,
$\max\{t-1-d_1+1,0\}=\max\{6-1-5+1,0\}=1$ for the next row up,
$\max\{t-2-d_2+1,0\}=\max\{6-2-5+1,0\}=0$ for the row above that, and
$\max\{t-3-d_3+1,0\}=\max\{6-3-2+1,0\}=2$ for the top row below the little triangle).

\begin{figure}[t]
\caption{Obtaining upper bounds on Hilbert functions.}
\vskip.6in\hskip.1in
\setlength{\unitlength}{0.75cm}
\begin{picture}(6,4.5)
\multiput(0,0)(1,0){8}{\circle*{.3}}
\multiput(0,1)(1,0){5}{\circle*{.3}}
\multiput(5,1)(1,0){3}{\circle{.3}}
\multiput(0,2)(1,0){5}{\circle*{.3}}
\multiput(5,2)(1,0){3}{\circle{.3}}
\multiput(0,3)(1,0){2}{\circle*{.3}}
\multiput(2,3)(1,0){6}{\circle{.3}}
\multiput(0,4)(1,0){3}{\circle{.3}}
\multiput(0,5)(1,0){2}{\circle{.3}}
\multiput(0,6)(1,0){1}{\circle{.3}}
\put(0,0){\line(1,0){6}}
\put(0,0){\line(0,1){6}}
\put(6,0){\line(-1,1){6}}
\put(.075,4){\line(1,0){1.85}}
\put(.075,4){\line(0,1){1.9}}
\put(1.9,4){\line(-1,1){1.85}}
\end{picture}
\end{figure}
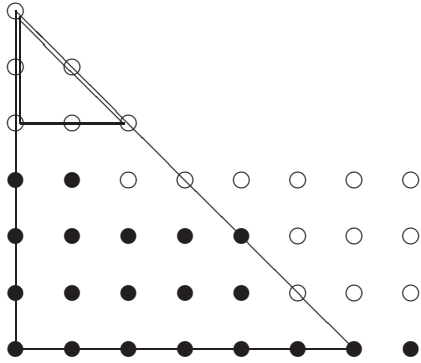

The fact that the bound is an equality when the entries of the reduction vector
are decreasing involves showing that the third map in the sequence 
$$0\to (I(Z_{i+1}))_{t-1}\to (I(Z_i))_t\to (I_{L_i}(Z_i\cap L_i))_t\eqno{(*)}$$
is surjective for every $i$ and $t$. This is done using the long exact sequence
in cohomology, where the terms in $(*)$ become modules of global sections
of ideal sheaves, and where the lack of surjectivity on the right is controlled by an $h^1$ term.
Working back from the last sequence,
one shows for each $i$ and $t$ that either the controlling $h^1$ term is 0
(and hence we have surjectivity for that $i$ and $t$)
or $(I_{L_i}(Z_i\cap L_i))_t=0$, hence again we have 
surjectivity for the given $i$ and $t$.
\end{proof}

{\vskip\baselineskip\noindent\Large\bf Exercises}

\setcounter{theorem}{0}

\begin{exercise}\label{diffOseqEx}
Let $r_1>\cdots>r_s>0$ be integers. Pick $s$ distinct lines, 
and on line $i$ pick any $r_i$ points,
such that none of the points chosen is a point of intersection 
of the $i$th line with another of the $s$ lines.
Let $Z$ be the reduced scheme consisting of all of the chosen points.
Show that $\Delta H_{R/I(Z)}$ is the sequence 
$(1,2,\ldots,s,{}^{r_s-1}s,{}^{r_{s-1}-r_s-1}(s-1),{}^{r_{s-2}-r_{s-1}-1}(s-2),\ldots)$,
where ${}^ij$ denotes a sequence consisting of $i$ repetitions of $j$.
\end{exercise}

\begin{exercise}\label{starConfigEx} 
Take any 4 distinct lines $L_0,L_1,L_2,L_3$, no three of which contain a point.
There are 6 points, $p_1,\ldots,p_6$, where pairs of the lines intersect.
Let $Z=3p_1+\cdots+3p_6$. Determine the Hilbert function of $R/I(Z)$.
(This generalizes to $s$ lines, no 3 of which are coincident at a point;
see \cite{refCHT}.)
\end{exercise}

\begin{exercise}\label{BndOnReg} 
Let $p_1,\ldots,p_r$ be distinct points of $\pr 2$.
Let $Z=m_1p_1+\cdots+m_rp_r$. 
Pick lines $L_0,\ldots,L_{r-1}$ such that $L_{i-1}$ contains $p_i$ but does not contain $p_j$
for $j\neq i$. Let ${\bf d}$ be the reduction vector obtained
by choosing $m_1$ copies of $L_0$, then $m_2$ copies of $L_1$, etc.
Show that ${\bf d}=(m_1,m_1-1,m_1-2,\ldots, m_1-(m_1-1),m_2,m_2-1,\ldots,m_2-(m_2-1),
\ldots,m_r,m_r-1,\ldots,m_r-(m_r-1))$;
conclude that $H_{R/I(Z)}(t)=\sum_i\binom{m_i+1}{2}$ for all
$t\geq m_1+\cdots+m_r-1$.
\end{exercise}

\section{Hilbert functions: some structural results}

By Exercises \ref{nondecrHF} and \ref{incrHF}, we know the Hilbert function of a fat point 
subscheme is nondecreasing in a strong way (it is strictly increasing until it is constant).
It is possible to characterize the functions that are Hilbert functions of fat point subschemes:
the Hilbert function of every fat point subscheme of projective space
is what is known as a \emph{differentiable O-sequence} (defined below), and 
for every differentiable O-sequence $f$
there is an $n$ and a finite set of points $p_1,\ldots,p_r\in\pr n$ such that
$f=H_{R/I}$ where $R=\field[\pr n]$ and $I=I_R(p_1+\cdots+p_r)$.

It is worth noting that this leads to a characterization of Hilbert functions of
reduced 0-dimensional subschemes of projective space:
a function $f$ is $H_{R/I}$ for some homogeneous radical ideal $I$ of a finite set of 
points of projective space if and only if $f$ is a 0-dimensional 
differentiable O-sequence.
It is also true that a function $f$ is $H_{R/I}$ for the homogeneous ideal 
$I=I(Z)$ for some fat point subscheme $Z$  
of projective space if and only if $f$ is a 0-dimensional 
differentiable O-sequence, but this is because reduced schemes of finite sets of points
are special cases of fat point schemes.
It is not known, for example, which 0-dimensional differentiable 
O-sequences occur
as Hilbert functions $H_{R/I^{(2)}}$ for homogeneous radical ideals
$I$ defining finite sets of points in projective space.
(A general reference for the material in this section is \cite{refBH}.)

\begin{def-prop}[see, for example, \cite{refGK}]
Let $h$ and $d$ be positive integers.  Then $h$ can be expressed uniquely in the form
$$\binom{m_d}{d} + \binom{m_{d-1}}{d-1} + \cdots + \binom{m_j}{j}$$
where $m_d > m_{d-1} > \cdots > m_j \geq j \geq 1$.  This expression for $h$ is called the \emph{$d$-binomial expansion of $h$}.  Given the $d$-binomial expansion of $h$, we also define
$$h^{\langle d \rangle} = \binom{m_d+1}{d+1} + \binom{m_{d-1}+1}{d} + \cdots + \binom{m_j+1}{j+1}.$$
\end{def-prop}

\begin{example}
The 3-binomial expansion of 15 is
$$15 = \binom{5}{3} + \binom{3}{2} + \binom{2}{1} = 10 + 3 + 2.$$
It is convenient to relate this to Pascal's triangle.
The binomial coefficients $\binom{m}{d}$ with $d$ fixed lie on a diagonal
of slope 1 say in Pascal's triangle. So to obtain the $d$-binomial expansion
of $h$, one picks the largest $\binom{m_d}{d}$ on this line less than or equal to $h$.
Then one makes up as much of the remainder $h-\binom{m}{d}$ as possible
by choosing a coefficient $\binom{m_{d-1}}{d-1}$ on the next line up of slope 1, etc.
To obtain $h^{\langle 3 \rangle}$, one just slides the choices made  for $h$
down and to the right. Thus
$$15^{\langle 3 \rangle} = \binom{6}{4} + \binom{4}{3} + \binom{3}{2} = 15 + 4 + 3 = 22.$$
\end{example}

\begin{definition}
A sequence of nonnegative integers $\{h_d\}_{d \geq 0}$ is called an \emph{O-sequence} if
\begin{itemize}
\item $h_0 = 1$, and
\item $h_{d+1} \leq h_d^{\langle d \rangle}$ for all $d \geq 1$, where $0^{\langle d \rangle}=0$ for all $i$.
\end{itemize}
\end{definition}

With these definitions we can state a well-known 
theorem of Macaulay (see \cite{refM} and \cite{refSt} for full details):

\begin{theorem}[Macaulay's Theorem]
The following are equivalent:
\begin{enumerate}
\item{(a)} $\{h_d\}_{d \geq 0}$ is an O-sequence;
\item{(b)} $\{h_d\}_{d \geq 0}$ is the Hilbert function $H_{R/I}$ for some homogeneous ideal $I \subsetneq R$; and
\item{(c)} $\{h_d\}_{d \geq 0}$ is the Hilbert function $H_{R/J}$ for some monomial ideal $J \subsetneq R$.
\end{enumerate}
\end{theorem}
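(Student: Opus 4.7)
My plan is to prove the three implications (c) $\Rightarrow$ (b) $\Rightarrow$ (a) $\Rightarrow$ (c). The first is trivial: any monomial ideal is in particular a homogeneous ideal. For (b) $\Rightarrow$ (c), I would invoke Gr\"obner basis theory. Fix any term order on $R$ (for concreteness, the lexicographic order) and let $J = \operatorname{in}(I)$ be the initial ideal. Then $J$ is a monomial ideal, and the division algorithm shows that the monomials \emph{not} in $J$ descend to a $K$-basis of $R/I$ in each degree. Hence $H_{R/I} = H_{R/J}$, which realizes the Hilbert function of the arbitrary homogeneous ideal by that of a monomial ideal. This reduces the substantive content of the theorem to the equivalence of (a) and (c).

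For (a) $\Rightarrow$ (c) I would construct a monomial ideal explicitly. Work in $R = K[x_0,\ldots,x_n]$ with $n$ large enough that $\binom{n+d}{n} \geq h_d$ for all relevant $d$. In each degree, let $N_d \subset R_d$ be the lex-segment consisting of the $h_d$ lex-largest monomials, and let $J_d = R_d \setminus N_d$ be its complement. The O-sequence inequality is precisely what is needed to guarantee $R_1 \cdot J_d \subseteq J_{d+1}$: multiplying the lex-smallest $\binom{n+d}{n} - h_d$ monomials by variables produces only lex-small monomials in degree $d+1$, and $h_{d+1} \leq h_d^{\langle d \rangle}$ ensures that $N_{d+1}$ is small enough to accommodate this image. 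Hence $J = \bigoplus_d J_d$ is a (lex-segment) monomial ideal with $H_{R/J} = \{h_d\}$.

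For (c) $\Rightarrow$ (a), which combined with the initial-ideal reduction will also yield (b) $\Rightarrow$ (a), I would prove Macaulay's combinatorial compression lemma: among all monomial ideals $J$ with a fixed value of $h_d = H_{R/J}(d)$, the lex-segment ideal maximizes $H_{R/J}(d+1)$, and for the lex-segment one computes directly $H_{R/J}(d+1) = h_d^{\langle d \rangle}$ by reading off the $d$-binomial expansion from how the lex-largest $h_d$ monomials are distributed among the variables. Thus $h_{d+1} \leq h_d^{\langle d \rangle}$ for every monomial ideal $J$, and hence for every homogeneous ideal. The calculation for lex-segments is a direct (if intricate) induction using that the lex-initial $\binom{m_d}{d}$ monomials in degree $d$ are exactly the monomials in $K[x_0,\ldots,x_{m_d-d}]$ shifted appropriately, from which the binomial expansion of $h_d^{\langle d \rangle}$ drops out.

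The main obstacle, and the heart of Macaulay's theorem, is the compression lemma itself: proving that no monomial ideal grows more slowly than the lex-segment ideal of the same Hilbert function. The standard argument performs ``compression'' moves, replacing $J_d$ one variable at a time by a lex-closed version of its intersection with subalgebras $K[x_0,\ldots,x_i]$, and shows that each such move can only decrease (or preserve) the size of $J_{d+1}$. This is a genuine combinatorial theorem whose details I would take from \cite{refBH} rather than rederive here; once it is in hand, the rest of the proof is formal.
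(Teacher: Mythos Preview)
The paper does not prove Macaulay's Theorem; it merely states it as a well-known result and directs the reader to \cite{refM} and \cite{refSt} for full details. So there is no proof in the paper to compare against.

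Your outline is the standard modern proof and is correct in structure. A couple of minor points: your opening sentence announces the cycle (c) $\Rightarrow$ (b) $\Rightarrow$ (a) $\Rightarrow$ (c), but you then actually argue (c) $\Rightarrow$ (b), (b) $\Rightarrow$ (c) via initial ideals, (a) $\Rightarrow$ (c) via lex-segments, and (c) $\Rightarrow$ (a) via compression, deriving (b) $\Rightarrow$ (a) as a corollary. The logic is fine but the roadmap should match. Also, you candidly defer the compression lemma to \cite{refBH}; since that is indeed the entire substance of the theorem, what you have written is more an annotated reference than a proof, which is exactly what the paper itself does. If the goal is a self-contained argument, the compression step needs to be carried out.
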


\begin{definition}
Let $\mathcal{H} = \{h_d\}_{d \geq 0}$ be an O-sequence and $\Delta \mathcal{H} = \{e_d\}_{d \ge 0}$ be defined by $e_0 = h_0$ and $e_d = h_d - h_{d-1}$ for $d \geq 1$.  We say that $\mathcal{H}$ is a \emph{differentiable O-sequence} if $\Delta \mathcal{H}$ is also an O-sequence. We say $\mathcal{H}$ is \emph{0-dimensional}
if $\Delta \mathcal{H}$ is 0 for all $t\gg0$.
\end{definition}

\begin{proposition}
Let $p_1,\ldots,p_s\in\pr n$ be distinct points, let $m_1,\ldots,m_s$ be positive integers,
and let $I=I(m_1p_1+\cdots+m_sp_s)$ be the ideal of the fat point subscheme 
$m_1p_1+\cdots+m_sp_s\subset\pr n$.
Then the Hilbert function $H_{R/I}$ is a differentiable 0-dimensional O-sequence.
\end{proposition}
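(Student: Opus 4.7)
The plan is to verify three things about $H_{R/I}$: it is an O-sequence, its first difference $\Delta H_{R/I}$ is an O-sequence, and $\Delta H_{R/I}$ eventually vanishes.

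First, since $I \subsetneq R$ is a proper homogeneous ideal (the constants are not in $I$, as $I \subseteq M$), Macaulay's Theorem immediately gives that $H_{R/I}$ is an O-sequence. For the 0-dimensional condition, I would invoke the computation of the Hilbert polynomial for a fat point scheme (Exercise \ref{dimbound}, transferred from affine to projective via \eqnref{affprofeq}): we have $H_{R/I}(t) = \sum_{i=1}^s \binom{m_i+n-1}{n}$ for all $t \gg 0$, a constant, so $\Delta H_{R/I}(t) = 0$ for $t \gg 0$.

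The main content is showing $\Delta H_{R/I}$ is itself an O-sequence. The natural approach is to realize $\Delta H_{R/I}$ as a Hilbert function of a quotient of $R$ and then apply Macaulay's Theorem again. Since $\field$ is algebraically closed (in particular infinite), I can choose a linear form $F \in R_1$ that does not vanish at any of the points $p_1,\ldots,p_s$. By Exercise \ref{nondecrHF}, multiplication by $F$ induces an injection $(R/I)_t \to (R/I)_{t+1}$ for every $t$. Consequently, from the short exact sequence of graded $R$-modules
\begin{equation*}
0 \to (R/I)(-1) \xrightarrow{\;\cdot F\;} R/I \to R/(I+(F)) \to 0,
\end{equation*}
taking dimensions in degree $t$ yields
\begin{equation*}
H_{R/(I+(F))}(t) = H_{R/I}(t) - H_{R/I}(t-1) = \Delta H_{R/I}(t)
\end{equation*}
for all $t \geq 0$ (with the convention $H_{R/I}(-1)=0$). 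Since $I+(F)$ is a proper homogeneous ideal of $R$ (it contains no nonzero constants, as its generators all lie in $M$), Macaulay's Theorem applied to $R/(I+(F))$ shows $\Delta H_{R/I}$ is an O-sequence.

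The one step requiring any thought is the choice of $F$: I need $F$ not to vanish on the finite set $\{p_1,\ldots,p_s\}$, and the existence of such an $F$ follows from the fact that a finite union of hyperplanes $V(I(p_i)_1)$ in $R_1 \cong \field^{n+1}$ cannot exhaust $R_1$ over the infinite field $\field$. This is the projective analogue of Exercise \ref{pointsavoidance}, and it is the only place the infinitude of $\field$ enters. Everything else is formal from Macaulay's Theorem and the previously established nondecreasing property of the Hilbert function of a fat point scheme.
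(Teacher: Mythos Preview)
Your proof is correct and follows essentially the same route as the paper: apply Macaulay's Theorem to $R/I$, then choose a linear form $F$ avoiding all the points, use injectivity of multiplication by $F$ (Exercise \ref{nondecrHF}) to identify $\Delta H_{R/I}$ with $H_{R/(I+(F))}$, and apply Macaulay's Theorem again. The only cosmetic difference is that the paper cites Exercise \ref{incrHF} for the $0$-dimensionality whereas you invoke the Hilbert polynomial computation via Exercise \ref{dimbound} and \eqnref{affprofeq}; both establish that $H_{R/I}$ is eventually constant.
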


\begin{proof}
By Macaulay's Theorem, $H_{R/I}$ is an O-sequence. By Exercise \ref{incrHF}, $H_{R/I}$ is 
0-dimensional. But if $x\in R$ is a linear form that does not vanish at any of the points,
and if $J=I+(x)$, then 
$$\frac{R}{J}\cong\frac{R/I}{J/I}=\frac{R/I}{((x)+I)/I}\cong \frac{R/I}{x(R/I)}$$
so we have $H_{R/J}=H_{\frac{R/I}{x(R/I)}}$
and since $x$ maps to a unit in $R/I$, we obtain 
$H_{\frac{R/I}{x(R/I)}}=\Delta H_{R/I}$. But by Macaulay's Theorem again,
$H_{R/J}$ is an O-sequence, hence $H_{R/I}$ is a differentiable O-sequence.
\end{proof}

There is also a converse:

\begin{theorem}\cite{refGMR}\label{GMRthm}
Let $\mathcal{H} = \{h_d\}_{d \geq 0}$ be a differentiable 0-dimensional
O-sequence with $h_1 \leq n+1$. Then there is a finite set of points in $\pr n$
and the ideal $I \subseteq R$ of those points is a radical ideal
such that $\mathcal{H} = H_{R/I}$. In case $n=2$, those points
can be chosen as in Exercise \ref{diffOseqEx} and hence
$\Delta \mathcal{H}=\operatorname{diag}({\bf d})$ for some decreasing sequence
${\bf d}$ of positive integers.
\end{theorem}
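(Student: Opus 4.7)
My plan is to prove the $n = 2$ refinement constructively, and then to indicate how the argument extends to general $n$ via a more elaborate inductive placement.

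For $n = 2$: since $\mathcal{H}$ is differentiable, $0$-dimensional, and $h_1 \leq 3$, the difference $\Delta\mathcal{H}$ is an O-sequence starting with $1$, bounded at degree $1$ by $2$, and eventually vanishing. Applying Macaulay's theorem in two variables (where $k^{\langle d \rangle} = k$ whenever $1 \leq k \leq d$) forces $\Delta\mathcal{H}$ to consist of an initial ramp $1, 2, 3, \ldots, s$ followed by a weakly decreasing tail that eventually vanishes, where $s$ is the maximum value of $\Delta\mathcal{H}$. I would then construct the required strictly decreasing reduction vector ${\bf d} = (d_0, \ldots, d_{s-1})$ by setting
\[
e_b \;=\; \max\{\,i : (\Delta\mathcal{H})_i \geq b+1\,\}, \qquad d_b \;=\; e_b - b + 1,
\]
for $0 \leq b \leq s-1$. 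The nested inclusion of the defining sets gives $e_0 \geq e_1 \geq \cdots \geq e_{s-1}$, whence $d_0 > d_1 > \cdots > d_{s-1}$; positivity $d_b \geq 1$ amounts to $(\Delta\mathcal{H})_b \geq b+1$, which holds because of the initial ramp. A direct count of lattice points --- the dot at $(i-b,b)$ belongs to the Young diagram of ${\bf d}$ iff $d_b \geq i-b+1$, equivalently iff $e_b \geq i$ --- then yields $\operatorname{diag}({\bf d}) = \Delta\mathcal{H}$. Exercise \ref{diffOseqEx} applied with $r_i = d_{i-1}$ now produces a reduced scheme $Z \subset \pr 2$ (lying on $s$ distinct lines) with $\Delta H_{R/I(Z)} = \operatorname{diag}({\bf d}) = \Delta\mathcal{H}$, and hence $H_{R/I(Z)} = \mathcal{H}$ since both sequences agree at degree $0$.

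For general $n$, the strategy is to realize $Z$ as a so-called $k$-configuration: a union of point sets placed on a sequence of parallel hyperplanes, each of which is itself a $k$-configuration in the respective hyperplane, with sizes dictated by an inductive decoding of $\mathcal{H}$ into hyperplane-by-hyperplane data. The base case $n=1$ is trivial, and $n=2$ is the construction above; the inductive step uses that from the O-sequence data of $\mathcal{H}$ one can extract a sequence of partial sums that still form valid lower-dimensional O-sequences to be realized on successive hyperplanes. The main obstacle --- the technical heart of the Geramita--Maroscia--Roberts argument --- is to control the short exact sequence
\[
0 \longrightarrow (I(Z) : x_n)_{t-1} \longrightarrow I(Z)_t \longrightarrow I(Z \cap H)_t \longrightarrow 0
\]
in every degree $t$: right-exactness (which converts the hyperplane slice's Hilbert function into $\Delta H_{R/I(Z)}$) must hold, and is secured by a vanishing of the relevant $h^1$ term in the same spirit as in the proof of Theorem \ref{CHTtheorem}. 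This is what governs the delicate choice of both the hyperplanes and the point configurations placed on them.
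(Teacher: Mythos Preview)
Your $n=2$ argument is correct but takes a genuinely different route from the paper's. The paper proceeds via Macaulay's theorem to produce a lex monomial ideal $J\subseteq\field[x_1,x_2]$ with Hilbert function $\Delta\mathcal{H}$, and then applies the Geramita--Gregory--Roberts/Hartshorne lifting: each monomial generator $x_1^{a_1}x_2^{a_2}$ of $J$ is replaced by $\prod_{j}\prod_{i=0}^{a_j-1}(x_j-ix_0)$, and the resulting radical ideal cuts out the lattice points corresponding to monomials outside $J$. You instead bypass monomial ideals entirely: you analyze the shape of $\Delta\mathcal{H}$ directly (ramp then weakly decreasing), extract the reduction vector ${\bf d}$ by the formula $d_b=\max\{i:(\Delta\mathcal{H})_i\geq b+1\}-b+1$, verify $\operatorname{diag}({\bf d})=\Delta\mathcal{H}$ by a lattice-point count, and then invoke Theorem~\ref{CHTtheorem} via Exercise~\ref{diffOseqEx}. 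Both routes land on the same configuration (points on $s$ parallel lines with $d_0>\cdots>d_{s-1}$ points per line), so the end product is identical. Your approach has the virtue of staying entirely inside the paper's own combinatorial framework, but it trades the lifting machinery for dependence on Theorem~\ref{CHTtheorem}, whose equality case requires a cohomological argument. The paper's lifting approach is independent of that theorem and generalizes more transparently to arbitrary $n$ (the lifting formula works verbatim in $\field[x_1,\ldots,x_n]$).

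For general $n$, your $k$-configuration sketch is in the right spirit, and you are honest that the surjectivity of the restriction map in the hyperplane exact sequence is the crux. That is indeed the content one has to supply; as written it is a plan rather than a proof, but the paper's own treatment of general $n$ is also only indicated, so you are not behind on that front.
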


We give some idea how one can prove this, involving
monomial ideals and their liftings. The original proof, given in \cite{refGMR},
is somewhat different.

\begin{definition}
Let $J \subseteq \field[x_1, x_2]$ be a homogeneous ideal
and let $\phi:\field[x_0, x_1, x_2]\to \field[x_1, x_2]$
be defined by $\phi(x_0)=0$ and $\phi(x_i)=x_i$ for $i>0$.
We say that $J$ \emph{lifts to $I \subseteq \field[x_0, x_1, x_2]$} if
\begin{itemize}
\item $I$ is a radical ideal in $\field[x_0, x_1, x_2]$;
\item $x_0$ is not a zero-divisor on $\field[x_0, x_1, x_2]/I$; and
\item $\phi(I)=J$.
\end{itemize} 
\end{definition}

If $\mathcal{H} = \{h_d\}_{d \geq 0}$ is a differentiable 0-dimensional O-sequence 
(with $n=2$), let $\Delta \mathcal{H} = \{e_d\}_{d \geq 0}$ be defined 
by $e_0 = 1, e_d = h_d-h_{d-1}$ for $d \geq 1$.  By Macaulay's Theorem, 
we know there exists an ideal $J \subseteq \field[x_1, x_2]$ generated by some monomials 
$\{x_1^{m_{1\,0}}x_2^{m_{2\,0}},\ldots,x_1^{m_{1r}}x_2^{m_{2r}}\}$
such that $H_{\field[x_1, x_2]/J} = \Delta \mathcal{H}$. 
Since the O-sequence is 0-dimensional, we know that
among the generators are pure powers of $x_1$ and $x_2$.
In fact, Macaulay proved more
than the statement we gave above of Macaulay's Theorem; he showed that 
$J$ can be taken to be a lex ideal,
which means that whenever $x_1^ix_2^j\in J$ with $i>0$, then 
$x_1^{i-1}x_2^{j+1}\in J$. (Here we mean lex with respect to the monomial ordering
with $x_2>x_1$, which is nonstandard, but which is needed to be consistent
with the exposition in \cite{refGGR}.)
Since in our case $J$ is not only lex but 
contains pure powers of $x_1$ and $x_2$, we may assume that 
$m_{2i}=i$ and $m_{1i}>m_{1\,i+1}$ for all $i$, with $m_{1r}=0$.
Geramita--Gregory--Roberts \cite{refGGR} and Hartshorne \cite{refHt} 
showed that $J$ lifts to an ideal $I$ which is the ideal of a 
finite set of points whose coordinates are
given by the exponent vectors $(m_{1i},m_{2i})$.
To explain this in more detail we introduce some notation and bijections.

To an element $\alpha = (a_1, a_2) \in \mathbb N^2$ we associate the 
point $\overline{\alpha} = [1:a_1:a_2] \in \mathbb P^2$.  Further, for each 
monomial $g = x^{\alpha} = x_1^{a_1}x_2^{a_2}$ we associate 
$$\overline{g} = \prod_{j=1}^2\left(\prod_{i=0}^{a_j-1}(x_j-ix_0)\right).$$
Observe that $\overline{g}$ is homogeneous.  

Now, since $J$ is a monomial ideal, the set ${\mathcal M} \setminus N$, where ${\mathcal M}$ 
denotes the monomials in $\field[x_1, x_2]$ (including 1) and $N$ denotes 
the set of monomials in $J$, gives representatives for a $\field$-basis of 
$\field[x_1, x_2]/J$.  Let $\overline{{\mathcal M}}$ denote the set of all points 
$\overline{\alpha} = \overline{(a_1, a_2)} \in \mathbb P^2$ such that 
$x_1^{a_1}x_2^{a_2} \in {\mathcal M}$.  It can then be shown (see \cite{refGGR} 
for full details) that $J$ lifts to $I = (\overline{g_i})$, where $\{g_i\}$ is 
the minimal generating set for $J$.  The key step in the proof is to show that
$$I = \{f \in \field[x_0, x_1, x_2] : f(\overline{\alpha}) = 
0 \,\,\, \mbox{for all $\overline{\alpha} \in \overline{{\mathcal M}}$}\}.$$
Note that $I$ is the ideal of a 
finite set of points which can be chosen as in Exercise \ref{diffOseqEx}.

\begin{example}
Consider $\mathcal{H} = (1, 3, 6, 9, 10, 11,11,11,\ldots)$. This is 
a differentiable 0-dimensional O-sequence
with $\Delta \mathcal{H} = (1, 2, 3, 3, 1, 1,0,0,\ldots)$.  To find a finite set of points 
$\mathbb X$ where $H_{R/I(\mathbb X)} = \mathcal{H}$ 
we consider the monomial ideal $J = (x_2^3, x_1^2x_2^2, x_1^3x_2, x_1^6)$.  
We can visualize the monomials in ${\mathcal M} \setminus N$ as the circles in the 
$x_1x_2$-plane in Figure \figthree, where the monomial $x_1^{a_1}x_2^{a_2}$ is 
represented by the pair $(a_1, a_2)$.  The open circles represent the generators of $J$.

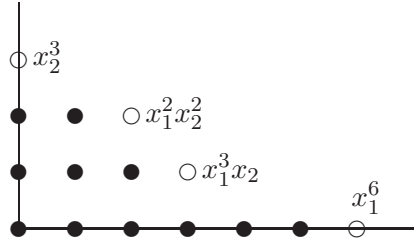
\begin{figure}[h]
\caption{A monomial ideal.}
\setlength{\unitlength}{0.75cm}
\hspace{.15in}
\begin{picture}(6,4.5)
\multiput(0,0)(1,0){6}{\circle*{.3}}
\put(0,0){\line(1,0){7}}
\put(0,0){\line(0,1){4}}
\put(0,3){\circle{.3}}
\put(.25,2.9){$x_2^3$}
\multiput(0,1)(1,0){3}{\circle*{.3}}
\put(2,2){\circle{.3}}
\put(2.25,1.9){$x_1^2x_2^2$}
\multiput(0,2)(1,0){2}{\circle*{.3}}
\put(3,1){\circle{.3}}
\put(3.25,.9){$x_1^3x_2$}
\put(6,0){\circle{.3}}
\put(5.9,.4){$x_1^6$}
\end{picture}
\end{figure}

Let $\mathbb X$ be the set consisting of the points in $\mathbb P^2$ which are in $\overline{{\mathcal M}}$; these points are
$[1:0:0]$,
$[1:1:0]$,
$[1:2:0]$,
$[1:3:0]$,
$[1:4:0]$,
$[1:5:0]$,
$[1:0:1]$,
$[1:1:1]$,
$[1:2:1]$,
$[1:0:2]$,
$[1:1:2]$.
The ideal $I = I(\mathbb X)$ is generated by:
\begin{align*}
\overline{x_2^3} & = x_2(x_2-x_0)(x_2-2x_0) \\
\overline{x_1^2x_2^2} & = x_1(x_1-x_0)x_2(x_2-x_0) \\
\overline{x_1^3x_2} & = x_1(x_1-x_0)(x_1-2x_0)x_2 \\
\overline{x_1^6} & = x_1(x_1-x_0)(x_1-2x_0)(x_1-3x_0)(x_1-4x_0)(x_1-5x_0).
\end{align*}
We have that $J$ lifts to $I$.  Observe that $\mathbb X$ is a configuration of points
contained in a union of three ``horizontal'' lines in $\pr 2$, with 6 points on the 
bottom line, 3 on the middle line and 2 on the top line.
\end{example}

The method used in the above example will work in general.  Given 
a differentiable 0-dimensional O-sequence $\mathcal{H}$ where 
$\Delta \mathcal{H} = (h_0, h_1, h_2, \ldots)$, then one applies 
the steps above using the ideal $J$ found by setting the degree $t$ 
monomials of ${\mathcal M} \setminus  N$ to be the first $h_t$ monomials in $R$ 
using lexicographic ordering.

\begin{example}
Suppose $h=(1,3,5,5,5,\ldots)$. This is a differentiable 0-dimensional O-sequence.
Using the methods of the previous section, one can check that it is the Hilbert function of
5 points in $\pr2$, 2 on one line, and three on another line, none where the lines meet.
\end{example}

\begin{example}
Suppose $h=(1,3,2,0,0\ldots)$. This is a 0-dimensional O-sequence but it is not
differentiable. It is the Hilbert function of $R/I$ for
$R=\field[x,y,z]$ and $I=(x^2,xy,x^2,y^2)+(x,y,z)^3$.
\end{example}

{\vskip\baselineskip\noindent\Large\bf Exercises}

\setcounter{theorem}{0}

\begin{exercise}\label{exO-seq1}
Let $I=I(3p)$ for a point $p\in\pr2$.
Find a set of points $p_1,\ldots,p_r\in\pr2$ such that
$H_{R/I}=H_{R/J}$ where $J=I(p_1+\cdots+p_r)$.
\end{exercise}

\begin{exercise}\label{exO-seq2}
Show that ${\bf d}$ in the statement of Theorem \ref{GMRthm}
is unique.
\end{exercise}

\section{B\'ezout's theorem in $\pr2$ and applications}\label{BezoutSection}

We start with some intuition as to what B\'ezout's Theorem is all about.
One way to think about it is as a generalization of the Fundamental Theorem of 
Algebra (FTA). One can state FTA as follows:

\begin{theorem}[FTA]\label{fta}
A nonconstant polynomial $f\in{\bf C}[x]$ of degree $d$ has exactly $d$ roots, counted with
multiplicity, where ${\bf C}$ is the field of complex numbers.
\end{theorem}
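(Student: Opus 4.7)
The plan is to proceed by induction on $d$, with essentially all of the content concentrated in the existence of a single complex root; once that is in hand, polynomial division and the inductive hypothesis deliver the full factorization and hence the count of roots with multiplicity.

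For the base case $d=1$, any $f=ax+b$ with $a\neq 0$ has the unique root $x=-b/a$, counted once, so the statement holds. For the inductive step, assume the theorem for all degrees less than $d$ and let $f\in{\bf C}[x]$ have degree $d\geq 2$. Granting the existence of some $\alpha\in{\bf C}$ with $f(\alpha)=0$ (the hard step, addressed below), polynomial long division in ${\bf C}[x]$ yields $f(x)=(x-\alpha)g(x)$ with $\deg(g)=d-1$. Applying the inductive hypothesis to $g$ produces $d-1$ roots with multiplicity, which together with $\alpha$ account for $d$. The multiplicity of a root $\alpha$ of $f$, defined as the largest $m$ with $(x-\alpha)^m$ dividing $f$, is automatically respected by this procedure: if $\alpha$ happens to be a root of $g$ as well, it reappears in the factorization of $g$ and is counted again, yielding overall a factorization $f=c\prod_i(x-\alpha_i)^{m_i}$ with the $\alpha_i$ distinct and $\sum_i m_i=d$ by comparing leading coefficients and degrees.

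The genuinely hard part, and the only place where ${\bf C}$ (rather than an arbitrary field) is essential, is the existence of at least one root in ${\bf C}$ for any nonconstant $f\in{\bf C}[x]$. The plan is to invoke Liouville's theorem from complex analysis. Suppose for contradiction that $f$ never vanishes on ${\bf C}$. Then $h(z):=1/f(z)$ is a well-defined entire function. Writing $f(z)=a_dz^d+a_{d-1}z^{d-1}+\cdots+a_0$ with $a_d\neq 0$ and factoring out $z^d$, one sees that $|f(z)|\to\infty$ as $|z|\to\infty$, so $|h(z)|\to 0$; in particular there is some $R>0$ with $|h(z)|<1$ whenever $|z|>R$. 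On the compact disk $|z|\leq R$, $h$ is continuous and hence bounded, so $h$ is a bounded entire function. By Liouville's theorem, $h$ must be constant, whence $f$ is constant, contradicting $d\geq 1$. Thus $f$ has a root, completing the induction.

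The only real obstacle is cleanly invoking the complex-analytic machinery (the behavior of $|f(z)|$ at infinity and Liouville's theorem); with those results available, the remainder of the argument is essentially bookkeeping via the division algorithm. An alternative avoiding complex analysis would be the standard algebraic argument combining Sylow theory with the intermediate value theorem on ${\bf R}$, but for this expository context Liouville is cleaner.
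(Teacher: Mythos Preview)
Your proof is correct and is one of the standard arguments for the Fundamental Theorem of Algebra: Liouville's theorem gives existence of a root, and then induction via the division algorithm handles the count with multiplicity.

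However, there is nothing to compare against: the paper does not prove this theorem. It is stated purely as a classical result to motivate B\'ezout's theorem --- the point of the surrounding discussion is that B\'ezout generalizes FTA by replacing the line (where a degree-$d$ form meets a linear form in $d$ points) with an arbitrary plane curve. In that expository context the paper simply takes FTA as known, so your proof, while perfectly sound, goes well beyond what the paper intends or requires here.
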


Replacing ${\bf C}$ by any algebraically closed field $\field$,
a simplified version of B\'ezout's Theorem says the following:

\begin{theorem}[Baby B\'ezout]\label{babybezout}
Let $F\in\field[\pr2]$ be a nonconstant form of degree $d$ and let $L$ be a linear form.
Then either the restriction of $F$ to $L$ has exactly $d$ roots (counted with multiplicity), 
or $L$ divides $F$. 
\end{theorem}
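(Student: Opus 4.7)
The plan is to reduce Baby B\'ezout to the Fundamental Theorem of Algebra by restricting the form $F$ to $L$ and then working with the resulting binary form.

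First, I would perform a linear change of coordinates on $\pr2$ so that $L$ is the coordinate line $x_2=0$. Since any two lines in $\pr2$ are related by an element of $\operatorname{PGL}_3(\field)$, and such a change of coordinates preserves both the degree of $F$ and the divisibility relation $L\mid F$, there is no loss of generality. Under this reduction, the restriction of $F$ to $L$ is the binary form $f(x_0,x_1)=F(x_0,x_1,0)\in\field[x_0,x_1]$, obtained by setting $x_2=0$. This $f$ is homogeneous of degree $d$ (possibly zero as a polynomial).

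Next I would analyze the two cases. If $f$ is identically zero as a polynomial, then every monomial in $F$ is divisible by $x_2$, so $x_2\mid F$ and we are in the second alternative of the theorem. Otherwise $f$ is a nonzero binary form of degree $d$, and the goal becomes to show that $f$ has exactly $d$ roots on $L\cong\pr1$, counted with multiplicity. To do this, factor out the largest power of $x_1$ dividing $f$: write $f=x_1^k h(x_0,x_1)$ where $h$ is homogeneous of degree $d-k$ and $h(1,0)\neq 0$ (i.e.\ $x_1\nmid h$). The point $[0:1]\in\pr1$ then contributes a root of multiplicity $k$. Dehomogenize $h$ by setting $x_1=1$: the resulting polynomial $\tilde h(x_0):=h(x_0,1)\in\field[x_0]$ has degree exactly $d-k$, because the coefficient of $x_0^{d-k}$ in $h$ is $h(1,0)\ne 0$. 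Since $\field$ is algebraically closed, the Fundamental Theorem of Algebra (Theorem \ref{fta}) gives $\tilde h(x_0)=c\prod_{i}(x_0-\alpha_i)^{e_i}$ with $\sum_i e_i=d-k$.

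Rehomogenizing recovers $h(x_0,x_1)=c\prod_i(x_0-\alpha_i x_1)^{e_i}$, so $f=c\, x_1^k\prod_i(x_0-\alpha_i x_1)^{e_i}$ is a product of $d$ linear factors (with multiplicity), corresponding to the points $[0:1]$ and $[\alpha_i:1]$ on $L$. Defining the multiplicity of a root of $f$ at a point of $L$ by the exponent of the corresponding linear factor, this gives exactly $d$ roots counted with multiplicity, completing the proof. The main obstacle is simply the bookkeeping involved in setting up a coherent notion of \emph{root with multiplicity} on the projective line $L\cong\pr1$ and verifying that dehomogenizing and rehomogenizing is compatible with it; once one accepts that a nonzero binary form of degree $d$ over an algebraically closed field factors into $d$ linear factors (which is exactly FTA in disguise), the statement is immediate.
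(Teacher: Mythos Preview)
Your proof is correct. The paper itself does not prove Baby B\'ezout; it is stated without proof as a motivational warm-up for the full B\'ezout theorem (Theorem~\ref{bigbezout}), the paper merely remarking that it is a generalization of the Fundamental Theorem of Algebra. Your argument---reducing to $L=\{x_2=0\}$, observing that $F|_L=0$ forces $x_2\mid F$, and otherwise factoring the nonzero binary form $F(x_0,x_1,0)$ into $d$ linear factors via FTA---is exactly the standard way to flesh this out, and the bookkeeping (factoring out $x_1^k$ to handle the point at infinity before dehomogenizing) is done carefully.
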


The full version of B\'ezout's Theorem (see below) says that forms $F,G\in \field[\pr2]$ of degrees
$d_1,d_2>0$ have exactly $d_1d_2$ common zeros (counted with multiplicity)
unless $F$ and $G$ have a common factor of positive degree.
The rigorous statement requires dealing with how to count common zeros correctly.
So let $0\neq F\in\field[\pr2]=\field[x_0,x_1,x_2]$ be homogeneous.
The multiplicity $\mult_p(F)$ of $F$ at a point $p\in\pr2$ is the largest $m$ such that
$F\in I(p)^m$, where we regard $I(p)^0$ as being $R$.
If projective coordinates are chosen so that $p=(1,0,0)$, then
$\mult_p(F)$ is the degree of a term of least degree in $F(1,x_1,x_2)$.
The homogeneous component $h$ of $F(1,x_1,x_2)$ of least degree
factors as a product of powers of homogeneous linear factors $l_i$;
i.e., $h=l_1^{m_1}\cdots l_s^{m_s}$.
The factors $l_i$ are the \emph{tangents} to $F$ at $p$,
and the exponent $m_i$ is the multiplicity of $l_i$.

Suppose $F$ and $G$ are homogeneous polynomials which do not have a common
factor vanishing at $p$. For each $m\geq1$, the $\field$-vector space dimension
of the $t$th homogeneous component of $R/((F,G)+I(p)^m)$ is
equal to some limiting value $\Lambda_m(F,G,p)$ for all $t\gg0$.
For all $m\gg0$, $\Lambda_m(F,G,p)$ also attains a limiting value, $\Lambda(F,G,p)$.
We define the \emph{intersection multiplicity} $I_p(F,G)$ to be $\Lambda(F,G,p)$.
Since $F$ and $G$ determine 1-dimensional subschemes $C_F, C_G\subset\pr2$
which in turn determine $F$ and $G$,
we also will refer to $I_p(F,G)$ as $I_p(C_F,C_G)$.

Assume that $F$, $G$ and $H$ are homogeneous polynomials which do not have a common
factor vanishing at $p$. Then some facts about intersection multiplicities are (see \cite{refHr} or \cite{refF}):
\begin{description}
\item{(a)} $I_p(F,G)\geq \mult_p(F)\mult_p(G)$, where equality holds if and only if
$F$ and $G$ have no tangent in common at $p$;
\item{(b)} $I_p(F,GH)=I_p(F,G)+I_p(F,H)$;
\item{(c)} intersection multiplicities are invariant under projective linear homogeneous changes of coordinates; and:
\end{description}

\begin{theorem}[B\'ezout's Theorem]\label{bigbezout}
If $F, G\in\field[\pr2]$ are forms which have no common factor of positive degree, then 
$$(\deg(F))(\deg(G))=\sum_{p\in\pr2}I_p(F,G).$$
\end{theorem}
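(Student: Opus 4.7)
The plan is to compute the Hilbert polynomial of $R/(F,G)$ in two different ways, where $R=\field[\pr2]$ and $d_1=\deg F$, $d_2=\deg G$. Since $R$ is a UFD and $F,G$ share no nonconstant common factor, the pair $F,G$ is a regular sequence in $R$: $F$ is a nonzerodivisor, and if $GH\in(F)$ the hypothesis that no irreducible factor of $F$ divides $G$ forces every irreducible factor of $F$ (with multiplicity) to divide $H$, so $H\in(F)$. This yields a graded Koszul exact sequence
$$0\to R\xrightarrow{(G,-F)}R\oplus R\xrightarrow{(F,G)}R\to R/(F,G)\to 0,$$
where the leftmost $R$ is shifted by $d_1+d_2$ and the middle summands by $d_1$ and $d_2$. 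Summing Hilbert functions with alternating signs gives
$$H_{R/(F,G)}(t)=\binom{t+2}{2}-\binom{t-d_1+2}{2}-\binom{t-d_2+2}{2}+\binom{t-d_1-d_2+2}{2},$$
and a direct expansion shows this is the constant $d_1d_2$ for $t\geq d_1+d_2-2$.

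In particular, $\operatorname{Proj}(R/(F,G))$ is zero-dimensional, so $V(F)\cap V(G)\subset\pr 2$ is a finite set $\{p_1,\ldots,p_k\}$, and $I_p(F,G)=0$ for any other $p$ (for such $p$ the scheme $V((F,G)+I(p)^m)$ is empty in $\pr 2$, so $(F,G)+I(p)^m$ contains a power of $M$, which kills the Hilbert function in high degree). Fix an irredundant primary decomposition $(F,G)=Q_1\cap\cdots\cap Q_k\cap Q'$ with $Q_i$ being $I(p_i)$-primary and $Q'$ either absent or $M$-primary. Any two of the ideals $Q_1,\ldots,Q_k,Q'$ have radicals whose zero sets in $\pr 2$ are disjoint, so the sum of any two of them contains a power of $M$ and therefore equals $R_t$ for $t\gg 0$. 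Applying the standard exact sequence
$$0\to R/(A\cap B)\to R/A\oplus R/B\to R/(A+B)\to 0$$
inductively yields $d_1d_2=H_{R/(F,G)}(t)=\sum_i H_{R/Q_i}(t)+H_{R/Q'}(t)$ for $t\gg 0$. The $Q'$ term vanishes in high degree since $M^N\subseteq Q'$ for some $N$, and each $H_{R/Q_i}(t)$ stabilizes to an integer $\ell_i$ since $\operatorname{Proj}(R/Q_i)=\{p_i\}$ is zero-dimensional. Hence $d_1d_2=\sum_i\ell_i$.

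To conclude, I identify $\ell_i=I_{p_i}(F,G)$. Choose $m$ so large that $I(p_i)^m\subseteq Q_i$; then $(F,G)+I(p_i)^m\subseteq Q_i$ is immediate. For the reverse inclusion in high degree, write $J_i=\bigcap_{j\neq i}Q_j\cap Q'$, so $(F,G)=Q_i\cap J_i$. Since $V(I(p_i))$ and $V(J_i)$ are disjoint in $\pr 2$, the sum $I(p_i)^m+J_i$ contains a power of $M$. Thus for $t\gg 0$ and any $h\in Q_{i,t}$ we may write $h=r+s$ with $r\in(I(p_i)^m)_t$ and $s\in J_{i,t}$; but then $s=h-r\in Q_i\cap J_i=(F,G)$, so $h\in((F,G)+I(p_i)^m)_t$. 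Therefore $((F,G)+I(p_i)^m)_t=Q_{i,t}$ for $t\gg 0$, giving $\Lambda_m(F,G,p_i)=\ell_i$ for all $m$ sufficiently large, and hence $I_{p_i}(F,G)=\ell_i$. Summing, $\sum_p I_p(F,G)=\sum_i\ell_i=d_1d_2$, as desired.

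The main technical delicacy is the possible $M$-primary embedded component $Q'$ of $(F,G)$: it contributes to the ungraded ideal but is invisible in $\pr 2$, and must be separated from the honest ``point'' contributions. Passing to high degree $t$ resolves this uniformly, since every $M$-primary ideal eventually equals $R_t$. More geometrically, one could recognize $d_1d_2$ as $\chi(\mathcal O_Z)$ for $Z=V(F,G)$ and invoke that $\chi$ of a zero-dimensional scheme equals its total length; the primary-decomposition argument above is the elementary unpacking of exactly that fact within the graded setting.
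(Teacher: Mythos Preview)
The paper does not actually prove B\'ezout's Theorem; it is stated as a fact with references to \cite{refHr} and \cite{refF}. So there is no in-paper proof to compare against.

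Your argument is correct and is essentially the Hilbert-polynomial proof (compare Hartshorne, Chapter~I, Theorem~7.7), adapted to the paper's definition of $I_p(F,G)$ via the stabilization $\Lambda_m\to\Lambda$. The key steps---regularity of the sequence $(F,G)$ in the UFD $R$, the Koszul resolution giving Hilbert polynomial $d_1d_2$, and the primary decomposition splitting this into local contributions---are all sound. The only place where the paper's specific definition requires extra work is the identification $((F,G)+I(p_i)^m)_t=(Q_i)_t$ for $t\gg 0$ and $m\gg 0$, and your argument for this via $I(p_i)^m+J_i\supseteq M^N$ is correct.

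One simplification you could make: since $R$ is Cohen--Macaulay and $(F,G)$ is generated by a regular sequence of length~$2$, Macaulay's unmixedness theorem guarantees that $(F,G)$ has no embedded primes. As $M$ has height~$3$, it cannot be associated to $(F,G)$, so your $M$-primary component $Q'$ never actually occurs. Allowing for it does no harm, but you may drop it and the final paragraph's caveat along with it.
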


\begin{example}\label{AlphaAndBezout}
Let $Z=m_1p_1+\cdots+m_rp_r$, where $p_1,\ldots,p_r\in\pr2$ 
are distinct points and each $m_i$ is a positive integer.
Let $C\subset\pr2$ be an irreducible curve of degree $d$
such that $\mult_{p_i}(C)=e_i$ for each $i$ (i.e., $\mult_{p_i}(G)=e_i$
where $G$ is the form defining $C$).
Say $0\neq F\in I(Z)_t$, so $\mult_{p_i}(F)\geq m_i$ for all $i$.
If $\sum_im_ie_i>td$, then
$\sum_iI_{p_i}(F,G)\geq \sum_i\mult_{p_i}(F)\mult_{p_i}(G)\geq\sum_im_ie_i > td$ so by B\'ezout's Theorem,
$G$ and $F$ have a common factor, but $G$ is irreducible, so
$G|F$. Thus $H\in I((m_1-e_1)p_1+\cdots+(m_r-e_r)p_r)$,
where $H=F/G$. 

We can apply this to get bounds on $\alpha(I(Z))$.
For example, let $L_1,L_2,L_3,L_4\subset\pr2$ be lines no three of which meet at a point.
We will regard $L_i$ as denoting either the line itself or the linear homogeneous form that defines the line,
depending on context.
Let $p_{ij}=L_i\cap L_j$ for $i\neq j$, so $\{p_{ij}\}$ are the six points of pair-wise intersections of the lines.
Let $Z=\sum_{ij}3p_{ij}$. It is easy to check that $(L_1L_2L_3)^2L_4$ is in $I(p_{ij})^3$ for each of the
six points. Thus $(L_1L_2L_3)^2L_4\in I(Z)_7$ so $\alpha(I(Z))\leq 7$.
On the other hand, assume we have $0\neq F\in I(Z)_6$.
There are three points where both $F$ and $L_i$ vanish, with $F$ 
having multiplicity at least 3 at each and $L_i$ having multiplicity 1.
Since $3\cdot (3\cdot 1)>\deg(F)\deg(L_i)=6$, then $L_i|F$. This is true for all
$i$, so $L_1L_2L_3L_4|F$. Let $H=F/(L_1L_2L_3L_4)$.
Then $\deg(H)=2$ and $\mult_{p_{ij}}(H)\geq 1$.
Now $3\cdot (1\cdot 1)>\deg(H)\deg(L_i)=2$, so again
$L_1\cdots L_4|H$, but this is impossible since $\deg(H)<\deg(L_1L_2L_3L_4)$.
Thus $H$ and therefore $F$ must be 0,
so $\alpha(I(Z))>6$ and hence $\alpha(I(Z))=7$.
(Note that this is in agreement with the result of Exercise \ref{starConfigEx}.)
\end{example}

\begin{example}
Let $I=I(p_1+p_2+p_3)$ for three noncolinear points of $\pr2$.
We show that $\gamma(I)=3/2$. Consider $I^{(m)}=I(m(p_1+p_2+p_3))$.
Assume $m=2s$ is even, and suppose $0\neq F\in (I^{(m)})_{3s-1}$.
Note that $F$ vanishes to order at least $m$ at each of two points
for any line $L_{ij}$ through two of the points $p_i, p_j$, $i\neq j$.
Since $2m=4s>3s-1$, this means by B\'ezout that the linear forms (also denoted $L_{ij}$)
defining the lines are factors of
$F$. Dividing $F$ by $L_{12}L_{13}L_{23}$ we obtain a form $G$ of degree $3(s-1)-1$
in $I^{(m-2)}$. The same argument applies: $L_{12}L_{13}L_{23}$ must divide $G$.
Eventually we obtain a form of degree 2 divisible by $L_{12}L_{13}L_{23}$, which is impossible.
Thus $F=0$, and $\alpha(I^{(m)})> \frac{3m}{2}-1$. Since $(L_{12}L_{13}L_{23})^s\in I^{(m)}$,
we see that $\alpha(I^{(m)})\leq \frac{3m}{2}$, thus $\alpha(I^{(m)})=\frac{3m}{2}$, and hence
$\gamma(I)=\lim_{m\to\infty}\alpha(I^{(m)})/m=3/2$.
\end{example}

{\vskip\baselineskip\noindent\Large\bf Exercises}

\setcounter{theorem}{0}

\begin{exercise}\label{intmultEx1}
Show that $I_p(F,G)=0$ if either $F$ or $G$ does not vanish at $p$.
\end{exercise}

\begin{exercise}\label{intmultEx2}
Let $p=(1,0,0)$, $F=x_1x_0-x_2^2$ and $G=x_1x_0^2-x_2^3$. Compute $I_p(F,G)$
and verify that $\sum_{p\in\pr2}I_p(F,G)=\deg(F)\deg(G)$ by explicit computation.
\end{exercise}

\begin{exercise}\label{intmultEx3}
Consider the $\binom{s}{2}$ points of pairwise intersection of $s$ distinct lines in $\pr2$,
no three of which meet at a point. Let $I$ be the radical ideal of the points.
Mimic Example \ref{AlphaAndBezout} to 
show that $\alpha(I^{(m)})=ms/2$ if $m$ is even, and $\alpha(I^{(m)})=(m+1)s/2-1$ if $m$ is odd.
\end{exercise}

\begin{exercise}\label{gammafor4pts}
Let $I=I(p_1+p_2+p_3+p_4)$ for four points of $\pr2$, no three of which are colinear.
Show that $\gamma(I)=2$. 
\end{exercise}

\begin{exercise}\label{gammafor5pts}
Let $I=I(p_1+p_2+p_3+p_4+p_5)$ for five points of $\pr2$, no three of which are colinear.
Show that $\gamma(I)=2$. 
\end{exercise}

\begin{exercise}\label{6ptsNotOnaConic}
Show that there exist 6 points of $\pr2$ which do not all lie on any conic, and no three of which are colinear.
\end{exercise}

\begin{exercise}\label{gammafor6pts}
Let $I=I(p_1+\cdots+p_6)$ for six points of $\pr2$, no three of which are colinear and which do not all
lie on a conic (such point sets exist by Exercise \ref{6ptsNotOnaConic}).
Show that $\gamma(I)=12/5$. 
\end{exercise}

\begin{exercise}\label{7generalpts}
Show that there exist 7 points of $\pr2$ no three of which are colinear and no six of which
lie on a conic.
\end{exercise}

\begin{exercise}\label{gammafor7pts}
Let $I=I(p_1+\cdots+p_7)$ for seven points of $\pr2$, no three of which are colinear and no six of which
lie on a conic (such point sets exist by Exercise \ref{7generalpts}). Show that $\gamma(I)=21/8$. 
\end{exercise}

\begin{exercise}\label{9generalpts}
Given 9 distinct points $p_i\in\pr2$ on an irreducible cubic $C$ such that $\mult_{p_i}(C)=1$
for all $i$, show that $\gamma(I)=3$ for $I=I(p_1+\cdots+p_9)$. 
\end{exercise}

\section{Divisors, global sections, the divisor class group and fat points}\label{Weyl}

For this section, our references are \cite{refHr}, \cite{refN2}, \cite{refdv},
\cite{refKrakow} and \cite{refDuke}.
Given any finite set of distinct points $p_1,\ldots,p_r\in\pr2$, there is a projective algebraic surface
$X$, a projective morphism $\pi:X\to\pr2$ (obtained by blowing up the points $p_i$)
such that each $\pi^{-1}(p_i)=E_i$ is a smooth rational curve
and such that $\pi$ induces an isomorphism $X\setminus\cup_iE_i\to \pr2\setminus\{p_1,\ldots,p_r\}$.

The divisor class group $\operatorname{Cl}(X)$ (of divisors modulo linear equivalence, where a divisor
is an element of the free abelian group on the irreducible curves on $X$) is the free group with basis
$e_0,e_1,\ldots,e_r$, where $e_0=[E_0]$ is the class of the pullback $E_0$ to $X$ of a line $L\subset\pr2$,
and $e_i=[E_i]$ for $i>0$ is the class of the curve $E_i$. The group $\operatorname{Cl}(X)$ comes
with a bilinear form, called the intersection form, defined as $-e_0^2=e_i^2=-1$ for all $i>0$, and 
$e_i\cdot e_j=0$ for $i\neq j$. An important element, known as the canonical class,
is $K_X=-3e_0+e_1+\cdots+e_r$. If $C$ and $D$ are divisors, we define $C\cdot D=[C]\cdot [D]$.
If $C$ and $D$ are prime divisors meeting transversely, then $C\cdot D$ is just the number of points of intersection
of $C$ with $D$.

If $D$ is a divisor on $X$, its class can be written as $[D]=de_0-\sum_im_ie_i$ for some integers $d$ and $m_i$.
Associated to $D$ is an invertible sheaf $\OO_X(D)$. The space of global sections of this sheaf 
is a finite dimensional $\field$-vector space, denoted $\Gamma(\OO_X(D))$ and also $H^0(X,\OO_X(D))$.
The dimension of this vector space is denoted $h^0(X,\OO_X(D))$; if $[D]=[D']$, then
$h^0(X,\OO_X(D))=h^0(X,\OO_X(D'))$.

In case $D=dE_0-\sum_im_iE_i$ such that each $m_i\geq0$, then there is a canonical identification of
$H^0(X,\OO_X(D))$ with $I(m_1p_1+\cdots+m_rp_r)_d$ \cite[Proposition IV.1.1]{refKrakow}. 
Thus techniques for computing
$h^0(X,\OO_X(D))$ can be applied to computing the Hilbert function of $m_1p_1+\cdots+m_rp_r$.
One important tool is the theorem of Riemann-Roch for surfaces; see Exercise \ref{RRexercise}.
B\'ezout's Theorem also has a natural interpretation in this context. If $C$ and $D$ are effective divisors
such that $[C]=c_0e_0-c_1e_1-\cdots-c_re_r$ and $[D]=d_0e_0-d_1e_1-\cdots-d_re_r$,
then $C\cdot D=c_0d_0-c_1d_1-\cdots-c_rd_r$; if this is negative then $C$ and $D$ have a common 
component. In particular, if $C$ is a prime divisor, then $C$ itself is the common component,
hence $D-C$ is effective.

Another important technique involves a group action on $\operatorname{Cl}(X)$
related to the Cremona group of birational transformations of the plane. Given $\pi:X\to\pr2$ as above,
there can exist morphisms $\pi':X\to\pr2$ obtained by blowing up other points (possibly infinitely near)
$p_1',\ldots,p_r'\in\pr2$. The composition $\pi'\pi^{-1}$, defined away from the points $p_i$,
is a birational transformation of $\pr2$, hence an element of the Cremona group
(named for Luigi Cremona, after whom there is named a street in Rome near the Colosseum).
We thus have a second basis $e_0',e_1',\ldots,e_r'$ of $\operatorname{Cl}(X)$
corresponding to curves $E_i'$. In particular, we can write $dE_0-\sum_im_iE_i$ as $d'E_0'-\sum_im_i'E_i'$. 
The change of basis transformation from the basis $e_i$ to the basis $e_i'$ is always an element
of a particular group, now known as the Weyl group, $W_r$
(we give generators $s_i$ for $W_r$ below). 
For $r<9$, $W_r$ is finite, but it is infinite for all $r\geq9$. 

\begin{example}
Consider the quadratic Cremona transformation on $\pr2$, defined away from
$x_0x_1x_2=0$ as $Q:(a,b,c)\mapsto(1/a,1/b,1/c)$. Alternatively, one can define it
at all points of $\pr2$ except $(1,0,0)$, $(0,1,0)$ and $(0,0,1)$ as
$(a,b,c)\mapsto (bc,ac,ab)$. It can also be obtained by as $\pi'\pi^{-1}$, where
$\pi:X\to\pr2$ is the morphism given by blowing up the points $(1,0,0)$, $(0,1,0)$ and $(0,0,1)$
and $\pi':X\to \pr2$ contracts the proper transforms of the lines through pairs of those points.
More generally one can define the quadratic transform at any three noncolinear points,
by blowing them up and blowing down the proper transforms of the lines through
pairs of the 3 points. An important theorem announced by M.\ Noether 
(but whose proof was felt to be incomplete), is that the Cremona group for $\pr2$ is generated
by invertible linear transformations of the plane and quadratic transformations \cite{refA}.
\end{example}

Let $n_0=e_0-e_1-e_2-e_3$ and let $n_i=e_1-e_{i+1}$ for $i=1,\ldots,r-1$.
For any $x\in\operatorname{Cl}(X)$ and any $0\leq i<r$, let $s_i(x)=x+(x\cdot n_i)n_i$.
Then $W_r$ is defined to be the group generated by $s_i\in W_r$.
When $i>0$, the element $s_i$ just transposes $e_i$ and $e_{i+1}$, so
$\{s_1,\ldots,s_{r-1}\}$ generates the group of permutations 
on the set $\{e_1,\ldots,e_r\}$. When the points
$p_1,p_2,p_3$ are not colinear,
the element $s_0$ corresponds to the quadratic transformation
$Q:(a,b,c)\mapsto(\frac{1}{a},\frac{1}{b},\frac{1}{c})$. 
Note that $s_0(e_1)=e_0-e_2-e_3$, $s_0(e_2)=e_0-e_1-e_3$, and $s_0(e_3)=e_0-e_1-e_2$:
blowing up $p_1$, $p_2$ and $p_3$, to get $E_1,E_2,E_3$ and blowing down the
proper transforms of the line through $p_2$ and $p_3$, the line through $p_1$ and $p_3$
and the line through $p_1$ and $p_2$ is precisely $Q$. 
(Note also that $s_0(e_0)=2e_0-e_1-e_2-e_3$
and a line $a_0x_0+a_1x_1+a_2x_2=0$ pulls back under $Q$ to 
$a_0/x_0+a_1/x_1+a_2/x_2=0$ which, by multiplying through by $x_0x_1x_2$ 
to clear the denominators is the same as $a_0x_1x_2+a_1x_0x_2+a_2x_0x_1=0$;
i.e., on the surface $X$ obtained by blowing up the coordinate vertices
we have $e_0'=2e_0-e_1-e_2-e_3$.)

When the points $p_i$ are sufficiently general (such as being generic,
meaning, say, that the projective coordinates $a_{ij}$ for each point $p_i=(a_{i0},a_{i1},a_{i2})$ are all nonzero,
and the ratios $\frac{a_{11}}{a_{10}}$, $\frac{a_{12}}{a_{10}}$, $\frac{a_{21}}{a_{20}}$,
$\frac{a_{22}}{a_{20}},\ldots,\frac{a_{r1}}{a_{r0}}$, $\frac{a_{r2}}{a_{r0}}$
are algebraically independent over the prime field of $\field$)
and given the surface $\pi:X\to\pr2$ obtained by blowing up the points $p_i$,
the birational morphisms $X\to\pr2$ (up to projective equivalence) are in one-to-one correspondence
with the elements of $W_r$. We denote by $\pi_w$ the morphism corresponding to $w$. 
The identity element $w$ corresponds to the
basis $\{e_0,e_1,\ldots,e_r\}$ obtained by blowing up the points $p_i$, and this gives $\pi$ since for $i>0$, 
$E_i$ is the unique effective divisor whose class is $e_i$. Contracting $E_r, E_{r-1},\ldots,E_1$ in order
gives $\pi$. Likewise, for any $w\in W_r$, the basis $e_i'=w(e_i)$ gives the sequence of curves $E_i'$
which must be contracted to define $\pi_w$.

Given a divisor $F=dE_0-\sum_im_iE_i$, we denote by $wF$ the divisor $d'E_0'-\sum_im_i'E_i'$ where
$w(de_0-\sum_im_ie_i)=d'e_0'-\sum_im_i'e_i'$. Since $w$ represents a change of basis,
we have $H^0(X,\OO_X(F))=H^0(X,\OO_X(wF))$ and thus $\dim I(\sum_im_ip_i)_d=\dim I(\sum_im_i'p_i')_{d'}$.
(The fact that $H^0(X,\OO_X(F))=H^0(X,\OO_X(wF))$ 
also shows that $I(\sum_im_ip_i)_d$ has an irreducible element if and only if 
$I(\sum_im_i'p_i')_{d'}$ does.)
But if the points $p_i$ are generic, so are the points $p_i'$ (up to projective equivalence), so 
$\dim I(\sum_im_ip_i)_d=\dim I(\sum_im_i'p_i)_{d'}$. 
(There is an automorphism $\phi:\field\to\field$ such that the coordinates of the points $p_i$
map to the coordinates of the points $p_i'$. This induces an invertible map 
$\Phi:I(\sum_im_i'p_i)_{d'}\to I(\sum_im_i'p_i')_{d'}$ such that if $a_i\in\field$ and $F_i\in I(\sum_im_i'p_i)_{d'}$,
then $\Phi(\sum_ia_iF_i)=\sum_i\phi(a_i)\Phi(F_i)$, from which it follows that 
$\dim I(\sum_im_i'p_i)_{d'}=\dim I(\sum_im_i'p_i)_{d'}$ and hence that 
$\dim I(\sum_im_ip_i)_d=\dim I(\sum_im_i'p_i)_{d'}$.)

\begin{example}\label{exccurves}
Let $p_1,\ldots,p_9$ be generic points of $\pr2$.
We show that 
$I(p_1+\cdots+p_5)_2$,
$I(2p_1+p_2+\cdots+p_7)_3$ and
$I(3p_1+2p_2+\cdots+2p_8)_6$
each are 1-dimensional, with basis given by an irreducible form.
In each case we have a homogeneous component of the form $I(\sum_im_ip_i)_d$.
It is enough to show that there is an element $w\in W_8$ such that
$w[F]=e_0-e_1-e_2$, where $[F]=de_0-\sum_im_ie_i$.
But $s_0(2e_0-e_1-\cdots-e_5)=e_0-e_4-e_5$ and we apply a permutation $\sigma$ to obtain
$\sigma(e_0-e_4-e_5)=e_0-e_1-e_2$. Thus $\dim I(p_1+\cdots+p_5)_2=\dim I(p_1+p_2)_1$
and since $I(p_1+p_2)_1$ clearly has an irreducible element so does $I(p_1+\cdots+p_5)_2$.
The other cases with $r<9$ are similar.
The case that $r=9$ is also similar if we show that $I(p_1+\cdots+p_9)_3$
has an irreducible element.
\end{example}

\newpage
{\vskip\baselineskip\noindent\Large\bf Exercises}

\setcounter{theorem}{0}

\begin{exercise}\label{isometry}
Let $X$ be the blow up of $\pr2$ at $r$ distinct points.
Show that $w(x)\cdot w(y)=x\cdot y$ for all $x,y\in\operatorname{Cl}(X)$ and all $w\in W_r$, and
show that $w(K_X)=K_X$ for all $w\in W_r$, where $K_X=-3e_0+e_1+\cdots+e_r$.
\end{exercise}

\begin{exercise}\label{RRexercise}
Let $X$ be the blow up of $\pr2$ at $s$ distinct points $p_i\in\pr2$.
Let $F=tE_0-m_1E_1-\cdots-m_sE_s$. The theorem of 
Riemann-Roch for surfaces says that 
$$h^0(X,\OO_X(F))-h^1(X,\OO_X(F))+h^2(X,\OO_X(F))=\frac{F^2-K_X\cdot F}{2}+1.$$
Serre duality says $h^2(X,\OO_X(F))=h^0(X,\OO_X(K_X-F))$,
and hence $h^2(X,\OO_X(F))=0$ if $t\geq0$.
Thus for $t\geq 0$ and $m_i\geq 0$ for all $i$, taking $I=I(m_1p_1+\cdots+m_sp_s)$, we have
$H_I(t)=h^0(X,\OO_X(F))=\frac{F^2-K_X\cdot F}{2}+1+h^1(X,\OO_X(F))$.
Show that
$$\frac{F^2-K_X\cdot F}{2}+1=\binom{t+2}{2}-\sum_i\binom{m_i+1}{2}.$$
Conclude that $P_I(t)=\frac{F^2-K_X\cdot F}{2}+1$ where $P_I$ is the Hilbert polynomial for $I$, 
and that $h^1(X,\OO_X(F))=H_I(t)-P_I(t)$ is the difference between
the Hilbert function and Hilbert polynomial for $I$. 
\end{exercise}

\begin{exercise}\label{homaloidalclass}
Let $p_1,\ldots,p_8$ be generic points of $\pr2$.
Show that $\alpha(I(6p_1+\cdots+6p_8))=17$.
\end{exercise}

\begin{exercise}\label{exceptionalclass}
Let $p_1,\ldots,p_r\in\pr2$ be generic points of $\pr2$.
Let $X$ be the surface obtained by blowing up the points.
Let $w\in W_r$ and let $[C]=w(e_1)$.
Show that $C$ is a smooth rational curve with $C^2=C\cdot K_X=1$.
Conclude that $((mC)^2-K_X\cdot (mC))/2+1\leq0$ for all $m>1$.
Such a curve $C$ is called an \emph{exceptional curve}.
(By \cite[Theorem 2b]{refN2}, when $r\geq3$, the set of classes of exceptional curves 
is precisely the orbit $W_r(e_1)$.)
\end{exercise}

\begin{exercise}\label{fcexercise}
Let $p_1,\ldots,p_r\in\pr2$ be distinct points of $\pr2$.
Let $X$ be the surface obtained by blowing up the points.
Let $C$ be an exceptional curve on $X$, let $D$ be an effective
divisor, let $m=-C\cdot D>0$ and let $F=D-mC$. If $m>1$,
show that $h^0(X,\OO_X(D))=h^0(X,\OO_X(F))$ (hence
$C$ is a \emph{fixed component} of $|D|=|F|+mC$ of multiplicity $m$, 
where $|D|$ is the linear system of all curves
corresponding to elements of $H^0(X,\OO_X(D))$),
and that $(D^2-K_X\cdot D)/2<(F^2-K_X\cdot F)/2$;
conclude that $h^0(X,\OO_X(D))>(D^2-K_X\cdot D)/2+1$.
\end{exercise}

\section{The SHGH Conjecture}

The SHGH Conjecture \cite{refSe,refVanc,refG,refHi} gives an explicit
conjectural value for the Hilbert function of the ideal of a fat point subscheme of $\pr2$
supported at generic (or even just sufficiently general) points. 

Consider $I_4$ where $I$ is the ideal of the fat point subscheme $3p_1+3p_2+p_3+p_4\subset\pr2$.
Let $D=4E_0-3E_1-3E_2-E_3-E_4$ and let $C=E_0-E_1-E_2$.
Note that $D\cdot C=-2$; let $F=D-2C=2E_0-E_1-\cdots-E_4$.
We know $H_I(4)=h^0(X,\OO_X(D))\geq (D^2-K_X\cdot D)/2+1=
\binom{4+2}{2}-2\binom{3+1}{2}-2\binom{1+1}{2}=1$.
But by Exercise \ref{fcexercise} we also have 
$$H_I(4)=h^0(X,\OO_X(F))\geq (F^2-K_X\cdot F)/2+1=2.$$
The occurrence of $C$ as a fixed component of $|D|$ of multiplicity more than 1 results in
a strict inequality $h^0(X,\OO_X(D))>(D^2-K_X\cdot D)/2+1$.

The SHGH Conjecture says that whenever we have a divisor $D=dE_0-m_1E_1-\cdots-m_rE_R$
with $d,m_1,\ldots,m_r\geq 0$,
(assuming that the $E_i$ were obtained by blowing up $r\geq 3$ generic points of $\pr2$)
then either $h^0(X,\OO_X(D))=\max(0,(D^2-K_X\cdot D)/2+1)$ or there is an exceptional curve
$C$ (i.e., an effective divisor whose class is an element of the $W_r$-orbit of $E_1$)
such that $C\cdot D<-1$. If $h^0(X,\OO_X(D))>0$, it is easy to find all such $C$
and subtract them off, leaving one with $F$ such that
$h^0(X,\OO_X(F))=(F^2-K_X\cdot F)/2+1$. (If $D\cdot C\geq 0$ for all $C$, one can show that 
$[D]$ can be reduced by $W_r$ to a nonnegative linear combination of the classes
$e_0$, $e_0-e_1$, $2e_0-e_1-e_2$, $3e_0-e_1-e_2-e_3, \cdots, 3e_0-e_1-\cdots-e_r$;
see \cite{refDuke}.)

The SHGH Conjecture is known to hold for $r\leq 9$.

\begin{example}
Consider the fat point subscheme $Z=13p_1+13p_2+10p_3+\cdots+10p_7$ for generic points $p_i\in\pr2$.
We determine the Hilbert function of $I=I(Z)$.
First $H_I(28)=0$. We have $H_I(28)=h^0(X,\OO_X(D))$ for
the divisor $D=28E_0-13E_1-13E_2-10E_3-\cdots-10E_7$.
But $[D]$ reduces via $W_7$ to $-2e_0+2e_4+2e_5+5e_6+5e_7$, so
$h^0(X,\OO_X(D))=h^0(X,\OO_X(D'))$, where $D'=-2E_0+2E_4+2E_5+5E_6+5E_7$.
The occurrence of a negative coefficient for $e_0$ means $h^0(X,\OO_X(D'))=0$,
hence $H_I(t)=0$ for $t<29$.
Now consider $D=29E_0-13E_1-13E_2-10E_3-\cdots-10E_7$.
Then via the action of $W_7$ we obtain $D'=4E_0-E_1-\cdots-E_5+2E_6+2E_7$.
As in Exercise \ref{fcexercise}, we can subtract off $2E_6+2E_7$ to get
$F=D-(2E_6+2E_7)=4E_0-E_1-\cdots-E_5=(E_0)+(3E_0-E_1-\cdots-E_5)$. 
Thus $F\cdot C\geq 0$ for all exceptional $C$, so by the SHGH Conjecture
$H_I(29)=h^0(X,\OO_X(D))=h^0(X,\OO_X(D'))=h^0(X,\OO_X(F))=(F^2-K_X\cdot F)/2 +1=10$.
Finally consider $D=30E_0-13E_1-13E_2-10E_3-\cdots-10E_7$.
Here we get $F=D'=12E_0-4(E_1+\cdots+E_5)-E_6-E_7=3(3E_0-E_1-\cdots-E_5)+(3E_0-E_1-\cdots-E_7)$.
Thus $D'\cdot C\geq 0$ for all exceptional $C$, so we get
$H_I(30)=h^0(X,\OO_X(D))=h^0(X,\OO_X(D'))=h^0(X,\OO_X(F))=(F^2-K_X\cdot F)/2 +1=39$.
For $t\geq 30$ and $D=tE_0-13E_1-13E_2-10E_3-\cdots-10E_7$, we have 
$D=(t-30)E_0+(30E_0-13E_1-13E_2-10E_3-\cdots-10E_7)$ so
$D\cdot C=(t-30)E_0\cdot C+C\cdot (30E_0-13E_1-13E_2-10E_3-\cdots-10E_7)\geq 0$
for all exceptional $C$, so 
$H_I(t)=h^0(X,\OO_X(D))=\max(0,(D^2-K_X\cdot D)/2 +1)$,
but $(D^2-K_X\cdot D)/2 +1$ was positive for $t=30$ and adding a nonnegative 
multiple of $E_0$ only makes it bigger so we have
$H_I(t)=h^0(X,\OO_X(D))=(D^2-K_X\cdot D)/2 +1=\binom{t+2}{2}-2\binom{13+1}{2}-5\binom{10+1}{2}$.
\end{example}

We close by relating the statement of the SHGH Conjecture given above
to the special case stated in Conjecture \ref{SHGHconjSpecCase}.
Consider $F=tE_0-m(E_1+\cdots+E_r)$, where $p_1,\ldots,p_r\in\pr2$ are $r\geq9$ 
generic points of $\pr2$, $X$ is the surface obtained by blowing up the points and $E_i$
is the exceptional curve obtained by blowing up $p_i$.
Let $I$ be the radical ideal of the points. 
Then $H_{I^{(m)}}(t)=h^0(X,\OO_X(F))$.
For simplicity, we consider only the cases $t\geq 3m\geq0$.
Then $F=-mK_X+(t-3m)E_0$ with $t-3m\geq0$. But for any exceptional curve $E$ we have
$[E]=w([E_1])$ for some $w\in W_r$, so $-K_X\cdot E=-K_X\cdot E_1=1$
by Exercise \ref{isometry}. Since $E$ is a curve on $X$, its image in
$\pr2$ has nonnegative degree, so $E_0\cdot E\geq 0$.
Thus $F\cdot E\geq m\geq0$. The SHGH Conjecture therefore asserts
$H_{I^{(m)}}(t)=h^0(X,\OO_X(F))=\max\big(0,(D^2-K_X\cdot D)/2+1\big)=
\max\Big(0,\binom{t+2}{2}-r\binom{m+1}{2}\Big)$,
as conjectured in Conjecture \ref{SHGHconjSpecCase}.

{\vskip\baselineskip\noindent\Large\bf Exercises}

\setcounter{theorem}{0}

\begin{exercise}\label{HilbFuncExercise}
Find the Hilbert function of the ideal $I$ of $Z=12p_1+10p_2+\cdots+10p_8\subset \pr2$,
assuming the points are generic.
\end{exercise}

\section{Solutions}

\noindent{\bf 2. Affine space and projective space}

\vskip\baselineskip\noindent {\bf Solution \ref{AffVProjBij}.}
Define a map ${}^*\!\!:{\mathcal M}_{\leq t}(A)\to {\mathcal M}_t(R)$
by $x_1^{m_1}\cdots x_n^{m_n}\mapsto (x_1^{m_1}\cdots {x_n^{m_n}})^*
=x_0^{m_0}x_1^{m_1}\cdots x_n^{m_n}$
where $m_0=t-(m_1+\cdots+m_n)$, and define 
a map ${}_*\!\!:{\mathcal M}_t(R)\to{\mathcal M}_{\leq t}(A)$ by evaluating $x_0$ at 1; i.e., by
$x_0^{m_0}x_1^{m_1}\cdots x_n^{m_n}\mapsto (x_0^{m_0}\cdots{x_n^{m_n}})_*
=x_1^{m_1}\cdots x_n^{m_n}$.
If $f$ is a monomial in ${\mathcal M}_{\leq t}(A)$, clearly $(f^*)_*=f$, while if $F\in {\mathcal M}_t(R)$,
then just as clearly $(F_*)^*=F$. Thus ${}^*$ and $_*$ are inverse to each other and hence are bijections.

\vskip\baselineskip\noindent {\bf Solution \ref{exaffalphapower}.}
Pick $f\in I$ of degree $\alpha(I)$. Then $f^m\in I^m$, so $\alpha(I^m)\leq \deg(f^m)=m\alpha(I)$.
Since $J$ is homogeneous, $J$ has a set of homogeneous generators $g_1,\ldots,g_r$, 
hence $J^m$ is generated by products of $m$ of the generators $g_i$ (repeats allowed),
the minimum degree of which is $m\alpha(J)$. 
But for any homogeneous elements $b_1,\ldots,b_t$ in $R$, where we assume (by reindexing if need be)
that $\deg(b_1)\leq \deg(b_2)\leq \cdots\leq \deg(b_t)$,
the ideal $(b_1,\ldots,b_t)$ is contained in $M^s$ for $s=\deg(b_1)$, where 
$M$ is the ideal generated by the variables.
Since $M^s$ is the span of the monomials of degree at least $s$, there are no elements in $M^s$
(and hence none in $(b_1,\ldots,b_t)$) of degree less than $s$. Applied to $J^m$,
we see that $J^m$ has an element of degree $m\alpha(J)$ and no nonzero elements of degree less than that,
hence $\alpha(J^m)= m\alpha(J)$.

\vskip\baselineskip\noindent {\bf Solution \ref{sat}.}
Since $fM\subseteq I$ for all $f\in I$, we have $f\in P$ for all homogeneous $f\in I$. Thus $I\subseteq P$.
If $J\subseteq M$ is any homogeneous ideal such that $J_t=I_t$ for all $t\gg0$, then for any homogeneous
$g\in J$ and for $i$ large enough we have $gM^i\in J_t=I_t$, hence $g\in P$, so $J\subseteq P$.
Thus $P$ contains every nontrivial homogeneous
ideal whose homogeneous components eventually coincide with those of $I$.
Since $P$ is finitely generated, there is an $s$ large enough 
such that $fM^s\subset I$ for every generator $f$
in a given finite set of homogeneous generators for $P$. Thus $PM^s\subseteq I$ for $s\gg0$. 
But for degrees $t\geq\omega$, where $\omega$ is the maximum degree 
in a minimal set of generators of $P$, we have $P_tM_1=P_{t+1}$,
hence $(PM^i)_t=P_t$ for all $t\geq \omega+i$. Thus $P_t=(PM^i)_t\subseteq I_t\subset P_t$
for $t\gg0$. Hence $P$ is the largest ideal among all homogeneous ideals $J$ such that
$J_t=I_t$ for $t\gg0$; i.e., $\operatorname{sat}(I)=P$.
Of course, by maximality of the saturation we always have
$P\subseteq\operatorname{sat}(P)$, but 
$(\operatorname{sat}(P))_t=P_t=I_t$ for $t\gg0$, hence
$\operatorname{sat}(P)\subseteq P$, so $P=\operatorname{sat}(P)$.

\vskip\baselineskip\noindent{\bf 3. Fat points in affine space}

\vskip\baselineskip\noindent {\bf Solution \ref{exaffpower}.} 
Clearly $I(p_1)^{m_1}\cdots I(p_r)^{m_r}\subseteq \cap_{i=1}^rI(p_i)^{m_i}$.
For the reverse inclusion, note that
not every polynomial which vanishes at $p_1$ vanishes at $p_2$,
so we can pick a polynomial $f$ such that $f(p_1)=0$ but $f(p_2)\neq 0$.
Normalizing allows us to assume $f(p_2)=1$. Let $g=1-f$. Then $f\in I(p_1)$,
$g\in I(p_2)$ and $f+g=1$. Writing $(f+g)^{m_1+m_2}$ as a linear combination
of terms of the form $\binom{i+j}{j}f^ig^j$ with $i+j=m_1+m_2$, each term is either in $I(p_1)^{m_1}$
or in $I(p_2)^{m_2}$. Thus we can write $1=F+G$ where $F$ is the sum of the terms 
in $I(p_1)^{m_1}$ and $G$ is the sum of the terms in $I(p_2)^{m_2}$. Therefore every element
$h\in I(p_1)^{m_1}\cap I(p_2)^{m_2}$ can be written $h=hF+hG\in I(p_1)^{m_1}I(p_2)^{m_2}$;
i.e., $I(p_1)^{m_1}\cap I(p_2)^{m_2}=I(p_1)^{m_1}I(p_2)^{m_2}$. Similarly,
$I(p_1)^{m_1}\cap I(p_2)^{m_2}\cap I(p_3)^{m_3}=I(p_1)^{m_1}I(p_2)^{m_2}\cap I(p_3)^{m_3}=
I(p_1)^{m_1}I(p_2)^{m_2}I(p_3)^{m_3}$. Continuing in this way, we eventually have
$I(p_1)^{m_1}\cap \cdots\cap I(p_r)^{m_r}=I(p_1)^{m_1}\cdots I(p_r)^{m_r}$.

\vskip\baselineskip\noindent {\bf Solution \ref{exWaldschmidt}.} (a) Since $I^{bc}=(I^b)^c$, we have
$\alpha(I^{bc})=\alpha((I^b)^c)\leq c\alpha(I^b)$ by Exercise \ref{exaffalphapower}. 
Now the result follows by dividing by $bc$.

(b) By (a), $\frac{\alpha(I^{m!})}{m!}$ is decreasing as $m$ increases but is always positive,
so it has a limit $L$.

(c) For any $\varepsilon>0$, we will show for $t\gg0$ that $L\leq \alpha(I^t)/t\leq L+ \varepsilon $.
For $m\gg0$ we may assume that $L\leq \alpha(I^{m!})/m!\leq L+ \varepsilon/2$.
For $t\geq m!$ we can write $t=s\cdot m!+d$ for some $0\leq d<m!$.
Then $I^{(s+1)m!}\subseteq I^t$, so $\alpha(I^t)\leq \alpha(I^{(s+1)m!})\leq (s+1)\alpha(I^{m!})$, so
\begin{align*}
L&\leq \frac{\alpha(I^{t!})}{t!}\leq \frac{\alpha(I^t)}{t}\leq \frac{(s+1)\alpha(I^{m!})}{s\cdot m!+d}\\
&=\frac{s\alpha(I^{m!})}{s\cdot m!+d}+\frac{\alpha(I^{m!})}{s\cdot m!+d}
\leq \frac{\alpha(I^{m!})}{m!} + \frac{\alpha(I^{m!})}{s\cdot m!}\leq L+\frac{\varepsilon}{2}+\frac{\alpha(I^{m!})}{s\cdot m!},
\end{align*}
but for $s\gg0$ (i.e., for $t\gg m!$), we have $\alpha(I^{m!})/(s\cdot m!)\leq \varepsilon/2$.
The fact that $\lim_{m\to\infty}\frac{\alpha(I^{m})}{m}\leq \frac{\alpha(I^t)}{t}$ for all $t\geq1$ follows
from (a) and (b).

\vskip\baselineskip\noindent {\bf Solution \ref{affstarsandbars}.} The vector space $A_{\leq t}$ has basis consisting of monomials $\mu$
of degree at most $t$ in the $n$ variables $X_1,\ldots,X_n$. By introducing an extra variable $X_0$,
we can create a bijection between the monomials of degree $t$ in $X_0,\ldots,X_n$ and the
monomials $\mu$ of degree at most $t$ in $X_1,\ldots,X_n$ (given by multiplying
each such $\mu$ by $X_0^i$ where $i=t-\deg(\mu)$). Now see Exercise \ref{starsandbars}.

\vskip\baselineskip\noindent {\bf Solution \ref{starsandbars}.} 
We must count the number of arrangements of $n$ ones and $t$ zeros,
since such arrangements are in bijection with the monomials in $n+1$ variables
of degree $t$ (for example, $001011$ is the monomial $x_0^2x_1$, since there
are 2 zeros before the first 1, giving $x_0^2$, 1 zero immediately before the second 1, giving
$x_1^1$, and no zeros immediately before the third 1 or the fourth one,
giving $x_2^0$ and $x_3^0$, and so altogether $x_0^2x_1^1x_2^0x_3^0$).
But the number of arrangements of $n$ ones and $t$ zeros is $\binom{t+n}{n}$.

\vskip\baselineskip\noindent {\bf Solution \ref{dimbound1pt}.} 
Let $q=(0,\ldots,0)\in\Aff{n}$. There is an automorphism $\psi:\Aff{n}\to\Aff{n}$
taking $p$ to $q$, given by translation $(b_1,\ldots,b_n)\mapsto(b_1-a_1,\ldots,b_n-a_n)$.
The corresponding automorphism on rings is $\psi^*:\field[X_1,\ldots,X_n]\to \field[X_1,\ldots,X_n]$
where $X_i\mapsto X_i+a_i$. Note that $\psi^*(I(q)^m)=I(p)^m$ and that 
$\psi^*$ induces vector space bijections 
$A_{\leq t}\to A_{\leq t}$ and $(I(q)^m)_{\leq t}\to(I(p)^m)_{\leq t}$. 
Thus it is enough to consider the case that $a_i=0$ for all $i$.
In this case $I=(X_1,\ldots,X_n)$ is a monomial ideal, and hence homogeneous.
Thus $\alpha(I^m)=m\alpha(I)=m$. Therefore, $t<m$ implies $H^\leq_{I^m}(t)=0$.
If $t<m$, let $t+i=m$ for some $i>0$. Then 
$\binom{t+n}{n}\leq\binom{m-1+n}{n}$ (look at Pascal's triangle)
so $\binom{t+n}{n}-\binom{m+n-1}{n}\leq 0$, hence
$H^\leq_{I^m}(t)\geq\binom{t+n}{n}-\binom{m+n-1}{n}$
with equality for $t=m-1$.
For $t\geq m$, $(I^m)_{\leq t}$ is spanned by the monomials of degree
$m$ through degree $t$. By introducing a variable $X_0$, we can regard these
as being monomials of degree exactly $t$ in $\field[X_0,\ldots,X_n]$ such that $X_0$ has 
exponent at most $t-m$: given any monomial $\mu$
in $X_1,\ldots,X_n$ of degree $m\leq i\leq t$, $X_0^{t-i}\mu$ is a monomial
in $X_0,\ldots,X_n$ of degree $t$ such that $X_0$ has exponent at most $t-m$. 
By Exercise \ref{starsandbars}, there are $\binom{t+n}{n}$ monomials in 
$X_0,\ldots,X_n$ of degree $t$. The monomials in $X_0,\ldots,X_n$ of degree $t$
but for which $X_0$ has exponent more than $t-m$ are in bijective correspondence
with the monomials in $X_0,\ldots,X_n$ of degree $m-1$ (just multiply by
$X_0^{t-m+1}$). There are thus $\binom{t+n}{n}-\binom{m-1+n}{n}$
monomials of degree $t$ in $X_0,\ldots,X_n$ for which $X_0$ has exponent at most
$t-m$, hence $H^\leq_{I^m}(t)=\binom{t+n}{n}-\binom{m+n-1}{n}$.

\vskip\baselineskip\noindent {\bf Solution \ref{exaffalpha}.}  
There is a linear polynomial $f$ defining the line through $p_1$ and $p_2$.
Thus $f^m\in I^m$ so $\alpha(I^m)\leq m=m\alpha(I)$ (see Exercise \ref{exaffalphapower}).
By Exercise \ref{dimbound1pt}, $H^\leq_{I(p_1)^m}(m-1)=0$ and $H^\leq_{I(p_1)^m}(m)>0$, so
$\alpha(I(p_1)^m)=m$. But $I\subset I(p_1)$ so $I^m\subset I(p_1)^m$ 
hence $m=\alpha(I(p_1)^m)\leq \alpha(I^m)$
so $\alpha(I^m)=m=m\alpha(I)$. Now consider the second statement.
Since $p_1,p_2,p_3$ are noncolinear, no linear polynomial can vanish at all three points.
Thus $\alpha(J)\geq 2$.
Let $f_1$ be the linear polynomial defining the line through $p_2$ and $p_3$,
$f_2$ the linear polynomial defining the line through $p_1$ and $p_3$,
and $f_3$ the linear polynomial defining the line through $p_1$ and $p_2$.
If $m=2s$, then $(f_1f_2f_3)^s$ has degree $3s=3m/2$ but is in 
$I(p_1)^m\cap I(p_2)^m\cap I(p_3)^m=J^m$ so
$\alpha(J^m)\leq 3m/2<2m\leq m\alpha(J)$. 
If $m=2s+1$, then $(f_1f_2f_3)^sf_1f_2\in J^m$, hence
$\alpha(J^m)\leq 3s+2<4s+2=2m\leq m\alpha(J)$.

\vskip\baselineskip\noindent {\bf Solution \ref{nondecrAff}.}  
We have a vector space inclusion $\phi:A_{\leq t}\to A_{\leq t+1}$. 
Compose with the quotient
$A_{\leq t+1}/I_{\leq t+1}$; the kernel is $I_{\leq t}$, hence $\phi$ induces an injective
map $A_{\leq t}/I_{\leq t}\to A_{\leq t+1}/I_{\leq t+1}$.

\vskip\baselineskip\noindent {\bf Solution \ref{dimbound}.}  
The polynomials in $I(p_i)^{m_i}$ of degree at most $t$ form a linear subspace of $A_{\leq t}$
defined by $\binom{m_i+n-1}{n}$ homogeneous linear equations.
Thus $(I(m_1p_1+\cdots+m_rp_r))_{\leq t}$ is a linear subspace defined by
$\sum_i\binom{m_i+n-1}{n}$ homogeneous linear equations.
Therefore $H^\leq_{I}(t)\geq\binom{t+n}{n}-\sum_i\binom{m_i+n-1}{n}$,
with the inequality (as opposed to equality) arising since the equations
need not be independent.

For the rest, note that by the Chinese Remainder Theorem we have an isomorphism
$A/I\to\bigoplus_i A/I(p_i)^{m_i}$ in which $f+I\mapsto (f+I(p_1)^{m_1},\ldots,f+I(p_r)^{m_r})$.
But $A/I(p_i)^{m_i}$ is finite dimensional for each $i$ (of dimension
$\binom{m_i+n-1}{n}$ in fact), so for some $d$ we have a surjection
$A_{\leq d}\to\oplus_i A/I(p_i)^{m_i}$ and hence a surjection 
$A_{\leq t}\to\oplus_i A/I(p_i)^{m_i}$ for all $t\geq d$.
Thus $A_{\leq t}/I_{\leq t}\cong \bigoplus_i A/I(p_i)^{m_i}$ for all $t\geq d$, so
$H^\leq_{A/I}(t)=\dim_\field(A_{\leq t}/I_{\leq t})=\sum_i\binom{m_i+n-1}{n}$ and
$H^\leq_I(t)=\binom{t+n}{n}-\sum_i\binom{m_i+n-1}{n}$.

\vskip\baselineskip\noindent {\bf Solution \ref{NagataBound}.} 
By Exercise \ref{dimbound}, $\alpha(I^m)\leq t$ if $\binom{t+n}{n}-r\binom{m+n-1}{n}>0$.
If we regard $\binom{t+n}{n}$ as being $(t+n)(t+n-1)\cdots(t+1)/n!$ and
$\binom{m+n-1}{n}$ as being $(m+n-1)\cdots(m+1)m/n!$,
then substitute $t=\lambda m$ (so $\lambda=t/m$); 
$\binom{t+n}{n}-r\binom{m+n-1}{n}$ becomes a polynomial in $m$ of degree
$n$ with leading coefficient $\lambda^n-r$. Thus,
for any integers $t$ and $m$ such that $t>m\sqrt[n]{r}$, 
$\binom{t'+n}{n}-r\binom{m'+n-1}{n}$ will be positive
for $t'=ti$ and $m'=mi$ for $i\gg0$. 
I.e., $\gamma(I)\leq \alpha(I^m)/m\leq t/m$, but we can choose integers $t$ and $m$
such that $t/m$ is arbitrarily close to but bigger than $\sqrt[n]{r}$,
hence $\gamma(I)\leq \sqrt[n]{r}$.

If $1\leq r\leq n$, the points lie on a hyperplane, so $\gamma(I)\leq \alpha(I)/1=1$.
But $\gamma(J)=1$ if $J$ is the ideal of any one of the points, so 
(as we saw for $r=n=2$ in the proof of Proposition \ref{valsofgammaProp} above)
$1=\gamma(J)\leq\gamma(I)$ so $\gamma(I)=1$.

\vskip\baselineskip\noindent {\bf Solution \ref{NagConj}.}  
Let $a=\inf\{\frac{t}{m}: \binom{t+n}{n}-s\binom{m+n-1}{n}>0; m,t\geq1\}$.
We can rewrite $\binom{t+n}{n}-s\binom{m+n-1}{n}>0$ 
as $\frac{(t+1)(t+2)}{2}-s\frac{m(m+1)}{2}>0$, which is equivalent to
$t^2+3t-s(m^2+m)\geq0$. This in turn becomes 
$m^2(l^2-s)+m(3l-s)\geq0$ if we substitute $t=lm$.
If $l=t/m<\sqrt{s}$ and $s\geq 9$, then $l^2-s<0$ and $3l-s<0$,
hence $m^2(l^2-s)+m(3l-s)<0$, so $t^2+3t-s(m^2+m)<0$
and therefore $\binom{t+n}{n}-s\binom{m+n-1}{n}\leq0$.
It follows that $a\geq\sqrt{s}$.
But if $l=t/m>\sqrt{s}$, then the leading coefficient $l^2-s$ of
$m^2(l^2-s)+m(3l-s)$ is positive, hence for $i\gg0$, 
$(im)^2(l^2-s)+im(3l-s)>0$. Therefore
$\binom{it+n}{n}-s\binom{im+n-1}{n}>0$, so 
$a\leq (ti)/(mi)=t/m=l$ for all rationals $l>\sqrt{s}$,
hence $a\leq\sqrt{s}$ so $a=\sqrt{s}$.

\vskip\baselineskip\noindent {\bf Solution \ref{units}.}  
Say $p=(a_1,\ldots,a_n)$. Then $A=\field[X_1,\ldots,X_n]=\field[Y_1,\ldots,Y_n]$, where
$Y_i=X_i-a_i$, and $I(p)=(X_1-a_1,\ldots,X_n-a_n)=(Y_1,\ldots,Y_n)$.
Given any element $f\in A$, it has the same degree whether expressed in terms of the variables
$Y_i$ or in terms of the $X_i$, but $A/(I(p))^m=A/(Y_1,\ldots,Y_n)^m$, so every element
of $A/(I(p))^m$ is the image of an element of degree at most $m-1$. Moreover, if $f(p)=0$,
then $f\in I(p)$, so $\overline{f}$ is nilpotent (since $\overline{f}^m=0$)
hence not a unit. And if $f(p)\neq0$, let $g=(f-f(p))/f(p)$.
Then $\overline{g}^m=0$, so $(1+\overline{g})(1+(-\overline{g})+(-\overline{g})^2+\cdots+(-\overline{g})^{m-1})=1$.
Thus $\overline{f}=f(p)(1+\overline{g})$ is a unit since $f(p)$ and $1+\overline{g}$ are units.

\vskip\baselineskip\noindent {\bf Solution \ref{AffIsIrr}.}  
If $f(p)=0$ for all $p\in\Aff{n}$, then $f\in\sqrt{(0)}$ by the Nullstellensatz,
hence $f=0$, contrary to assumption.

\vskip\baselineskip\noindent {\bf Solution \ref{pointsavoidance}.}  
If $n=1$, this is clear, so assume $n>1$.
For each $i$ and $j$, consider the vector $v_{ij}$ from $p_i$ to $p_j$.
Then it suffices to find $f$ such that $f(v_{ij})\neq0$ for all $i\neq j$;
i.e., given finitely many points $[v_{ij}]\in\pr{n-1}$, we must find a linear
form $f\in\field[\pr{n-1}]$ such that $f(v_{ij})\neq0$ for all $i\neq j$.
I.e., regarding linear forms as points in the dual space $(\pr{n-1})^*$
and points $v_{ij}$ as hyperplanes in $(\pr{n-1})^*$, we must find a point
in $(\pr{n-1})^*$ not on any of a finite set of hyperplanes. 
But we can think of a point of $(\pr{n-1})^*$ as giving a point of $\Aff{n}$
(unique up to multiplication by nonzero scalars) and vice versa, and we can think of 
hyperplanes in $(\pr{n-1})^*$ as giving codimension 1 linear
subspaces of $\Aff{n}$ and vice versa, so the result
follows from Exercise \ref{AffIsIrr}.

\vskip\baselineskip\noindent {\bf Solution \ref{dimbound2}.}  
Let $s=t-(m_1+\cdots+m_r-1)$ and
let $g$ be a degree 1 polynomial that does not vanish at any of the points $p_i$
(start with any $g$ with $\deg(g)=1$ and replace $g$ by $g-c$, where
$c\in\field\setminus \{g(p_1),\ldots,g(p_r)\}$).
By Exercise \ref{pointsavoidance} we can pick a linear form $f$ such that 
$f(p_i)\neq f(p_j)$ whenever $p_i\neq p_j$. Define $f_i=g^s\Pi_{j\neq i}(f-f(p_j))^{m_j}$.
Note that $\deg(f_i)=m_1+\cdots+m_r-m_i+s=t-(m_i-1)$, and that $f_i\in I(p_j)^{m_j}$ for all $j\neq i$,
but by Exercise \ref{units} $f_i$ maps to a unit $\overline{f_i}$ in $A/I(p_i)^{m_i}$ 
under the quotient homomorphism $\phi_i: A\to A/I(p_i)^{m_i}$. 

Given any element $(\overline{a_1},\ldots, \overline{a_r})\in \bigoplus_j A/I(p_j)^{m_j}$, 
we can by Exercise \ref{units} pick elements
$b_j\in A_{\leq(m_j-1)}$ such that $\phi_j(b_j)=\overline{f_j}^{-1}\overline{a_j}$, for $j=1,\ldots,r$.
Consider the homomorphism
$\phi: A\to \bigoplus_j A/I(p_j)^{m_j}$ defined by $\phi(h)=(\phi_1(h),\ldots,\phi_r(h))$. 
Taking $h=\sum_jf_jb_j$, we see $\phi(h)=(\overline{a_1},\ldots, \overline{a_r})$,
and since $\deg(h)\leq \max_j\{\deg(f_jb_j)\} = \max_j\{t-(m_j-1)+m_j-1\}=t$, 
we see that $\phi(A_{\leq t})=\bigoplus_j A/I(p_j)^{m_j}$. This gives the result, since
$\bigoplus_j A/I(p_j)^{m_j}=\phi(A_{\leq t})\equiv A_{\leq t}/I_{\leq t}$, hence $H^\leq_I(t)=
\dim(I_{\leq t})=\dim(A_{\leq t})-\dim(\bigoplus_j A/I(p_j)^{m_j})=
\binom{t+n}{n}-\sum_j\binom{m_j+n-1}{n}$.

Note that $H^\leq_{I}(t)=\binom{t+n}{n}-\sum_i\binom{m_i+n-1}{n}$ is equivalent to
$\phi|_{A_{\leq t}}$ being surjective. Thus to show that
$H^\leq_{I}(t)>\binom{t+n}{n}-\sum_i\binom{m_i+n-1}{n}$
it is enough to show $\phi|_{A_{\leq t}}$ is not surjective.
Suppose the points are colinear. Let $L$ be the line containing the points.
Then we have a commutative diagram
\begin{equation*}
\begin{matrix}
A & \to & \bigoplus_j A/I(p_j)^{m_j}\\
\downarrow & & \downarrow\\
\hbox to0in{\hss$\overline{A}=\field[L]=$}\field[X] & \to & \bigoplus_j \overline{A}/\overline{I}(p_j)^{m_j}\\
\end{matrix}
\end{equation*}
where $\overline{A}=\field[L]= \field[X]$ is the coordinate ring of the line $L$, hence a 
polynomial ring in a single variable $X$, and $\overline{I}(p_j)$ is the ideal in $\overline{A}$
of the point $p_j$. The upper horizontal arrow is $\phi$, the lower one is
the corresponding homomorphism $\overline{\phi}$ for dimension 1.
The vertical arrows are the usual quotients, and are therefore surjective.
Thus to show $\phi|_{A_{\leq t}}$ is not surjective
for $t<m_1+\cdots+m_r-1$, it is enough to show 
$(\field[X])_{\leq t}\to \bigoplus_j \overline{A}/\overline{I}(p_j)^{m_j}$
is not surjective; i.e., it is enough to consider the case $n=1$.
But then $H^\leq_{I}(t)\geq 0$, while $\binom{t+1}{1}-\sum_i\binom{m_i}{1}=t+1-\sum_im_i<0$ if $t<m_1+\cdots+m_r-1$.

\vskip\baselineskip\noindent{\bf 4. Fat points in projective space}

\vskip\baselineskip\noindent {\bf Solution \ref{homogprod}.}  
First, $I(p_1)^{m_1}\cdots I(p_r)^{m_r}\subseteq \cap_{i=1}^rI(p_i)^{m_i}$
and $\alpha(I(p_1)^{m_1}\cdots I(p_r)^{m_r})=
m_1+\cdots+m_r$. Now, by pairing the points up $p_1$ with $p_2$, $p_3$ with $p_4$,
etc.\ (there will be a point left over if $r$ is odd), 
we can pick a linear form that vanishes on $p_1$ and $p_2$, and a linear form
that vanishes on $p_3$ and $p_4$, etc.\ (if $r$ is odd, just pick any line through the 
leftover point). Raising the first to the power
$\max(m_1,m_2)$, the second to the power $\max(m_3,m_4)$, etc., and then
multiplying the results together we obtain a form of degree
$\max(m_1,m_2)+\max(m_3,m_4)+\cdots$ in $\cap_{i=1}^rI(p_i)^{m_i}$.
But  $\max(m_1,m_2)+\max(m_3,m_4)+\cdots <m_1+\cdots+m_r$,
hence $I(p_1)^{m_1}\cdots I(p_r)^{m_r}\neq\cap_{i=1}^rI(p_i)^{m_i}$.

\vskip\baselineskip\noindent {\bf Solution \ref{nondecrHF}.}  
Choose a linear form $F$ that does not vanish at any of the points $p_i$.
(This is always possible if the field $\field$ is large enough, but might not
be possible if $\field$ is finite.)
Let $G\in R_t$. By a linear change of coordinates, we may assume $F=x_0$.
Recall the map $\delta_t$ defined right after Remark \ref{notopenrem}.
If $FG\in I_{t+1}$, then $\delta_t(G)=\delta_{t+1}(x_0G)=\delta_{t+1}(FG)\in (I_A)_{\leq t+1}$,
but $\deg(\delta_t(G))\leq t$, so $\delta_t(G)\in (I_A)_{\leq t}$, hence
$G=\eta_t(\delta_t(G))\in (I_R)_t=I_t$. Thus multiplication by $F$ gives an injection
$(R/I)_t\to (R/I)_{t+1}$, hence $H_{R/I}(t)\leq H_{R/I}(t+1)$ for all $t\geq 0$.
(Alternatively, one could also approach this via a primary decomposition $I=\cap_i Q_i$.
The primes corresponding to the primary components of the
primary decomposition of $I$ are just the ideals $I(p_i)$ of the points;
i.e., $\sqrt{Q_i}=I(p_i)$. By hypothesis, $F\not\in I(p_i)$ for the points $p_i$,
hence for each $i$ we have $F^j\not\in Q_i$ for all $j\geq1$.
But $FG\in I$ implies $FG\in Q_i$ for all $i$; since no power of $F$ is in $Q_i$ we must
have $G\in Q_i$ for all $i$ hence $G\in I$. Thus multiplication by $F$ gives an injection
$R/I\to R/I$, and since $F$ is homogeneous of degree 1, this means
multiplication by $F$ gives an injection $(R/I)_t\to (R/I)_{t+1}$ for each $t\geq0$.)

\vskip\baselineskip\noindent {\bf Solution \ref{incrHF}.}  
By Exercise \ref{nondecrHF} or Exercise \ref{nondecrAff} we know that $H_{R/I}$ is nondecreasing. By Equation \eqref{affprofeq} and Exercise \ref{dimbound2},
$H_{R/I}(t)=\sum_i\binom{m_i+n-1}{n}$ for $t\gg0$.
Thus it is enough to show $H_{R/I}(s)=H_{R/I}(s+1)$ implies 
$H_{R/I}(s+1)=H_{R/I}(s+2)$ (and hence by induction $H_{R/I}(t)$ is constant,
and in fact equal to $\sum_i\binom{m_i+n-1}{n}$, for all $s\geq c$).

Choose linearly independent linear forms $F_0,\ldots, F_n$ such
that none of the $F_j$ vanish at any of the $p_i$.
By Exercise \ref{nondecrHF}, multiplication by any $F_j$ 
gives injective vector space homomorphisms $\lambda_{j,t}:R_t/I_t\to R_{t+1}/I_{t+1}$ for all $t\geq0$.
If $H_{R/I}(s)=H_{R/I}(s+1)$, then $\lambda_{j,s}$ is an isomorphism
for all $j$. Thus, since multiplication is commutative, for all $i$ and $j$ we have
$\lambda_{i,s+1}(R_{s+1}/I_{s+1})=
\lambda_{i,s+1}\lambda_{j,s}(R_s/I_s)=
\lambda_{j,s+1}\lambda_{i,s}(R_s/I_s)=
\lambda_{j,s+1}(R_{s+1}/I_{s+1})$.
But $F_0,\ldots,F_n$ generate $R$; in particular, $F_0R_t+\cdots+F_nR_t=R_{t+1}$ 
for all $t\geq0$, so $\sum_i F_i(R_t/I_t)=R_{t+1}/I_{t+1}$,
hence $R_{s+2}/I_{s+2}=\sum_j\lambda_{j,s+1}(R_{s+1}/I_{s+1})=
\sum_j\lambda_{i,s+1}(R_{s+1}/I_{s+1})=\lambda_{i,s+1}(R_{s+1}/I_{s+1})$,
so $H_{R/I}(s+2)=\dim \lambda_{i,s+1}(R_{s+1}/I_{s+1})=\dim R_{s+1}/I_{s+1}=H_{R/I}(s+1)$.
(Alternatively, let $F$ be a linear form not vanishing at any of the points. 
By Exercise \ref{nondecrHF}, $F$ induces an injection 
$(R/I)_t \to (R/I)_{t+1}$. So we have an exact sequence
$$0 \to (R/I)_t \stackrel{\times F}{\to} (R/I)_{t+1} \to (R/(I, F ))_{t+1} \to 0.$$
The module on the right is a standard graded algebra, so it cannot be zero in one degree
and nonzero in the next.)

\vskip\baselineskip\noindent {\bf Solution \ref{decrHF}.}  
Let $J=(x^2y,xy^3)$. Then $H_{R/J}=(1,2,3,3,2,2,\ldots)$.

\vskip\baselineskip\noindent {\bf Solution \ref{SHGHimpliesNag}.}  
Let $I=I_R(p_1+\cdots+p_r)\subset\field[\pr2]$ be the ideal
of $r\geq9$ generic points $p_i\in\pr2$.
By Exercise \ref{NagataBound}, we have $\gamma(I)\leq\sqrt{r}$.
By Conjecture \ref{SHGHconjSpecCase}, it is enough now to show for $r\geq 9$
and $m>0$ that 
$t<m\sqrt{r}$ implies $\binom{t+2}{2}-r\binom{m+1}{2}<1$, since then
$\alpha(I^{(m)})/m>t/m$ for all $t/m<\sqrt{r}$ and hence
$\gamma(I)=\lim_{m\to\infty}\alpha(I^{(m)})/m\geq\sqrt{r}$.
But $\binom{t+2}{2}-r\binom{m+1}{2}= (t^2+3t+2-r(m^2+m))/2$,
and $(t^2+3t+2-r(m^2+m))/2<0$ for $t=0$, so (since $(t^2+3t+2-r(m^2+m))/2$ is
strictly increasing as a function of $t$ for $t\geq 0$) it suffices now to show
$(t^2+3t+2-r(m^2+m))/2\leq1$ for $t=m\sqrt{r}$.
But $(t^2+3t+2-r(m^2+m))/2=1-m(r-3\sqrt{r})/2$ for $t=m\sqrt{r}$,
and $r-3\sqrt{r}\geq0$ for $r\geq 9$, so the result follows.

\vskip\baselineskip\noindent {\bf Solution \ref{W-Sk general bound}.}  
Since $I^{((m-1+n)t)}\subseteq (I^{(m)})^t$ we 
have 
$$t\alpha(I^{(m)})=\alpha((I^{(m)})^t)\leq\alpha(I^{(t(n+m-1))}),$$ 
so dividing by $t(n+m-1)$ 
and taking the limit as $t\to\infty$ gives
$$\frac{\alpha(I^{(m)})}{n+m-1}\leq\gamma(I).$$

\vskip\baselineskip\noindent {\bf Solution \ref{easyContainment}.}  
Say $r\geq m$; then $I(p_i)^r\subseteq I(p_i)^m$, hence 
$$I^r\subseteq I^{(r)}=\cap_i I(p_i)^r\subseteq \cap_i I(p_i)^m=I^{(m)}.$$
Conversely, assume $I^r\subseteq I^{(m)}$. 
By Equation \eqref{affprofeq}, for all $t$ we have
$H_{I_R^{(m)}}(t)=H^\leq_{I_A^m}(t)$ and $H_{I_R^{(r)}}(t)=H^\leq_{I_A^r}(t)$.
By Remark \ref{sympowerRem} we have 
$H_{I_R^r}(t)=H_{I_R^{(r)}}(t)$ for $t\gg0$, and thus $H_{I_R^r}(t)=H^\leq_{I_A^r}(t)$. By 
Exercise \ref{dimbound} for $t\gg0$ we have
$H^\leq_{I_A^r}(t)=\binom{t+n}{n}-s\binom{r+n-1}{n}$
and $H^\leq_{I_A^m}(t)=\binom{t+n}{n}-s\binom{m+n-1}{n}$.
But $I^r\subseteq I^{(m)}$ implies $(I^r)_t\subseteq (I^{(m)})_t$
and thus, for $t\gg0$ we have
\begin{align*}
\binom{t+n}{n}-s\binom{m+n-1}{n}&=H^\leq_{I_A^m}(t)=H_{I_R^{(m)}}(t)\\
&\geq H_{I_R^r}(t)=H^\leq_{I_A^r}(t)=\binom{t+n}{n}-s\binom{r+n-1}{n},
\end{align*}
hence $s\binom{m+n-1}{n}\leq s\binom{r+n-1}{n}$ for $t\gg0$.
But as is easy to see by looking at Pascal's triangle,
$\binom{j+n-1}{n}$ is an increasing function of $j$, so we conclude $m\leq r$.

\vskip\baselineskip\noindent{\bf 5. Examples: bounds on the Hilbert function of fat point subschemes of $\pr2$}

\vskip\baselineskip\noindent {\bf Solution \ref{diffOseqEx}.}  
Note that $\Delta H_{R/I(Z)}=\operatorname{diag}({\bf d})$
where ${\bf d}=(r_1,\ldots,r_s)$; the given answer
is just $\operatorname{diag}({\bf d})$ for this ${\bf d}$.
It is tedious to write this out; examine some dot diagrams.

\vskip\baselineskip\noindent {\bf Solution \ref{starConfigEx}.}  
Consider the reduction vector ${\bf d}=(9,8,7,6,3,2,1)$
obtained from the sequence of lines $L_0,L_1,L_2,L_3,L_0,L_1,L_2$.
Then 
$$\Delta H_{R/I(Z)}=\operatorname{diag}({\bf d})=(1,2,3,4,5,6,7,4,4,0,0,\ldots).$$

\vskip\baselineskip\noindent {\bf Solution \ref{BndOnReg}.}  
We obtain ${\bf d}$ by construction. Every dot in the dot diagram which we use to compute
$\operatorname{diag}({\bf d})$ is on a diagonal line with 
$x$-intercept at most $m_1+\cdots+m_r-1$,
hence for $t\geq m_1+\cdots+m_r-1$, $H_{R/I(Z)}(t)\geq N$, 
where $N$ is the total number of dots,
but the number of dots is $N=\sum_i\binom{m_i+1}{2}$, and we know
$\sum_i\binom{m_i+1}{2}\geq H_{R/I(Z)}(t)$ for all $t\geq 0$. 
Thus $\sum_i\binom{m_i+1}{2}\geq H_{R/I(Z)}(t)$ for all $t\geq 0$ and
$H_{R/I(Z)}(t)\geq \sum_i\binom{m_i+1}{2}$ for $t\geq m_1+\cdots+m_r-1$,
so $H_{R/I(Z)}(t)=\sum_i\binom{m_i+1}{2}$ for all
$t\geq m_1+\cdots+m_r-1$.

\vskip\baselineskip\noindent{\bf 6. Hilbert functions: some structural results}

\vskip\baselineskip\noindent {\bf Solution \ref{exO-seq1}.}  
One solution is to use Theorem \ref{CHTtheorem} to find a reduction vector ${\bf d}$.
Pick a line $L$ through $p$ and let $L_1=L_2=L_3=L$.
The corresponding reduction vector is ${\bf d}=(3,2,1)$.
By the theorem, $\Delta H_{R/I}=(1,2,3,0,0,\ldots)$. But if we pick three distinct lines
$L_1',L_2',L_3'$ and on $L_1'$ we pick 3 points, on
$L_2'$ we pick 2 points and on $L_3'$ we pick 1 point,
where we avoid ever picking a point where two lines cross,
and if we let $Z$ be the union of these six points, then
by the theorem $\Delta H_{R/I(Z)}=\operatorname{diag}({\bf d})$.
Hence, $I$ and $I(Z)$ have the same Hilbert function.
Alternatively, it is not hard to work out the Hilbert function of a 
power of the ideal of a single point. Doing so gives
$H_{R/I}=(1,3,6,6,6,\ldots)$, so
another solution is to work backwards to find ${\bf d}$, given the fact 
asserted by Theorem \ref{GMRthm} that
$\Delta H_{R/I}=\operatorname{diag}({\bf d})$. 
Thus $\Delta H_{R/I}=(1,2,3,0,0,\ldots)$ so ${\bf d}=(3,2,1)$.
Now proceed as in the first solution to obtain $Z$ with
Hilbert function $H_{R/I}$.

\vskip\baselineskip\noindent {\bf Solution \ref{exO-seq2}.}  
It is enough to show that ${\bf d}\neq{\bf d}'$ implies
that $\operatorname{diag}({\bf d})\neq \operatorname{diag}({\bf d}')$.
Say ${\bf d}=(d_1,\ldots,d_r)$ and ${\bf d}'=(d_1',\ldots,d_s')$.
By assumption, ${\bf d}$ and ${\bf d}'$ are decreasing.
We will prove the contrapositive, so assume
$\operatorname{diag}({\bf d})=\operatorname{diag}({\bf d}')$.
If $d_1<d_1'$, then $d_i<d_1<d_1'$, for all $1<i\leq s$,
hence the entries of $\operatorname{diag}({\bf d})$ for degrees $t$ with
$d_1\leq t<d_1'$ will be 0 but nonzero for $\operatorname{diag}({\bf d}')$
(where we note the degree 0 entry is the first entry, the degree 1 entry is the second
entry, etc.). Thus $d_1\leq d_1'$ and by symmetry we have $d_1=d_1'$.
Therefore after deleting the first entries of ${\bf d}$ and ${\bf d}'$ we get
$\operatorname{diag}((d_2,\ldots,d_r))=\operatorname{diag}((d_2',\ldots,d_s'))$
and we repeat the argument. Eventually we obtain $d_i=d_i'$ for all $i$,
and so $r=s$, and thus ${\bf d}={\bf d}'$.

\vskip\baselineskip\noindent{\bf 7. B\'ezout's theorem in $\pr2$ and applications}

\vskip\baselineskip\noindent {\bf Solution \ref{intmultEx1}.} 
Say $F(p)\neq 0$. Then $F\not\in I(p)^m$ for $m\geq1$, so 
$$(x_0,x_1,x_2)\subseteq \sqrt{(F,G)+I(p)^m}$$
by the Nullstellensatz, hence for $t$ large enough $(x_0,x_1,x_2)_t=((F,G)+I(p)^m)_t$
so $\dim R_t/((F,G)+I(p)^m)_t=0$.

\vskip\baselineskip\noindent {\bf Solution \ref{intmultEx2}.} 
Since $x_0$ does not vanish at $p$, we have $I_p(x_0F,G)=I_p(F,G)+I_p(x_0,G)=I_p(F,G)$.
But $(x_0F,G)=(x_0F-G,G)$, so $I_p(x_0F-G,G)=I_p(x_0F,G)$.
But $x_0F-G=x_2^2(x_2-x_0)$ so $I_p(x_0F-G,G)=I_p(x_2^2(x_2-x_0),G)
=I_p(x_2^2,G)+I_p(x_2-x_0,G)=2I_p(x_2,G)+0=2\cdot1$.

To compute $\sum_{p\in\pr2}I_p(F,G)$, it's enough to consider only those points $p\in\pr2$
where both $F$ and $G$ vanish; i.e.,
$\sum_{p\in\pr2}I_p(F,G)=I_{(1,0,0)}(F,G)+I_{(0,1,0)}(F,G)+I_{(1,1,1)}(F,G)$.
We just found $I_{(1,0,0)}(F,G)=2$. Similarly, we find $I_{(0,1,0)}(F,G)=3$.
At $p=(1,1,1)$, the tangent to $F$ at $p$ is $x_1-2x_2$ and the tangent to $G$ at $p$
is $x_1-3x_2$. These are different, so $I_{(1,1,1)}(F,G)=\mult_p(F)\mult_p(G)=1$,
hence $\sum_{p\in\pr2}I_p(F,G)=6=\deg(F)\deg(G)$.

\vskip\baselineskip\noindent {\bf Solution \ref{intmultEx3}.} 
See \cite[Example 4.2.3]{refCHT} or \cite[Lemma 8.4.7]{refB. et al}.

\vskip\baselineskip\noindent {\bf Solution \ref{gammafor4pts}.} 
Consider $I^{(m)}=I(m(p_1+p_2+p_3+p_4))$.
Suppose $0\neq F\in (I^{(m)})_{2m-1}$.
Note that $F$ vanishes to order at least $m$ at each of two points
on any line $L_{ij}$ through two of the points $p_i, p_j$.
Since $2m>2m-1$, this means by B\'ezout  that the linear forms (also denoted $L_{ij}$)
defining the lines are factors of
$F$. Dividing $F$ by $B=L_{12}L_{13}L_{14}L_{23}L_{24}L_{34}$ we obtain a form $G$ of degree $2m-7$
in $(I^{(m-3)})_{2m-7}$. The same argument applies: $B$ must divide $G$.
Eventually we obtain a form of degree less than 6 divisible by $B$, which is impossible.
Thus $F=0$, and $\alpha(I^{(m)})> 2m-1$. Since $(L_{12}L_{34})^m\in I^{(m)}$,
we see that $\alpha(I^{(m)})\leq 2m$, thus $\alpha(I^{(m)})=2m$.

\vskip\baselineskip\noindent {\bf Solution \ref{gammafor5pts}.} 
Since $H_I(2)\geq \binom{2+2}{2}-5=1$, there is a nonzero form $F\in I_2$,
hence $\gamma(I)\leq \alpha(I)/1=2$.
If $F$ were reducible, it would be a product of two linear forms, and hence
three of the points would be colinear. Thus $F$ is irreducible.
Now let $0\neq G\in I^{(m)}_{2m-1}$. By B\'ezout, $F$ and $G$ have a common factor,
but $F$ is irreducible, so $F|G$; say $FB=G$, hence $B\in  I^{(m-1)}_{2(m-1)-1}$.
Again we see that $F|B$, etc. Eventually we find that $F$ divides a 
form of degree less than 2, which is impossible. 
Thus $I^{(m)}_{2m-1}=0$, so $\alpha(I^{(m)})\geq 2m$, so $\gamma(I)\geq 2m/m=2$.

\vskip\baselineskip\noindent {\bf Solution \ref{6ptsNotOnaConic}.} 
Pick 5 points $p_1,\ldots,p_5$ on an irreducible conic $C$, defined by an irreducible form $F$.
Note that no three of these five points are colinear (else the line through the three is a component
of the conic, which can't happen since the conic is irreducible).
Pick any point $p_6$ not on $C$ and not on any line through any two of the other points. 
If there were a nonzero form $G$ of degree 2 such that
$G$ vanished at all six points, then $F|G$ by B\'ezout, hence $G$ is a constant times $F$, so
$F$ would also have to vanish at $p_6$.

\vskip\baselineskip\noindent {\bf Solution \ref{gammafor6pts}.} 
By Proposition \ref{valsofgammaProp} we know $\gamma(I)\leq 12/5$.
As in the solution to Exercise \ref{gammafor5pts}, there is an irreducible
form of degree 2 which vanishes at any five of the six points, and by hypothesis
each such form does not vanish at the sixth point. Let $F_i$ be the degree 2 form 
that vanishes at all of the points but $p_i$. Thus $F=F_1\cdots F_6\in (I^{(5)})_{12}$.
Say $0\neq G\in (I^{(5m)})_{12m-1}$. Then B\'ezout implies that
each $F_i$ divides $G$, hence $F|G$, so $B=G/F\in (I^{(5(m-1))})_{12(m-1)-1}$.
The argument can be repeated, and eventually we find that $F$ divides a form of degree less than
the degree of $F$. Hence $(I^{(5m)})_{12m-1}=0$, so $\alpha(I^{(5m)})\geq 12m$, so
$\gamma(I)\geq12/5$.

\vskip\baselineskip\noindent {\bf Solution \ref{7generalpts}.} 
Pick 6 points $p_1,\ldots,p_6$ which do not all lie on any conic, and no three of which are colinear.
Any conic through any five of the points is irreducible, otherwise there is a line through
3 or more of the points. There is also at most one conic through any given five of the 
points, by B\'ezout's Theorem. (Alternatively, if there are two conics through the same five points, some linear combination of the forms defining
the conics would give a form vanishing at all 6 points.)
Now pick any seventh point $p_7$ not on any conic through 5 of the points $p_1,\ldots,p_6$
and not on any line through any two of the points $p_1,\ldots,p_6$.
Clearly, no three of the points $p_1,\ldots,p_7$ can be colinear (since no three of
$p_1,\ldots,p_6$ are and since $p_7$ is not on any of the lines through 
two of the points $p_1,\ldots,p_6$), and by the same argument
no six of the points $p_1,\ldots,p_7$ can be contained in any conic.

\vskip\baselineskip\noindent {\bf Solution \ref{gammafor7pts}.} 
By Proposition \ref{valsofgammaProp} we know $\gamma(I)\leq 21/8$.
Since $\binom{3+2}{2}-\binom{2+1}{2}-6\binom{1+1}{2}>0$, there is for each $i$ a form $F_i$
of degree 3 that vanishes at each point $p_j$ but has multiplicity at least 2 at $p_i$.
If $F_i$ were reducible, it would either consist of a line and a conic
with $p_i$ at a point where the two meet, and then the remaining six points would
have to be put on the line or the conic, so either the line would have 3 or the conic would have 6,
contrary to hypothesis, or $F_i$ would consist of three lines, with one point where two of the lines meet,
and the other six points placed elsewhere on the three lines, but then one of the lines would have to
contain at least three of the points. Since $F_i$ is irreducible, it must have multiplicity exactly
2 at $p_i$ and 1 at the other points, 
otherwise by B\'ezout the line through $p_i$ and any $p_j$, $j\neq i$, would be a component.

Note $F=F_1\cdots F_7\in (I^{(8)})_{21}$. As usual, if there is a $G$ with $0\neq G\in (I^{(8m)})_{21m-1}$
we get a contradiction by repeated applications of B\'ezout. 
Thus $\alpha(I^{(8m)})\geq 21m$, hence $\gamma(I)\geq 21/8$.

\vskip\baselineskip\noindent {\bf Solution \ref{9generalpts}.} 
Clearly, $\alpha(I^{(m)})\leq 3m$, since $F^m\in(I^{(m)})_{3m}$, where $F$ is the cubic form defining $C$.
If $0\neq G\in (I^{(m)})_{3m-1}$, then $F|G$ by B\'ezout, and we get $B\in (I^{(m-1)})_{3(m-1)-1}$.
Repeating this argument we eventually get a form of degree less than 
that of $F$ which $F$ divides.
Hence $(I^{(m)})_{3m-1}=0$, so $\alpha(I^{(m)})\geq 3m$, so $\gamma(I)=3$.

\vskip\baselineskip\noindent{\bf 8. Divisors, global sections, the divisor class group and fat points}

\vskip\baselineskip\noindent {\bf Solution \ref{isometry}.} 
It is enough to check this for $w=s_i$ for all $i$, for the generators 
$s_i$ of $W_r$ given above.
But $s_i(x)\cdot s_i(y)=(x+(x\cdot n_i)n_i)\cdot (y+(y\cdot n_i)n_i)
= x\cdot y+2(y\cdot n_i)(x\cdot n_i)+(x\cdot n_i)(y\cdot n_i)(n_i\cdot n_i)
= x\cdot y$, and $n_i\cdot K_X=0$ for all $i$, so 
$s_i(K_X)=K_X+(K_X\cdot n_i)n_i=K_X$.

\vskip\baselineskip\noindent {\bf Solution \ref{RRexercise}.} 
We have 
$$\frac{F^2-K_X\cdot F}{2}+1=(t^2-\sum_im_i^2+3t-\sum_im_i)/2+1=(t^2+3t+2)/2-\sum_i(m_i^2+m_i)/2$$
which is just $\binom{t+2}{2}-\sum_i\binom{m_i+1}{2}.$
The rest is clear.

\vskip\baselineskip\noindent {\bf Solution \ref{homaloidalclass}.} 
The class $16e_0-6e_1-\cdots-6e_8$ reduces by $W_8$ to
$2e_0-6e_1-2e_2$, but a conic can't vanish to order more than 2 at a point.
Thus $\dim I(6p_1+\cdots+6p_8)_{16}=\dim I(3p_1+p_2)_1=0$, hence 
$\alpha(I(6p_1+\cdots+6p_8))\geq 17$.
However, $H_{I^{(6)}}(17)\geq \binom{19}{2}-8\binom{7}{2}=3>0$,
thus $\alpha(I(6p_1+\cdots+6p_8))\leq17$, hence $\alpha(I(6p_1+\cdots+6p_8))=17$.
Alternatively, $17e_0-6e_1-\cdots-6e_8$ reduces by $W_8$ to $e_0$,
hence $\dim I(6p_1+\cdots+6p_8)_{17}=\dim I(0)_1=\dim R_1=3$,
and we achieve the same conclusion.

\vskip\baselineskip\noindent {\bf Solution \ref{exceptionalclass}.} 
We have $C^2=e_1^2=-1=e_1\cdot K_X=C\cdot K_X$. Since $E_1$ is a smooth
rational curve, so is $C$.
Moreover, $((mC)^2-K_X\cdot (mC))/2+1=(-m^2+m+2)/2\leq0$ for all $m\geq 2$.

\vskip\baselineskip\noindent {\bf Solution \ref{fcexercise}.} 
It follows from B\'ezout's Theorem that $h^0(X,\OO_X(D))=h^0(X,\OO_X(F))$.
But $C\cdot F=0$, so $D^2=(F+mC)^2=F^2-m^2$ and 
$-K_X\cdot D=-K_X\cdot (F+mC)=-K_X\cdot F + m$,
so $(D^2-K_X\cdot D)/2=(F^2-K_X\cdot F)/2-(m^2-m)/2<(F^2-K_X\cdot F)/2$.
Thus $h^0(X,\OO_X(D))=h^0(X,\OO_X(F))\geq (F^2-K_X\cdot F)/2+1>(D^2-K_X\cdot D)/2+1$.

\vskip\baselineskip\noindent{\bf 9. The SHGH Conjecture}

\vskip\baselineskip\noindent {\bf Solution \ref{HilbFuncExercise}.} 
For $t<29$, the class of $D=tE_0-12E_1-12E_2-10E_3-\cdots-10E_8$ reduces via $W_8$ to
a divisor class where $e_0$ has a negative coefficient, so $H_I(t)=0$ for $t\leq 28$.
For $t=29$ we get $D'=E_0-2E_8$, so $H_I(29)=h^0(X,\OO_X(E_0))=3$.
For $t>29$, $D\cdot C\geq 0$ for all exceptional $C$, so 
$H_I(t)=\max(0,(D^2-K_X\cdot D)/2+1)$ and $(D^2-K_X\cdot D)/2+1$ turns out to
be positive so we have $H_I(t)=(D^2-K_X\cdot D)/2+1=\binom{t+2}{2}-2\binom{12+1}{2}-6\binom{10+1}{2}$
for all $t\geq30$.

\end{document}